\documentclass{amsart}
\usepackage{lipsum}
\usepackage[dvips]{graphicx}
\usepackage{amssymb,latexsym}
\usepackage[ansinew]{inputenc}
\usepackage{mathrsfs}
\usepackage{psfrag}
\usepackage[all]{xy}
\usepackage{color}
\usepackage{graphicx,caption}
\usepackage{subfig}
\usepackage{hyperref}

\usepackage{comment}
\usepackage{amsfonts}
\usepackage{float}
\usepackage{amsmath,amssymb}
\usepackage{lscape}
\usepackage{tikz}
\usepackage{tikz-3dplot} 
\usetikzlibrary{arrows.meta,decorations.pathmorphing,patterns}

\newtheorem{corollary}{Corollary}[section]
\newtheorem{proposition}{Proposition}[section]
\newtheorem{theorem}{Theorem}

\newtheorem{definition}{Definition}[section]
\newtheorem{remark}{Remark}[section]

\newcommand{\R}{\mathbb{R}}

\def\qed{\hfill $\Box$}

\def\t{\noindent}

\def\b0{\mbox{\boldmath $0$}}

\begin{document}

\title{ON THE CONTACT OF SURFACES IN $4$-SPACE WITH 2-PLANES AND THEIR APPARENT CONTOURS}
\author{JORGE LUIZ DEOLINDO-SILVA and MOSTAFA SALARINOGHABI}

\subjclass[2020]{Primary 53A07; Secondary 58K05, 57R45}

\keywords{singularities of smooth maps, differential geometry, surfaces in 4-space, contact with planes, apparent contour}

\begin{abstract} 
We investigate the local geometry of generic smooth surfaces in $\mathbb R^4$ via the contact with $2$-planes and the associated apparent contour.  We study the $\mathcal A$-singularities of parallel projections of such surfaces along planes to transverse planes. When the projection plane does not contain an asymptotic direction, we show that, at hyperbolic or elliptic (respectively, parabolic) points, there exist up to ten (respectively, seven) tangent directions determining planes along which the projection exhibits $\mathcal A$-singularities of type butterfly or worse. Moreover, we prove that the locus of points where the discriminant of the equation defining these directions vanishes generically forms regular curves in the hyperbolic and elliptic regions, and isolated points in the parabolic set of $M$.
When the projection plane contains an asymptotic direction, we establish connections between the singularities of parallel projections, orthogonal projections to hyperplanes, and height functions. We further study the apparent contour associated with the parallel projection. When the projection is a fold, we prove a Koenderink-type theorem relating the Lipschitz--Killing curvature of the surface to the curvature of the apparent contour. Moreover, we show that elliptic or inflections points of the surface give rise to vertices of the apparent contour, whereas hyperbolic and parabolic points give rise to inflections. These phenomena are then characterized in terms of the singularities of the projection. In the non-fold case, we show that the apparent contour admits singularities of type $(t^k,t^\ell)$-cusp associated with the singularities of the projection.
\end{abstract}

\maketitle

\section{Introduction}
 
Singularity theory has proven to be a powerful framework for studying smooth surfaces in 4-space. The contact of a surface in $\mathbb{R}^4$ with flat objects (such as hyperplanes, planes, and lines) reveals important aspects of its geometry (see the recent book \cite[Chapter 7]{IFRT-book} for references). For instance, while the contact with lines and hyperplanes yields robust information at hyperbolic and parabolic points, the contact with $2$-planes can provide characterizations both at hyperbolic and parabolic points, as well as at elliptic points. More recently, the contact of special classes of surfaces in $\mathbb{R}^4$ (such as Riemann and ruled surfaces) with 2-planes has been investigated within the affine structure of $\mathbb{R}^4$, and this approach has proven to be a valuable tool for extracting geometric information, including at elliptic points (see \cite{DT, DeolindoRuled}). However, these perspectives have been only partially explored in the context of generic surfaces in $\mathbb{R}^4$. Our aim in this paper is to investigate the local geometry of generic smooth surfaces in $\mathbb{R}^4$ associated with contact with $2$-planes, within the affine structure of $\mathbb R^4$.

The contact of surfaces with $2$-planes was first investigated in \cite{Nogueira}, with a particular focus on the relationship between the bifurcation sets of projections onto planes and height functions. However, that approach did not lead to significant geometric consequences for the surface. The contact between a surface $M$ and $2$-planes is captured by the singularities of parallel projections $P_\pi$ of $M$ along planes $\pi$ onto transverse planes.

In this paper, we characterize the $\mathcal A$-singularities of the parallel projections $P_\pi$ and describe robust geometric features of the surface $M$. We show that, when $\pi$ does not contain an asymptotic direction, there exist up to ten tangent directions at elliptic or hyperbolic points (respectively, up to seven tangent directions at parabolic points) determining planes along which the projection exhibits butterfly-type $\mathcal A$-singularities or worse. Moreover, we prove that the locus of points where the discriminant of the equation defining such directions vanishes generically forms regular curves at elliptic and hyperbolic points, and isolated special points at parabolic points.

When $\pi$ contains an asymptotic direction ${\bf u}$, we describe geometric information about $M$ which is related both to the singularities of the orthogonal projections to hyperplanes $P_{{\bf u}}$ along ${\bf u}$ and to the singularities of the height function $h_{\bf w}$ in the binormal direction ${\bf w}$ associated with ${\bf u}$. For instance, we show that a singularity of Type $11_5$ of $P_\pi$ occurs generically along a curve on $M$, called the {\it $11_5$-curve}, which coincides with the locus of singularities of Type $B_2$ of the orthogonal projection $P_{\bf u}$ (the {\it $B_2$-curve}). Furthermore, singularities of Type $4_4$ of $P_\pi$ occur generically along a curve on $M$, called the {\it $4_4$-curve}, which intersects the $S_2$-curve transversally at an $S_3$-singularity of $P_{\bf u}$. Moreover, singularities of Type $18$ and $19$ of $P_\pi$ occur at isolated points on this curve and correspond to the $A_4$-singularities of $h_{\bf w}$ and the $C_3$-singularities of $P_{\bf u}$, respectively.

Finally, we define the apparent contour of the surface $M$ along the plane $\pi$ as the image of the singular set $\Sigma_\pi$ of the projection under the map $P_\pi$. When the singular set is a regular curve, the projection $P_\pi$ may be $\mathcal A$-equivalent either to a fold or to the germ $(x,xy+y^k)$ $k\geq3$. In the first case, we prove a Koenderink-type theorem establishing that the Lipschitz--Killing curvature of $M$ coincides with the product of the curvature of the apparent contour and the curvature of the normal section along a non-asymptotic direction. We further show that, at elliptic or inflection points of $M$, the apparent contour exhibits vertices, whereas at hyperbolic or parabolic points of $M$ it exhibits inflections. 
In particular, at elliptic points, vertices of order $4$ occur generically along regular curves on the surface $M$, while vertices of order $5$ appear at isolated points on these curves. On the other hand, if $p$ is a hyperbolic point, inflections of order $2$ occur along the $11_5$-curve, whereas inflections of order $3$ arise at points where the projection $P_\pi$ has a singularity of Type $18$ on this curve. In the second case, we prove that the apparent contour admits singularities of type $(t^k,t^\ell)$-cusp, directly associated with the singularities of type $(x,xy+y^k)$ $k\geq3$ of the projection.

The paper is organized as follows. In Section \ref{section2}, we recall some results on the flat geometry of surfaces in $\mathbb R^4$. In Section \ref{sec:singprojc}, we define parallel projections and prove geometric characterizations of the surface $M$ in terms of the $\mathcal A$-singularities of the parallel projection $P_\pi$. Finally, in Section \ref{Apparent contour}, we introduce the apparent contour and study its inflections, vertices, and singularities in relation to the singularities of $P_\pi$.

\section{Preliminary}\label{section2}

We give a brief review and establish some notation concerning a smooth surface $M$ in $\mathbb R^4$. The differential geometry of generic surfaces in $\mathbb{R}^4$ has been extensively studied (see for example \cite{bruce-nogueira,BruceTari,DK,Ronaldoetal,little,dmm,projsrfR4,OsetTari}). Let $M$ be a regular surface in the Euclidean space $\mathbb R^4$. Consider a point $p\in M$ and a unit circle in $T_pM$ parametrized by $\theta \in [0,2\pi]$. The \emph{curvature ellipse} $\eta(\theta)$ in the normal plane $N_pM$, is the image of this unit circle described by a pair of quadratic forms $(Q_1, Q_2)$ (\cite{little}). Points on the surface are classified according to the position of the point $p$ with
respect to the ellipse ($N_pM$ is viewed as an affine plane through $p$). The point $p$ is
called \emph{elliptic/parabolic/hyperbolic} if it is inside/on/outside the ellipse at $p$, respectively.

The pair of quadratic forms is the $2$-jet of the
1-flat map $\mathcal F:{({\mathbb R}}^2,0)\to {{(\mathbb R}}^2,0)$ (i.e. without constant or linear terms) whose graph,
in orthogonal coordinates, is locally the surface $M$. Using a different approach to the geometry of surfaces in ${{\mathbb R}}^4$ given in \cite{bruce-nogueira}, each point on the surface determines a pair of quadratics:
$$ 
(Q_1,Q_2)=(ax^2+2bxy+cy^2,lx^2+2mxy+ny^2). 
$$

Representing a binary form $Ax^2+2Bxy+Cy^2$ by its coefficients $(A:B:C)\in {\mathbb R}P^2$, there
is a cone $\Gamma: B^2-AC=0$ representing the perfect squares.
If the forms $Q_1$ and $Q_2$ are linearly
independent, then they determine a line in the projective plane ${\mathbb R} P^2$ and the cone is a
conic. This line meets the conic $\Gamma$ in 0, 1 or 2 points according as
$\delta(p)<0,=0,>0$ where \[ \delta(p)=(an-cl)^2-4(am-bl)(bn-cm).\]
Thus a point $p$ is {elliptic/parabolic/hyperbolic} if $\delta(p)<0/=0/>0$. The set of points in $M$
 where $\delta=0$ is called the \emph{parabolic set} of $M$ and is denoted by $\Delta$.
If $Q_1$ and $Q_2$ are dependent at a point $p$, then the point is called \emph{inflection point}.


Consider the action of the group ${\mathcal G} = GL(2, \mathbb{R}) \times GL(2, \mathbb{R})$ on pairs of binary forms. The classification of these ${\mathcal G}$-orbits is presented in Table \ref{Gibson} (see, e.g., \cite{gibson} for further details):

\begin{table}[h]
\caption{$GL(2,{\mathbb R})\times GL(2,{\mathbb R})$-classes of pairs of binary forms. } \label{Gibson}
\begin{tabular}{cc}
\hline
Normal form & Name\\
\hline
$(x^2,y^2)$ & \mbox{hyperbolic point} \\
$(xy,x^2-y^2)$ & \mbox{elliptic point}\\
$(xy,y^2)$ &\mbox{parabolic point}\\
$(x^2+ y^2,0)$ or $(xy,0)$ &\mbox{inflection point}\\
$(x^2,0)$ &\mbox{degenerate  inflection point}\\
$(0,0)$ &\mbox{degenerate inflection point}\\
\hline
\end{tabular}

\end{table}

The geometrical characterization of points on $M$ using singularity theory is carried out in \cite{dmm} via the family of height functions
$h : M\times S^3 \to {{\mathbb R}} $ given by $h(p,v)=\langle p,v\rangle$,
 where $S^3$ denotes the unit sphere in ${\mathbb R}^4$. For $v$ fixed, the height function $h_v(p)=h(p,v)$ is singular if and only
if $v\in N_pM$. It is shown in \cite{dmm} that elliptic points are non-degenerate critical points of
$h_v$ for any $v\in N_pM$. At a hyperbolic point, there are exactly two directions in $N_pM$, labelled \emph{binormal directions},
such that $p$ is a degenerate critical point of the corresponding height functions. The two binormal directions coincide at a parabolic point.
The binormal directions ${\bf w}=(\lambda_1,\lambda_2)\in N_pM$ can be determined by the zeros of 
\begin{equation}\label{binormaleq}
\mathcal B({\bf w})=(ac-b^2)\lambda_1^2 +(an+cl-2bm)\lambda_1\lambda_2+(ln-m^2)\lambda_2^2,
\end{equation}

The direction of the kernel of the Hessian of the height functions along a binormal
direction is an \emph{asymptotic direction} associated to the given binormal direction (\cite{dmm}).
The asymptotic directions are labelled by conjugate directions in \cite{little}, and are defined as the directions along $\theta$ such that the curvature vector $\eta(\theta)$ is tangent to the curvature
ellipse (see also (\cite{Ronaldoetal, dmm})). This is invariant under the action of $GL(2,\mathbb R)$ in the normal plane, so the concept of asymptotic direction is affine invariant. 
The asymptotic directions ${\bf u}=(dx,dy)\in T_pM$ can be determined by the zeros of 
\begin{equation}\label{asymp}
\Omega({\bf u})=(am-bl)dx^2 +(an-cl)dxdy+(bn-cm)dy^2,
\end{equation}
and the discriminant of the $\Omega=0$ is given by the zero set of the function $\delta(p)$ and coincides with the parabolic set of the surface (\cite{BruceTari}). 
So if $p$ is not an inflection point, then there are $2/1/0$ asymptotic directions at $p$ depending on $p$ being a hyperbolic/parabolic/elliptic point. If $p$ is an inflection point, then every direction in
$T_pM$ is asymptotic. 

Asymptotic directions can also be described via the singularities
of the orthogonal projections of $M$ to hyperplanes (see \cite{bruce-nogueira}). The family of orthogonal projections is given by

$$
\begin{array}{rcl}
P : M\times S^3& \to& TS^3 \\
(p,{\bf u})&\mapsto&({\bf u},p-\langle p,{\bf u}\rangle {\bf u}).
\end{array}
$$
For ${\bf u}$ fixed, the projection can be viewed locally at a point $p\in M$ as a map germ $P_{\bf u}:{(\mathbb R}^2,0)\to
({\mathbb R}^3,0)$. If we allow smooth changes of coordinates in the source and target
(i.e., consider the action of the Mather group $\mathcal{A}$) then the generic $\mathcal A$-singularities of $P_{\bf u}$
are those that have $\mathcal A$-codimension less than or equal to 3 (which is the dimension of $S^3$). These are listed in Table \ref{mond} (see  \cite{mond}).
\begin{table}[htp]
\begin{center}
\begin{tabular}{c|c|c}
\hline
Name & Normal form & ${\mathcal A}_e$-codimension\cr
\hline
\hline
Immersion & $(x,y,0)$ &0\cr
Cross-cap & $(x,y^2,xy)$ &0\cr
$B^{\pm}_{k}$ & $(x, y^2, x^2y \pm y^{2k+1}), k=2,3$ & $k$ \cr
$S^{\pm}_{k}$ & $(x, y^2, y^3 \pm x^{k+1}y), k=1,2,3$ & $k$\cr
$C^{\pm}_{k}$ & $(x, y^2, xy^3 \pm x^{k}y), k=3$ & $k$\cr
$H_k$ & $(x,xy+y^{2k+2},y^3), k=2,3$ & $k$\cr
$P_3(c)$ & $(x, xy + y^3, xy^2 + cy^4),\;c\neq0,\frac{1}{2},1,\frac{3}{2}$ & $3^*$\cr
\hline
\end{tabular}

$(*)$: the codimension in brackets is that is of the $\mathcal A$-stratum.
    \caption{The generic local singularities of projections of $M$ to hyperplanes.}\label{mond}
    
\end{center}
\end{table}

The projection $P_{\bf u}$ is singular at $p$ if and only if ${\bf u}\in T_pM$. The singularity is a cross-cap unless ${\bf u}$ is an asymptotic direction at $p$. As $P$ has  3-parameters, the ${\mathcal A}_e$-codimension $2$ singularities occur generically on curves on the surface, and the ${\mathcal A}_e$-codimension $3$ ones occur at special points on these curves. The closure of the set of points $p$ on $M$ for which there exists a projection $P_{\bf u}$ having an $S_2$ (resp. $B_2$, $H_2$)-singularity at $p$ is called by $S_2$-curve (resp. $B_2$-curve and $H_2$-curve). The $H_2$-curve coincides with the parabolic set (\cite{bruce-nogueira}). The $B_2$-curve of $P_{\bf u}$, with ${\bf u}$ asymptotic, is also the $A_3$-set of the height functions along the binormal direction associated to ${\bf u}$ (\cite{bruce-nogueira}). This curve meets the parabolic set tangentially at a $P_3(c)$-singularity (it is called $P_3(c)$-point in \cite{Deolindo}) of the projection along the unique asymptotic direction (\cite{BruceTari}) and intersects
the $S_2$-curve transversally at a $C_3$-singularity (see Figure \ref{fig:SpecialC} for their configurations). At inflection points, the parabolic set has a Morse singularity. For more details on inflection points, see \cite{bruce-nogueira,Ronaldoetal}.

\begin{figure}[htb]
\includegraphics[ width=7.5cm, height=4cm]{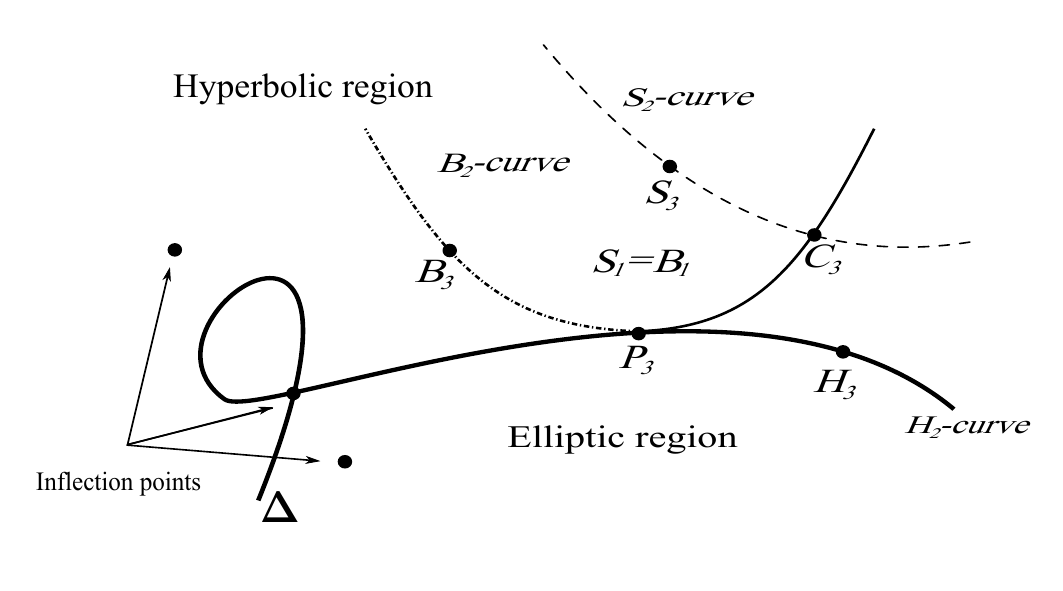}
\caption{Curves and special  points on $M$ (\cite{dmm,projsrfR4}).}
\label{fig:SpecialC}
\end{figure}

Let ${\bf E = \{e_1, e_2, e_3, e_4\}}$ be the standard basis of $\mathbb{R}^4$, and let $M$ be parametrised in Monge form by
\[
F(x,y)=(x,y,f^1(x,y),f^2(x,y)),
\]
where
\begin{equation}\label{Monge} \displaystyle \begin{array}{l} \displaystyle f^1(x,y)= \sum_{i+j\ge 2} {a}_{ij}x^i y^j, \;\; \displaystyle f^2(x,y)=\sum_{i+j\ge 2} {b}_{ij}x^i y^j. \end{array} \end{equation}
Then, at the origin $p=(0,0)$, the tangent space $T_pM$ is generated by ${\bf e}_1$ and ${\bf e}_2$. The coefficients of $(Q_1,Q_2)$ at $p$ are given by
\[
a=\tfrac12 f^1_{xx}, \quad b=\tfrac12 f^1_{xy}, \quad c=\tfrac12 f^1_{yy}, \quad
l=\tfrac12 f^2_{xx}, \quad m=\tfrac12 f^2_{xy}, \quad n=\tfrac12 f^2_{yy}.
\]
Thus, from Equation \eqref{asymp}, a vector ${\bf u}=(\alpha_1,\alpha_2,0,0)$ represents an asymptotic direction at $p$ if and only if
\begin{equation}\label{asymptoticlines}
\Omega({\bf u})=(a_{02}b_{11}-a_{11}b_{02})\,\alpha_2^2
+2(a_{02}b_{20}-a_{20}b_{02})\,\alpha_1\,\alpha_2
+(a_{11}b_{20}-a_{20}b_{11})\,\alpha_1^2
\end{equation}
vanishes. Moreover, the height function $h_{\bf w}$ is degenerate at $p$ in the normal direction ${\bf w}=(0,0,s,t)$ if and only if
\begin{equation}\label{binormallines}
\mathcal{B}({\bf w})=(4a_{02}a_{20}-a_{11}^2)s^2
+\big(4(a_{20}b_{02}+a_{02}b_{20})-2a_{11}b_{11}\big)st
+(4b_{02}b_{20}-b_{11}^2)t^2
\end{equation}
vanishes. In this case, ${\bf w}$ is a binormal direction at $p$.

\section{Parallel projections along planes}\label{sec:singprojc}

We are interested in the affine geometry of smooth surfaces using contact with 2-planes. The geometric characterization of points on a generic surface $M$ in 4-space obtained using pairs of quadratic forms can be recovered by considering the contact of the surface with lines and hyperplanes (see \cite{IFRT-book}). The geometry of elliptic points remains relatively unexplored from the perspective of singularity theory. In this context, the contact with $2$-planes is particularly relevant, as it not only characterizes the geometry of hyperbolic and parabolic points, but also reveals geometric features of elliptic points.

  
A plane $\pi^*$ is the kernel of a linear submersion $\psi:\mathbb R^4\to \mathbb \pi_t$, where $\pi_t$ is a transverse plane to $\pi^*$. Following Montaldi in \cite{Montaldi}, the contact of a surface $M$ with $\pi^*$ at a given point on $M$ is captured by the $\mathcal K$-singularities of the composite map 
$\psi\circ \varphi$, with $\varphi$ a local parametrisation of $M$. The map $\psi\circ \varphi$  is locally a map-germ $(\mathbb R^2,0)\to (\mathbb R^2,0)$. We consider here the action of the group $\mathcal A$ as it gives finer results and see Rieger's  $\mathcal A$-classification for corank 1 case in Table \ref{tab:rieger} and for corank 2 case in Table \ref{corank2}.

The set of all planes through the origin in $\mathbb{R}^4$ is the Grassmannian $Gr(2,4)$, a 4-dimensional manifold. To simplify calculations, Nogueira in \cite{Nogueira} introduced a scalar product on $\mathbb{R}^4$ to obtain a local parametrization of $Gr(2,4)$ and a simple expression for the family of projections along planes. However, his analysis focused only on certain contact planes associated with asymptotic or binormal directions. In contrast, our characterization is more general, regardless of whether the plane contains an asymptotic direction. Our approach is based on the same construction introduced in \cite{DT}.


Consider two transverse planes, $\pi^*$ and $\pi_t$, in $\mathbb{R}^4$. We choose a basis vector ${\mathcal F}=\{ {\bf f_1},{\bf f_2},{\bf f_3},{\bf f_4}\}$ of $\mathbb R^4$ such that ${\bf f_1}$ and ${\bf f_2}$ generate $\pi^*$, while ${\bf f_2}$ and ${\bf f_3}$ generate $\pi_t$. Let $\pi$ be a plane 
near to $\pi^*$ that can be generated by the two column vectors ${\bf u}$ and ${\bf v} $ of a matrix 
$$
\left(
\begin{array}{cc}
1&0\\
0&1\\
\alpha_1&\beta_1\\
\alpha_2&\beta_2
\end{array}
\right)
$$
where the coordinates of the vectors are with respect to the basis $\mathcal F$. 
Therefore, we can identify the set of planes near $\pi^*$ with an open 
neighbourhood $U$ of the origin in  $\mathbb R^4$. 
The plane $\pi$ above, being close to $\pi^*$, is still transverse to $\pi_t$, so we project along $\pi$ to $\pi_t$ (see \cite{DT}) for details). 

The parallel projection of  ${\bf x}=(x_1,x_2,x_3,x_4)_{\mathcal F}\in \mathbb R^4$ to $\pi_t$ is a point 
${\bf y}={\bf x}-x_1 {\bf u}-x_2 {\bf v}\in \pi_t$. Therefore, the expression for the family of parallel projections 
$
P: \mathbb R^4\times U \to \pi_t\equiv \mathbb R^2 
$  
along planes near $\pi^*$ to the plane $\pi_t$, 
is given by
$$
P({\bf x}, \pi)=(x_3-\alpha_1x_1-\beta_1x_2,x_4-\alpha_2x_1-\beta_2x_2),
$$
with $\pi$ generated by ${\bf u},{\bf v}$ above. If we choose another basis for $\pi_t$ or another transverse plane to $\pi^*$, then the expressions for the projections are equivalent under 
multiplication by an invertible $2\times 2$ matrix, so the resulting projections are $\mathcal A$-equivalent. 

Restricting the family $P$ to a surface $M\subset \mathbb R^4$ gives  the family of parallel projections of $M$ to transverse planes. We still denote this restriction by $P$ and by $P_{\pi}:M\to \mathbb R^2$, the map 
given by $P_{\pi}(p)=P(p,\pi)$, for $p\in M$. We also denote by  
$j_1^kP(p,\pi)$, the $k$-jet of $P_{\pi}$ at $p$.  By Montaldi's theorem in \cite{Montaldi2} if
$W$ is an $\mathcal A$-invariant variety in $J^k(M,\mathbb R^2)$, then, for a residual set of immersions of a surface $M$ in $\mathbb R^4$, 
the map $j_1^kP: M\times Gr(2,4)\to 
J^k(M,\mathbb R^2)$ 
is transverse to $W$.
This means that for a generic surface $M$, the only singularities that $P_{\pi}$ can have are those 
of {$\mathcal A_e$-codimension $\le 4=\dim(Gr(2,4))$} (of the stratum, in the presence of moduli). Furthermore, these singularities are $\mathcal A_e$-versally unfolded by the family $P$.

\begin{table}
\caption{$\mathcal A$-singularities of corank 1 map-germs with $d_e(g,\mathcal A)\le 4$ (\cite{rieger}).}
\begin{center}
\begin{tabular}{l l c}
\hline
Type & Normal form & $d_e(g,\mathcal A)$ \cr
\hline
$1$ (Immersion) & $(x,y)$ &$0$\cr
$2$ (Fold) & $(x,y^2)$ &$0$\cr
$3$ (Cusp) & $(x, xy + y^{3})$ & $0$ \cr
$4_k$ (Lips/beaks for $k=2$) & $(x,y^3 \pm x^{k}y), 2\leq k\leq5$ & $k-1$\cr
$5$ (Swallowtail) & $(x, xy+ y^4)$ & $1$\cr
$6$ (Butterfly) & $(x,xy+y^{5}\pm y^7)$ & $2$\cr
$7$ & $(x, xy+ y^5)$ & $3$\cr
$8$ & $(x,xy+y^{6}\pm y^8+\alpha y^9)$ & $4(3^*)$\cr
$9$ & $(x, xy+ y^6+y^9)$ & $4$\cr
$10$ & $(x,xy+y^{7}\pm y^9+\alpha y^{10}+\beta y^{11})$ & $6(4^*)$\cr
$11_{2k+1}$ & $(x,xy^2+y^4+y^{2k+1}), 2\leq k\leq4$ & $k$\cr
$12$ & $(x,xy^2+y^{5}+ y^6)$ & $3$\cr
$13$ & $(x, xy^2+ y^5\pm y^9)$ & $4$\cr
$15$ & $(x,xy^2+y^{6}+y^7\pm \alpha y^9)$ & $5(4^*)$\cr
$16$ & $(x, x^2y+ y^4\pm y^5)$ & $3$\cr
$17$ & $(x, x^2y+ y^4)$ & $4$\cr
$18$ & $(x,x^2y+xy^3+\alpha y^{5}+y^6\pm \beta y^7)$ & $6(4^*)$\cr
$19$ & $(x,x^3y+\alpha x^2y^2+ y^{4}+x^3y^2)$ & $5(4^*)$\cr
\hline
\end{tabular}

$(*)$: the codimension in brackets is that is of the $\mathcal A$-stratum.
\end{center}
 \label{tab:rieger}
\end{table}

\begin{table}[h]
\begin{center}
\caption{$\mathcal A$-singularities of corank 2 map-germs with $d_e(g,\mathcal A)\le 4$}\label{corank2}
\begin{tabular}{l l c}
\hline
Type & Normal form & $d_e(g,\mathcal A)$ \cr
\hline
$I_{2,2}^{l,m}$  & $(x^2+y^{2l+1},y^2+x^{2m+1})$, $1\leq m\leq l\leq3$ & $l+m\leq4$\cr
$II_{2,2}^{l}$  & $(x^2-y^2+x^{2l+1},xy)$, $1\leq l\leq2$ & $2l$\cr
\hline
\end{tabular}
\end{center}
\end{table}

 It is not hard to show that $P_{\pi}$ is singular at the origin if and only if $\dim(T_pM\cap \pi)\ge 1$. 
We consider the following two cases separately: $\dim(T_pM\cap \pi)=1$ and  $\pi=T_pM$. 

\subsection{Case $\dim(T_pM\cap \pi)=1$} \label{case1}

\

As we have fixed the basis $\bf E$ and $T_pM$, we change the earlier setting for the parallel 
projection and take a pair of generators ${\bf u},{\bf v}$ of $\pi\in Gr(2,4)$ in general form.
We take, without loss of generality 
\begin{equation}
{\bf u}=(1,\alpha,0,0)\in T_pM\cap \pi
\end{equation} 
and ${\bf v}=(0,\lambda,\mu,\beta)$, with 
$\mu^2+\beta^2\ne 0$. 
We consider the parallel projection along $\pi$ to the transverse plane $\pi_t$ generated by 
${\bf e_2}$ and ${\bf f}=(1,0,-\beta,\mu)$.  
Then, 
$$
P_{\pi}({\bf x}){\sim_{\mathcal A}}\left(x_2-\alpha x_1-\frac{\lambda}{\mu^2+\beta^2} (\mu x_3+\beta x_4),\, \beta x_3-\mu x_4\right),
$$
where we scaled the last coordinate by $\mu^2+\beta^2$, and where the coordinates of $P_{\pi}({\bf x})$ are with respect to the basis $\{{\bf e_2} ,{\bf f}\}$ of $\pi_t$. 
 
 The restriction of $P_{\pi}$ to the surface $M$ given by Monge form $F$ (still denoted by $P_{\pi}$) 
is the map-germ $P_{\pi}:(\mathbb R^2,0)\to (\mathbb R^2,0)$ given by
{\small
\begin{equation}\label{eq:case1}
P_{\pi}(x,y){\sim_{\mathcal A}}\left(y-\alpha x-\frac{\lambda}{\mu^2+\beta^2} (\mu f^1(x,y)+\beta f^2(x,y)),\, \beta f^1(x,y)-\mu f^2(x,y)\right).
\end{equation}
}
After one more change of coordinates, Equation (\ref{eq:case1}) takes the form
\begin{equation}\label{Proj1}
P_{\pi}(x,y)\sim_{\mathcal A} (y,g_{(\alpha,\lambda, \beta, \mu)}(x,y)).
\end{equation} 

It is clear that $P_{\pi}$ is a corank 1 map-germ.  Then we have the following observation about projection $P_\pi$.

\begin{proposition}[\cite{Nogueira, IFRT-book}] \label{prop:class}
With notation as above, let $M$ be a regular surface and
$\pi\in Gr(2,4)$ with $\dim(T_pM\cap\pi)=1$.
Denote by $\Sigma_\pi$ the singular set of the projection $P_\pi$.

\begin{enumerate}
\item[(i)]
$\Sigma_\pi$ is a regular curve if and only if
$j^2P_\pi$ is $\mathcal A^{(2)}$-equivalent to
$(y,x^2)$ or $(y,xy)$.

\item[(ii)]
Assume that $p$ is a non-elliptic and non-inflection point.
Then
$j^2P_\pi$ is $\mathcal A^{(2)}$-equivalent to $(y,0)$
if and only if the plane $\pi$ contains an asymptotic direction
${\bf u}$ and
\[
{\bf w}=(0,0,\beta,-\mu)
\]
is the binormal direction associated with ${\bf u}$.
In particular, $\Sigma_\pi$ exhibits a Morse
$A_1^\pm$-singularity or worse at $p$.

\item[(iii)]
If $p$ is an inflection point, then
$j^2P_\pi$ is $\mathcal A^{(2)}$-equivalent to $(y,0)$
whenever
$
{\bf w}=(0,0,\beta,-\mu)
$
is the unique binormal direction at $p$ corresponding to every asymptotic direction.
\end{enumerate}
\end{proposition}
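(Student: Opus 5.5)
We work directly with the normal form \eqref{Proj1}, $P_\pi(x,y)\sim_{\mathcal A}(y,g(x,y))$. After the change of coordinates that straightens the first component $y-\alpha x-\dots$ into a new coordinate, write $g$ in the form
$$g(x,y)=\beta f^1(x,y)-\mu f^2(x,y)+\text{(higher order corrections)}.$$
Here the source coordinates are chosen so that $x$ runs along ${\bf u}=(1,\alpha,0,0)$, the kernel direction of $dP_\pi$ at $p$. The singular set is then
$$\Sigma_\pi=\{g_x=0\}.$$

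\textbf{Part (i).} Since $g$ has no linear part, $\Sigma_\pi$ is a regular curve at $0$ exactly when $dg_x(0)\neq0$, that is, when $(g_{xx}(0),g_{xy}(0))\neq(0,0)$. The $2$-jet of $g$ modulo terms in $y$ alone (which are absorbed by target changes $Y\mapsto Y-\phi(y)$) is $g_{xx}x^2/2+g_{xy}xy$. We then split into two cases:
\begin{itemize}
\item if $g_{xx}\neq0$, completing the square and rescaling gives $(y,x^2)$;
\item if $g_{xx}=0$ and $g_{xy}\neq0$, the jet is $(y,xy)$.
\end{itemize}
Conversely, each of these two $2$-jets gives $dg_x\neq0$. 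This proves (i) and shows that otherwise $j^2P_\pi\sim(y,0)$.

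\textbf{Part (ii).} Express the vanishing of $g_{xx}$ and $g_{xy}$ at $0$ in terms of the surface. Because the first component is linear in $x$ and $y$ up to order two, the coefficient conditions reduce to conditions on the quadratic form
$$Q_{\bf w}=\beta Q_1-\mu Q_2,$$
the Hessian of the height function $h_{\bf w}$ with ${\bf w}=(0,0,\beta,-\mu)$. Concretely, $g_{xx}(0)=g_{xy}(0)=0$ says that $Q_{\bf w}({\bf u},\cdot)\equiv0$, i.e.\ ${\bf u}$ lies in the kernel of $\mathrm{Hess}\,h_{\bf w}$. The $\lambda$-terms involve $f^i$ only at quadratic order multiplied against $y$, so they affect only $y$-terms and higher order.

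The geometric translation is then as follows. $\mathrm{Hess}\,h_{\bf w}$ is degenerate if and only if $\mathcal B({\bf w})=0$, i.e.\ ${\bf w}$ is binormal, and its kernel is by definition the associated asymptotic direction \cite{dmm}. At a non-elliptic, non-inflection point $Q_{\bf w}\not\equiv0$, so the kernel is exactly one line; hence the conditions are equivalent to ${\bf u}$ being asymptotic with binormal ${\bf w}$.

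For the final assertion, when $j^2g$ is a pure power $cy^2$ (or zero), $g_x$ has zero $1$-jet. So $\Sigma_\pi=\{g_x=0\}$ is singular at $0$, with $2$-jet given by $j^2g_x$, which is generically a nondegenerate quadratic form. This yields $A_1^\pm$ or worse.

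\textbf{Part (iii).} At an inflection point, $Q_1$ and $Q_2$ are dependent. Hence there is a direction ${\bf w}$ with $Q_{\bf w}\equiv0$; this is the unique binormal direction, and every tangent direction is asymptotic with it. Then $g_{xx}=g_{xy}=0$ for every ${\bf u}$, giving $j^2P_\pi\sim(y,0)$.

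\textbf{Main obstacle.} The main bookkeeping difficulty is checking carefully that the coordinate change from \eqref{eq:case1} to \eqref{Proj1} introduces only terms that do not alter $g_{xx}(0)$ and $g_{xy}(0)$ beyond the $Q_{\bf w}$ contribution. This holds because the correction is $(\text{linear})\cdot(\text{quadratic})$, hence cubic. Everything else is linear algebra of binary forms.
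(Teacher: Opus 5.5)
Your proposal is correct and follows essentially the same route as the paper: both reduce everything to the linear part of $g_x$ (that is, to $g_{xx}(0)$ and $g_{xy}(0)$) and split into the three $\mathcal A^{(2)}$-classes $(y,x^2)$, $(y,xy)$, $(y,0)$. The one difference is in (ii): the paper matches explicit coefficients against $\Omega(\mathbf u)$ and $\mathcal B(\mathbf w)$, whereas you identify the vanishing conditions intrinsically as $\mathbf u\in\ker(\beta Q_1-\mu Q_2)=\ker \mathrm{Hess}\,h_{\mathbf w}$, which directly encodes that $\mathbf w$ is the binormal \emph{associated} with $\mathbf u$ (the pair of conditions $\Omega(\mathbf u)=\mathcal B(\mathbf w)=0$ alone does not give this at a hyperbolic point).
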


\begin{proof} 
 Consider the projection $P_{\pi}$ given in \eqref{Proj1}. We shall write $g = g_{(\alpha,\lambda,\beta,\mu)}$, then the singular set $\Sigma_\pi$ of $P_{\pi}$  is the zero set of the function $g_x(x,y)=0$.
The result follows by considering the relevant jet of $g_x$ at the origin. We have 
{\small
\begin{eqnarray*}
\frac12j^1g_x(x,y)=\Big((\alpha b_{11}+2b_{20})\mu-(a_{11}\alpha+2a_{20})\beta\Big)x
+\Big((b_{11}+2\alpha b_{02})\mu-(2a_{02}\alpha+a_{11})\beta\Big)y.
\end{eqnarray*}
}
\noindent
{\normalfont(i)} $P_{\pi}$ is a fold (Type 2) if and only if  $(\alpha b_{11}+2b_{20})\mu-(a_{11}\alpha+2a_{20})\beta\neq 0$. Moreover $j^2P_\pi$ is $\mathcal A^{(2)}$-equivalent to $(y,xy)$ if and only if $(\alpha b_{11}+2b_{20})\mu-(a_{11}\alpha+2a_{20})\beta= 0$ and $(b_{11}+2\alpha b_{02})\mu-(2a_{02}\alpha+a_{11})\beta\neq0$. In both cases, $\Sigma_\pi$ is a regular curve at $p$.

\noindent
{\normalfont(ii)} Now $j^1g_x$ is identically zero if and only if 
$$(\alpha b_{11}+2b_{20})\mu-(a_{11}\alpha+2a_{20})\beta= (b_{11}+2\alpha b_{02})\mu-(2a_{02}\alpha+a_{11})\beta=0.$$
From Equations (\ref{asymptoticlines}) and (\ref{binormallines}) it is equivalent to $\Omega({\bf u})=0$ and $\mathcal B({\bf w})=0$, where ${\bf u}=(1,\alpha,0,0)\in T_pM\cap \pi$ and ${\bf w} = (0, 0, \beta,-\mu)\in N_pM$. 
If $p$ is not an inflection point, then these solutions characterize ${\bf u}$ as an asymptotic direction 
and ${\bf w}$ as the binormal direction associated to ${\bf u}.$
Since elliptic points admit neither asymptotic nor binormal directions, then at a non-elliptic point there exist planes $\pi$ containing asymptotic directions ${\bf u}$ 
for which $j^2P_{\pi}$ is $\mathcal{A}^{(2)}$-equivalent to $(y,0)$. In particular, $g_x$ has a Morse $A_1^\pm$-singularity  if and only if the discriminant of the quadratic terms of $g_x$ is nonzero.

\noindent
{\normalfont(iii)} At an inflection point, every tangent direction is asymptotic and the asymptotic-binormal correspondence degenerates. Therefore, whenever the projection plane $\pi$ contains an asymptotic direction ${\bf u}$ and a binormal direction ${\bf w}$, the conditions above imply that $j^1g_x$ vanishes identically. Hence $\Sigma_\pi$ is singular at $p$.

\qed
\end{proof}

\begin{remark}\label{rem:class}
{\normalfont
\begin{itemize}
\item[(i)] Note that, at elliptic points, $j^2P_{\pi}$ necessarily reduces, up to $\mathcal{A}^{(2)}$-equivalence, to either $(y, x^2)$ or $(y, xy)$. At parabolic, hyperbolic, or inflection points, we also obtain these singularities, together with those whose 2-jet is equivalent to $(y,0)$.
\item[(ii)] Furthermore, at non-elliptic points if ${\bf u}$ is an asymptotic direction, the projection $j^2P_{\pi}$ is $\mathcal{A}^{(2)}$-equivalence $(y,0)$ only when ${\bf w}$ is a binormal direction associated to ${\bf u}$. 
\end{itemize}
}
\end{remark}

According to Proposition \ref{prop:class} and Remark \ref{rem:class}, we must consider three possible $\mathcal{A}^{(2)}$-equivalence classes for the 2-jet of the projection $P_\pi$, namely $(y, x^2)$, $(y, xy)$, and $(y,0)$. 
 Therefore, to describe the local behavior of the projection near a generic point, we must analyze the elliptic, parabolic, hyperbolic, and inflection cases separately, taking into account the relation between asymptotic and binormal directions at the non-elliptic ones. We distinguish two cases in the analysis: ${\bf u}$ as a non-asymptotic direction in $\pi$, and ${\bf u}$ as an asymptotic direction.

\subsubsection{${\bf u}$ is a non-asymptotic direction in $\pi$}\label{ParabolicSec} 

In this case, we analyze the projection along $\pi$ when $p$ is an elliptic, parabolic, or hyperbolic point. Since ${\bf u}$ is not an asymptotic direction, the inflection point is not considered here. 
%

Observe that we can also set, without loss of generality, $\beta =1$ in expression (\ref{Proj1}) of $P_{\pi}$ (setting $\mu=1$ when $\beta=0$ does not give extra information). 
Then, the set $\Pi\subset Gr(2,4)$ of planes $\pi$ for which $P_{\pi}$ is singular at the origin 
can be parametrised by $(\alpha,\lambda,\mu)\in \mathbb R^3$. 

We have the following about the $\mathcal A$-singularities of $P_{\pi}$, where the type number refers to one in the first column of Table \ref{tab:rieger}. 
   
\begin{theorem}\label{theo:notInf}
Let $M$ and $\Pi$ be as above and $p$ an elliptic, hyperbolic or parabolic point. 
\begin{itemize}
\item[(a1)] For $\pi \in \Pi$ and away from a surface $\mathcal S \subset \Pi$, the singularity of the map-germ $P_{\pi}$ is of Type 2 {\rm (}fold{\rm)}. 

\item[(a2)]  For $\pi\in S$ and away from a curve $\mathcal C$ on $S$, the singularity of $P_{\pi}$ is of Type 3 {\rm (}cusp{\rm)}. 

\item[(a3)] At elliptic points, for $\pi \in \mathcal C$ except possibly for at most ten exceptional planes $\pi^e_i$, $i = 1, \ldots, 10$ the singularity of $P_{\pi}$ is of Type 5 (swallowtail).
At hyperbolic points, the same holds except possibly for ten planes $\pi^h_j$, $j = 1, \ldots, 10$, and at parabolic points except possibly for seven planes $\pi^p_k$, $k = 1, \ldots, 7$.

\item[(a4)] 
At elliptic points, the planes $\pi^e_i$ in {\rm (a3)} are determined by up to ten directions in $T_p M$ called the {\rm elliptic butterfly directions} and determined by Equation {\rm(\ref{ButterflyDirections})}. At hyperbolic points, the planes $\pi^h_j$ are also determined by up to ten directions called the {\rm hyperbolic butterfly directions} and determined by Equation {\rm(\ref{ButterflyDirections_hyp})}, and at parabolic points, the planes $\pi^p_k$ are determined by up to seven directions called the {\rm parabolic butterfly directions} and determined by Equation {\rm(\ref{ButterflyDirections_par})}:
\begin{equation}\label{ButterflyDirections}
{\tiny
\begin{array}{l}
(a_{03}b_{03}-b_{03}b_{12}+b_{04})\alpha^{10}+(a_{03}^2-b_{03}^2-2b_{03}b_{21}-b_{12}^2-b_{13}^2+a_{04})\alpha^9\\
+(a_{03}a_{12}-a_{03}b_{21}-a_{12}b_{12}-a_{21}b_{03}-b_{03}b_{12}-3b_{03}b_{30}
-3b_{12}b_{21}+b_{04}+b_{22})\alpha^8\\
+(2a_{03}^2-2a_{03}b_{30}-2a_{12}b_{21}
-2a_{21}b_{12}-2a_{30}b_{03}-2b_{03}^2-4b_{12}b_{30}-2b_{21}^2+2a_{04}\\+a_{22}+b_{31})\alpha^7
+(3a_{03}a_{12}-a_{03}a_{30}-5a_{03}b_{03}-
a_{12}a_{21}-3a_{12}b_{30}-3a_{21}b_{21}-3a_{30}b_{12}\\-3b_{03}b_{12}+b_{03}b_{30}+b_{12}b_{21}-5b_{21}b_{30}+2a_{13}+a_{31}-b_{04}+b_{22}+b_{40})\alpha^6+(2a_{03}a_{21}\\-4a_{03}b_{12}+a_{12}^2-2a_{12}a_{30}
-4a_{12}b_{03}-a_{21}^2-4a_{21}b_{30}-4a_{30}b_{21}+3b_{03}^2-2b_{03}b_{21}-b_{12}^2\\+
2b_{12}b_{30}+b_{21}^2-3b_{30}^2+a_{04}+2a_{22}+a_{40}-b_{31}+b_{31})\alpha^5+(a_{03}a_{30}-3a_{03}b_{21}+a_{12}a_{21}\\-3a_{12}b_{12}
-3a_{21}a_{30}-3a_{21}b_{03}-5a_{30}b_{30}+5b_{03}b_{12}-b_{03}b_{30}
-b_{12}b_{21}+3b_{21}b_{30}+a_{13}\\+2a_{31}-b_{04}-b_{22}+b_{40})\alpha^4+(-2a_{03}b_{30}-2a_{12}b_{21}-2a_{21}b_{12}-2a_{30}^2-2a_{30}b_{03}+4b_{03}b_{21}\\+2b_{12}^2+2b_{30}^2+a_{22}+2a_{40}-b_{13}-b_{31})\alpha^3+(-a_{12}b_{30}-a_{21}a_{30}-a_{21}b_{21}-a_{30}b_{12}+3b_{03}b_{30}\\+3b_{12}b_{21}+b_{21}b_{30}
+a_{31}
-b_{22}-b_{40})\alpha^2+(-a_{30}^2+2b_{12}b_{30}+b_{21}^2+b_{30}^2+a_{40}-b_{31})\alpha\\+(a_{30}b_{30}+b_{21}b_{30}-b_{40})
=0;
\end{array}
}
\end{equation}
\begin{equation}\label{ButterflyDirections_hyp}
{\tiny
\begin{array}{l}
a_{03}^2\alpha^{10}+(-2a_{03}a_{21}-6a_{03}b_{03}-a_{12}^2+4a_{04})\alpha^8+
(-4a_{03}a_{30}-4a_{03}b_{12}-4a_{12}a_{21}-4a_{12}b_{03}\\+4a_{13})\alpha^7
+(-2a_{03}b_{21}-6a_{12}a_{30}-2a_{12}b_{12}-3a_{21}^2-2a_{21}b_{03}+5b_{03}^2+4a_{22}-4b_{04})\alpha^6+\\(-8a_{21}a_{30}+8b_{03}b_{12}+4a_{31}-4b_{13})\alpha^5
+(2a_{12}b_{30}+2a_{21}b_{21}-5a_{30}^2+2a_{30}b_{12}+6b_{03}b_{21}\\+3b_{12}^2+4a_{40}-4b_{22})\alpha^4+(4a_{21}b_{30}+4a_{30}b_{21}+4b_{03}b_{30}+4b_{12}b_{21}-4b_{31})\alpha^3+(6a_{30}b_{30}+\\2b_{12}b_{30}+b_{21}^2-4b_{40})\alpha^2-b_{30}^2=0;
\end{array}
}
\end{equation}
\begin{equation}\label{ButterflyDirections_par}
{\tiny
\begin{array}{l}
(-a_{03}a_{12}-a_{03}b_{03}+a_{04})\alpha^7+(-2a_{03}a_{21}-a_{12}^2+b_{03}^2+a_{13}-b_{04})\alpha^6+(-3a_{03}a_{30}+a_{03}b_{21}\\-3a_{12}a_{21}
+a_{12}b_{12}+a_{21}b_{03}+b_{03}b_{12}+a_{22}-b_{13})\alpha^5
+(2a_{03}b_{30}-4a_{12}a_{30}+2a_{12}b_{21}-2a_{21}^2\\+2a_{21}b_{12}+2a_{30}b_{03}+a_{31}-b_{22})\alpha^4
+(3a_{12}b_{30}-5a_{21}a_{30}+3a_{21}b_{21}+3a_{30}b_{12}-b_{03}b_{30}-b_{12}b_{21}\\+a_{40}-b_{31})\alpha^3+(4a_{21}b_{30}
-3a_{30}^2+4a_{30}b_{21}-2b_{12}b_{30}-b_{21}^2-b_{40})\alpha^2+b_{30}(5a_{30}-3b_{21})\alpha\\-2b_{30}^2=0. 
\end{array}
}
\end{equation}
At elliptic (resp. hyperbolic) points, the singularity of $P_{\pi^e_i}$, $i=1,\ldots,10$ {\rm (}resp. $P_{\pi^h_j}$, $j=1,\ldots,10${\rm )} is of Type 6 {\rm (}butterfly{\rm)} or worse. 
There is a curve on the surface $M$ {\rm (}which may be empty{\rm )} where the singularity 
of $P_{\pi^e_i},i=1,\ldots,10,$ {\rm (}resp. $P_{\pi^h_j}$, $j=1,\ldots,10${\rm )} becomes of Type 7. There is also another curve {\rm (}possibly empty{\rm )}
on $M$ where the singularity becomes of Type 8  and possible isolated points on this curve where it degenerates further to Type 9 or Type 10.

At  parabolic points, the singularity of $P_{\pi^p_k}$, $k=1,\ldots,7$ is of Type 6 {\rm (}butterfly{\rm)} or worse and happen along the parabolic curve. 
There is an isolated point on the parabolic set {\rm (}which may be empty{\rm )} where the singularity 
of $P_{\pi^p_k}$, $k=1,\ldots,7$ becomes of Type 7. There is also another isolated point {\rm (}possibly empty{\rm )} on the parabolic set where the singularity becomes of Type 8. 

\item[(a5)] Generically, at elliptic {\rm (}resp. hyperbolic{\rm )} points there is a curve on the surface $M$ when the discriminant of Equation {\rm {\rm (\ref{ButterflyDirections})}} {\rm (}resp. {\rm {\rm (\ref{ButterflyDirections_hyp})}}{\rm )} vanishes which we call {\rm butterfly elliptic curve} {\rm (}resp. {\rm butterfly hyperbolic curve}{\rm )}. There are isolated points at parabolic curve when the discriminant of Equation {\rm {\rm (\ref{ButterflyDirections_par})}} vanishes which we call {\rm butterfly parabolic points}.
\end{itemize}

\end{theorem}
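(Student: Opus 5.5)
We work throughout with the normal form \eqref{Proj1}, $P_\pi\sim_{\mathcal A}(y,g(x,y))$ with $g=g_{(\alpha,\lambda,1,\mu)}$. We apply Rieger's recognition criteria for corank~1 germs $(y,g)$: writing $g(x,y)=\sum g_{ij}x^iy^j$ modulo terms that can be removed by $\mathcal A$, the type is determined by the successive vanishing of $g_{20}$, then $g_{30}$, then $g_{40}$, then a modified $g_{50}$, and so on. Each coefficient must be corrected by the usual completing-the-square terms involving $g_{11}$, $g_{21},\dots$. The first step is therefore to compute the jets of $g$ in terms of $(\alpha,\lambda,\mu)$ and the Monge coefficients $a_{ij},b_{ij}$.

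Since ${\bf u}$ is non-asymptotic, Proposition~\ref{prop:class} shows that $j^2P_\pi$ is $(y,x^2)$ or $(y,xy)$. Hence $g_{20}$ (the coefficient computed in the proof of Proposition~\ref{prop:class}(i)) is linear in $\mu$ with nonzero leading coefficient. Its zero set is a smooth surface $\mathcal S\subset\Pi$, and off $\mathcal S$ the germ is a fold, which gives (a1). On $\mathcal S$ we solve $g_{20}=0$ for $\mu=\mu(\alpha)$. We then show that $g_{11}\neq0$ there, which is exactly the non-asymptotic hypothesis. The cusp condition is $g_{30}\ne0$, and $g_{30}$ is affine in $\lambda$ with a nonvanishing coefficient. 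So $g_{30}=0$ defines a curve $\mathcal C=\{\lambda=\lambda(\alpha)\}$, which gives (a2). On $\mathcal C$ the swallowtail condition is that the modified quartic coefficient $\tilde g_{40}$ be nonzero. After substituting $\mu(\alpha)$ and $\lambda(\alpha)$, $\tilde g_{40}$ becomes a rational function of $\alpha$. Clearing denominators gives a polynomial in $\alpha$, and we expect it to be exactly \eqref{ButterflyDirections}. Its degree (at most ten) gives the ten exceptional planes in (a3) and (a4).

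For the three point types, we first bring the $2$-jet to the normal forms of Table~\ref{Gibson} by $\mathcal G$ and affine changes:
\begin{itemize}
\item elliptic: $(xy,x^2-y^2)$;
\item hyperbolic: $(x^2,y^2)$;
\item parabolic: $(xy,y^2)$.
\end{itemize}
Recomputing in each case yields \eqref{ButterflyDirections}, \eqref{ButterflyDirections_hyp} and \eqref{ButterflyDirections_par} respectively. At the parabolic point the leading coefficient in $\alpha$ drops, which is why the degree falls to seven. We must also account for the directions excluded because they are asymptotic: at a parabolic point the unique asymptotic direction makes a factor vanish, and removing it lowers the degree.

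In the butterfly case, Types~7--10 are distinguished by the vanishing of further invariants $\tilde g_{50}$, $\tilde g_{60}$, $\tilde g_{70}$ together with the $y^7$-type modulus conditions. Here we invoke Montaldi's transversality for the family $j^kP$. Since the family is a versal unfolding of $\mathcal A_e$-codimension $\le4$, the stratum of Type $k$ has codimension $d_e$ in $M\times Gr(2,4)$. Its projection to $M$ therefore has dimension $2+4-(d_e+2)$, where the $+2$ counts the two conditions defining singular planes. This gives:
\begin{itemize}
\item Type~6 on an open set;
\item Type~7 on curves;
\item Type~8 on curves, with Types~9 and~10 at isolated points (taking stratum codimension for the moduli).
\end{itemize}
At parabolic points one extra condition $\delta=0$ is imposed, so every locus drops one dimension: butterflies occur along $\Delta$, and Types~7 and~8 occur at isolated points.

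For (a5), the discriminant of the butterfly equation is one smooth function on $M$. We argue by genericity, using transversality of the map $p\mapsto$ (coefficients) to the discriminant hypersurface in the space of polynomials of degree ten (resp.\ seven). This makes the zero set a regular curve in the open elliptic and hyperbolic regions, and on $\Delta$ (a curve) it is a set of isolated points.

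We expect the main obstacle to be the symbolic step. It requires carrying the $\mathcal A$-reductions far enough that $\tilde g_{40}$ is correct, including the substitutions of $\mu(\alpha)$ and $\lambda(\alpha)$ and their denominators. It also requires verifying that the resulting polynomial does not degenerate identically. We would do this with computer algebra, and confirm non-degeneracy by checking a single explicit surface.
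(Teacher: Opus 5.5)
Your plan is correct, and for (a1)--(a3) it is the paper's argument. The paper also fixes $\beta=1$ and uses the normal forms of Table~\ref{Gibson}. It cuts out $\mathcal S=\{g_{20}=0\}$ (in the elliptic case $\mu=-(\alpha^2-1)/\alpha$) and checks that $g_{11}=-(\alpha^2+1)/\alpha\neq0$ there. It solves the $x^3$-coefficient $\Lambda=0$ linearly for $\lambda$ (the $\lambda$-coefficient $-\alpha(\alpha^2+1)$ does not vanish on $\mathcal S$) to get $\mathcal C$, and reads the butterfly equations off the $x^4$-coefficient by computer algebra.

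The real difference is in (a4)--(a5). The paper stays explicit:
\begin{itemize}
\item it computes the $x^5$-coefficient $D_1(\alpha)$, of degree $18$;
\item it uses Rieger's Type~7 criterion to exhibit the Type~7 locus as the regular curve $D_2(\alpha_i)x+D_3(\alpha_i)y+O(2)=0$;
\item it obtains the Type~8 curve as the zero set of the resultant of $D_1$ and \eqref{ButterflyDirections};
\item it locates Types~9 and~10 by $\Lambda_1=0$ and $g_{60}=0$.
\end{itemize}
You replace all of this by one codimension count from Montaldi's transversality theorem, which the paper states before the theorem but does not invoke in the proof. Your count is much lighter and uniform in the type. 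However, it only yields ``a curve'' as the image in $M$ of a one-dimensional stratum of $M\times Gr(2,4)$; regularity of that image and explicit defining equations are what the paper's computation buys.

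Two points in your version need a separate genericity argument.
\begin{itemize}
\item Montaldi's theorem only controls $\mathcal A$-invariant conditions on $j^kP_\pi$, whereas $\delta=0$ is a condition on the $2$-jet of $M$. So your ``drop by one dimension'' at parabolic points needs Thom transversality for the jets of the Monge form, ensuring the Type~7 and Type~8 curves cross $\Delta$ transversally.
\item In (a5), which the paper merely asserts, the discriminant hypersurface is singular along the triple-root and two-double-root strata. Transversality therefore gives a regular curve only away from isolated points.
\end{itemize}

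Some slips to fix:
\begin{itemize}
\item For Types~5--10 the stratum in $M\times Gr(2,4)$ has codimension $d_e+2$, not $d_e$. The $2$ is the source dimension: the $\mathcal A$-orbit in the jet space has codimension $d_e+2$ (singularity, $g_{20}=0$, $g_{30}=0,\dots$). It is not ``two conditions defining singular planes'': being singular at $p$ is a single condition, since $\Pi$ is a hypersurface in $Gr(2,4)$. Your resulting dimensions are nevertheless right.
\item No ``modified'' coefficients are needed up to Type~6. The $\mathcal K$-type of $(y,g)$ is that of $g(x,0)$, so the conditions are on the plain $g_{30},g_{40},g_{50}$. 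Corrections only enter the Type~6/7 and 8/9 distinctions.
\item The $\mu$-coefficient $-Q_2(1,\alpha)$ of $g_{20}$ can vanish (at $\alpha=0$ in the elliptic chart). But this only happens where $g_{20}\neq0$, so $\mathcal S$ is still a graph over $\alpha\neq0$.
\item The paper's proof uses the elliptic form $(x^2-y^2,xy)$, not the ordering $(xy,x^2-y^2)$ of Table~\ref{Gibson}. With your ordering and $\beta=1$ you will not reproduce \eqref{ButterflyDirections} verbatim.
\end{itemize}
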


\begin{proof}
Consider $P_\pi$ as in \eqref{Proj1} with $\beta=1$. For elliptic, hyperbolic, and parabolic points, we may further simplify the quadratic part $j^2(f_1,f_2)=(Q_1,Q_2)$, as in Table \ref{Gibson}, by taking
\[
(Q_1, Q_2) = (x^2 - y^2, xy), \quad (x^2, y^2), \quad \text{and} \quad (xy, y^2),
\]
respectively. Here we write $g = g_{(\alpha,\lambda,1,\mu)}$.
The singularities we are seeking in Table \ref{tab:rieger} depend on certain jets of $g$, which we compute using Maple.
We prove the case when $p$ is an elliptic point; the other cases are proved analogously and are omitted.

\

\noindent{\it Elliptic case:} The task then becomes a problem of recognition of singularities
of map-germs from the plane to the plane. We use the criteria in Table 6.1 in \cite{Kabata,IFRT-book}.

(a1) We find that 
$$
j^2g=(1-\alpha^2-\alpha\mu)x^2-(2\alpha+\mu) xy-y^2.
$$

Therefore, the singularity of $P_{\pi}$ is a fold unless the coefficient of $x^2$ in  $j^2g$ is zero, 
that is, $1-\alpha^2-\alpha\mu=0$, i.e, $(\alpha+(1/2)\mu)^2-(1/4)\mu^2=1$. This is the equation of the surface $\mathcal S$ in $\Pi$, which is the product of a hyperbola with a line.

(a2) On $\mathcal S$, we can write $\mu=-\displaystyle\frac{\alpha^2-1}{\alpha}$. Then, the coefficient of $xy$ in 
$j^2g$ becomes $-\displaystyle\frac{\alpha^2+1}{\alpha}$ and is never zero, so we get singularities in Table \ref{tab:rieger} with a 2-jet 
$\mathcal A^{(2)}$-equivalent to $(y,xy)$, that is, $g$ has a regular singular set (see Proposition \ref{prop:class}(i)).

For $\pi\in \mathcal S$, the singularity is a cusp (Type 3) unless the coefficient of $x^3$ in the Taylor expansion of $g$ is zero, that is, \begin{equation}\label{cuspcondition}
\begin{array}{lcl}
\Lambda&=&-\alpha(\alpha^2+1)\lambda+\alpha^5b_{03}+(a_{03}+b_{12})\alpha^4+(a_{12}-b_{03}+b_{21})\alpha^3\\
&&+(a_{21}-b_{12}+b_{30})\alpha^2+(a_{30}-b_{21})\alpha-b_{30}=0.
\end{array}
\end{equation}
If this happens, 
we can solve for $\lambda$ because $\alpha\ne 0$ on $S$ and get a curve $C\subset S$, parametrised by $(\alpha, \lambda(\alpha),\mu(\alpha))$, where the
singularity of $P_{\pi}$ is more degenerate than cusp.

(a3) For $\pi\in C$, the singularity is a swallowtail (Type 5) unless the coefficient of $x^4$ in the Taylor expansion of 
$g$ is zero, that is,  Equation (\ref{ButterflyDirections}).
Generically 
 the zeros of Equation  (\ref{ButterflyDirections}) has up to ten real distinct solutions $\alpha_i,i=1,\ldots,10$ which determine planes $\pi'_i, i=1,\ldots,10$ on $C$. 

 (a4)  The singularity of $P_{\pi'_i}$, $ i=1,\ldots,10$, is of Type 6 or 7 
when the coefficients $D_1(\alpha_i)$ of $x^5$ in the Taylor expansion of $g$ is not zero, where $D_1(\alpha)$ is a polynomial in $\alpha$ of degree 18.
In this case, according to the criteria in Table 6.1 in \cite{IFRT-book}, and denoting by $g_{ki}$ the coefficient of $x^{k}y^i$ 
in the Taylor expansion of $g$, the singularity is of Type 7 when
\begin{equation*}
\label{eq:Type7}
(8g_{50}g_{70}-5g_{60}^2)g_{11}+2g_{50}(g_{21}g_{60}-20g_{31}g_{50})g_{11}+35g_{21}^2g_{50}^2=0,
\end{equation*}
otherwise, it is of Type 6. 
The Type 7 singularities occur along a regular curve on $M$, with equation $D_2(\alpha_i)x+D_3(\alpha_i)y+O(2)=0$, where $D_2(\alpha)$ and $D_3(\alpha)$ are polynomial in $\alpha$.  There are also some isolated points on the curve where $D_2(\alpha_i)=0$ or $D_3(\alpha_i)=0$. 

 Suppose now that $D_1(\alpha_i)=0$. 
For a generic surface $M$, the resultant, with respect to $\alpha$, 
of $D_1(\alpha)$  and of the left hand side of Equation (\ref{ButterflyDirections}) 
vanishes along a regular curve on $M$.
On this curve, the singularity of $P_{\pi'_i}$ is of Type 8 when 
\begin{equation*}
\label{eq:Type8}
g_{60}\neq0 \; \mbox{ and \;$\Lambda_1=(5g_{60}g_{80}-3g_{70}^2)g_{11}^2+g_{60}(g_{21}g_{70}-30g_{31}g_{60})g_{11}+27g_{21}^2g_{60}^2\neq0$.}
\end{equation*}
It has isolated points on this curve 
where it becomes of Type 9 (if $\Lambda_1=0$) or Type 10 (if $g_{60}=0$).

(a5) The butterfly directions are determined by the solution of Equation (\ref{ButterflyDirections}). Denoting by $\xi$
its discriminant, then $\xi=0$ is generically a regular curve on surface $M$.

\qed
\end{proof}

In Section \ref{Apparent contour}, we give a geometric interpretation of this theorem in terms of the geometry of the apparent contour.

\begin{figure}[htb]
\includegraphics[ width=8.5cm, height=5cm]{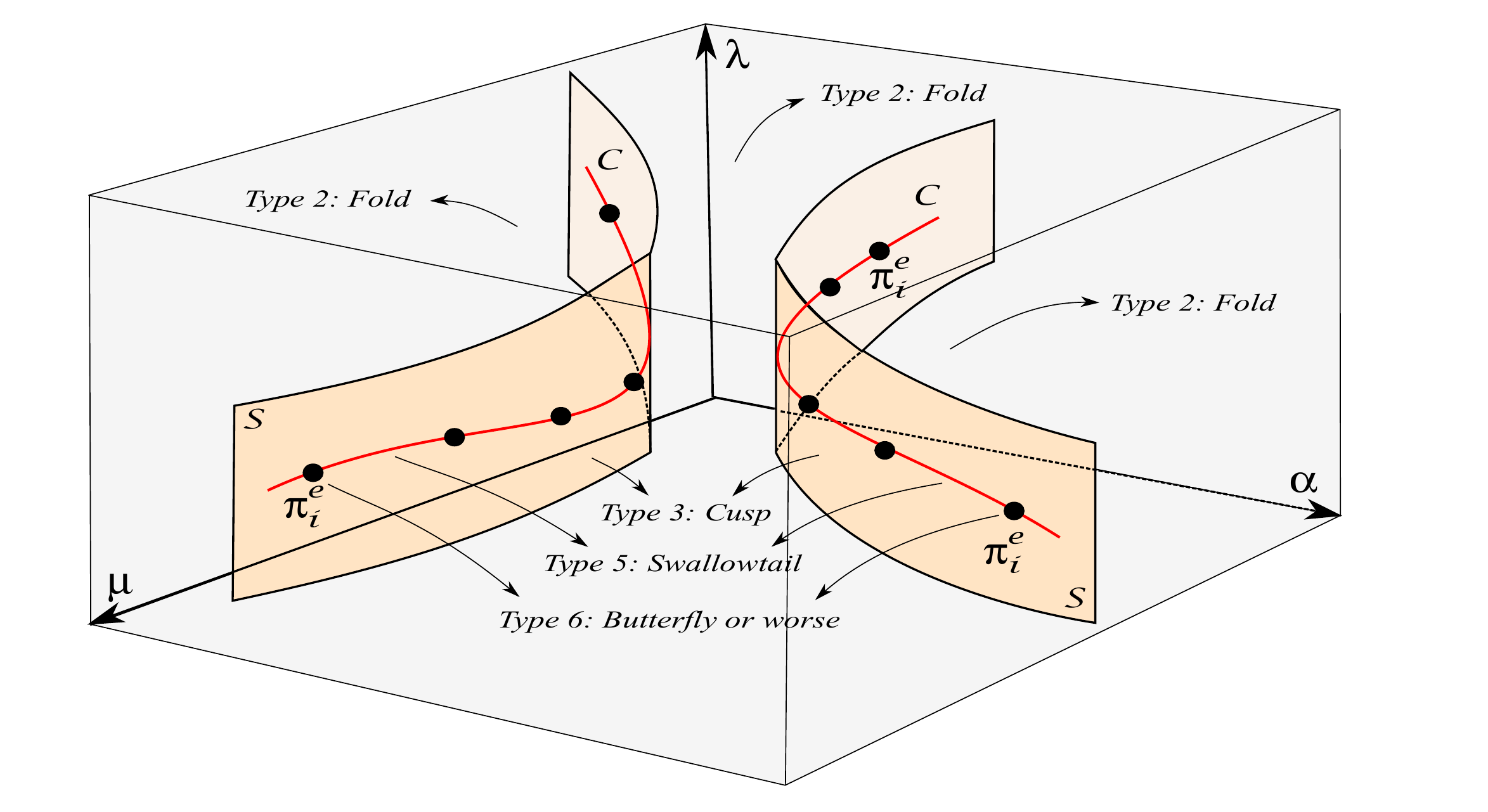}
\hspace{-1cm}
\caption{Stratification of the space of planes $\Pi$ according to the singularities of $P_\pi$ at elliptic points. In the gray region, $P_\pi$ is a fold. In the cream-colored region, $P_\pi$ is a cusp. The red curve corresponds to swallowtail singularities, while at the isolated black points on the red curve, $P_\pi$ has a butterfly singularity or worse.}
\label{fig:NewSpecialC}
\end{figure}

\subsubsection{${\bf u}$ is an asymptotic direction in $\pi$}\label{HyperbolicSec} In this case, the assumptions of Remark \ref{rem:class} (ii) are satisfied, where ${\bf u}=(1,\alpha,0,0)$ is an asymptotic direction. We analyze the situations in which $p$ is a hyperbolic, parabolic, or inflection point.


\subsubsection*{Hyperbolic case:}
We begin with the case where $p$ is a hyperbolic point. In this situation, we may assume that $M$ is given as in (\ref{Monge}) with $j^2(f_1,f_2)=(x^2,y^2)$. Since we are interested in the case where $j^2P_\pi\sim_{\mathcal A^{(2)}} (y,0)$, the conditions $\Omega({\bf u})=\mathcal B({\bf w})=0$ must hold. Hence, it follows that $\alpha=\beta=0$ and $\mu\neq0$ in expression \eqref{Proj1} of $P_{\pi}$. Without loss of generality, we can take $\mu =1$, therefore, the set $\Pi\subset Gr(2,4)$ 
contains a line $L$, parametrized by $\lambda$, along which $j^2P_\pi\sim_{\mathcal A^{(2)}} (y,0)$.
Moreover, the singular set $\Sigma_\pi$ has a Morse $A_1^\pm$-singularity if and only if 
\begin{equation}\label{MorseCondition}
4\lambda^2+4b_{21}\lambda+(b_{21}^2-3b_{12}b_{30})
\end{equation}
is non-zero.

\begin{theorem} \label{theo:hyp}

Let $M$ and the line $L$ be as above. Suppose that $p$ is a hyperbolic point.

\begin{itemize}
\item[(a1)]  For $\pi\in L\backslash\{\pi_1, \pi_2\}$ and away from a curve $\mathcal C_{4_2}$ on $M$ {\rm(}see {\rm(a4)}{\rm)}, the singularity of the map-germ $P_{\pi}$ is of Type $4_{2}^{\pm}$ {\rm(}beaks and lips{\rm)}. In particular, $\Sigma_\pi$ has a Morse $A_1^\pm$-singularity. 
 
\item[(a2)]  The planes $\pi_1$ and $\pi_2$ in {\rm(a1)} are determined by the roots of Equation~\eqref{MorseCondition}, provided that its discriminant is nonzero. The singularity of $P_{\pi_i}$, $i=1,2$, is of Type $4_{3}$ {\rm(}goose{\rm)}. In particular, $\Sigma_\pi$ has $A_{\geq2}$-singularity.  

\item[(a3)] There exists a curve on the surface $M$, called the {\normalfont $4_4$-curve}, along which $P_{\pi_i}$, for $i=1,2$, has a singularity of Type $4_{4}$ {\rm(}ugly goose{\rm)}. Moreover, there exists an isolated point on the $4_{4}$-curve where the singularity of $P_{\pi_i}$ becomes of Type $4_5$.

\item[(a4)] Generically, the discriminant of Equation~\eqref{MorseCondition} vanishes along 
a curve $\mathcal C_{4_2}$ on $M$. For $\pi\in L\backslash\{\pi'\}$ and along the curve $\mathcal C_{4_2}$, the singularity of $P_\pi$ is of Type $4_{2}^{\pm}$. Along this curve, the singularity of $P_{\pi'}$ is of Type $4_{3}$ and becomes of Type $4_4$ at isolated points on $\mathcal C_{4_2}$. (see Proposition \ref{condSingA3}  for a geometric interpretation)

 \item[ (b1)] 

For $\pi\in L\backslash\{\pi_i\}$, $i=3,\ldots,7$, and along a curve on the surface $M$, called the {\normalfont $11_5$-curve}, the singularity of $P_{\pi}$ is of Type $11_{5}$ {\rm(}gulls{\rm)}. The planes $\pi_6$ and $\pi_7$ are determined by the roots of
\begin{equation}\label{11_k}
\begin{array}{l}
(2a_{30}b_{21}^2+b_{12}b_{21}^2-8a_{30}b_{40}-2b_{21}b_{31}+4b_{50})\lambda^2+\\
(a_{30}b_{21}^3-4a_{30}b_{21}b_{40}+4b_{12}b_{21}b_{40}-b_{21}^2b_{31}+4b_{21}b_{50}-4b_{31}b_{40})\lambda+\\
4b_{12}b_{40}^2+b_{21}^2b_{50}-2b_{21}b_{31}b_{40},
\end{array}
\end{equation}
provided that its discriminant is nonzero. For $\pi=\pi_i$, $i=6,7$, the singularity of $P_{\pi_i}$ is of Type $11_{7}$ {\rm(}ugly gulls{\rm)}. Moreover, there exists an isolated point on the $11_5$-curve where the singularity of $P_{\pi_i}$ becomes of Type $11_{9}$. In particular, $\Sigma_\pi$ has a Morse $A_1^\pm$-singularity.

\item[ (b2)] The planes $\pi_4$ and $\pi_5$, are determined by zeros of 
$
\lambda^2+b_{21}\lambda+b_{40}
$
provided that its discriminant is nonzero.
The singularity of $P_{\pi_i}$, $i=4,5$ is of Type $12$, $13$, or $15$, where singularities of Type $13$ and $15$ occur at isolated points on the $11_5$-curve. In particular, $\Sigma_\pi$ has a Morse $A_1^\pm$-singularity. 

\item[ (c1)] For the plane $\pi_3$, the singularity of $P_{\pi_3}$ is of Type $16$, $17$, $18$, or $19$, where singularities of Type $16$ occur along the $11_5$-curve, while singularities of Type $17$, $18$, and $19$ occur at isolated points on this curve.  In particular, $\Sigma_\pi$ has $A_{\geq2}$-singularity.  
\end{itemize}
\end{theorem}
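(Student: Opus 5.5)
We will argue exactly as in the proof of Theorem \ref{theo:notInf}: write down the Taylor expansion of the germ $g=g_{(0,\lambda,0,1)}$ in \eqref{Proj1} and recognise its $\mathcal A$-type with the recognition criteria for corank-one germs in Table 6.1 of \cite{Kabata,IFRT-book}. At a hyperbolic point we take $j^2(f^1,f^2)=(x^2,y^2)$. Then Proposition \ref{prop:class}(ii) forces $\alpha=\beta=0$, and we normalise $\mu=1$. This gives
\[
P_\pi\sim_{\mathcal A}(y-\lambda f^1-\lambda\cdot 0,\,-f^2)\sim_{\mathcal A}(y,g(x,y)),
\]
with $j^2g\equiv 0$ modulo the first component. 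Substituting $y$ from the first component (the first component is $y-\lambda x^2+O(3)$), we obtain $g$ explicitly in terms of $\lambda$ and the coefficients $a_{ij},b_{ij}$. We denote by $g_{ki}$ the coefficient of $x^ky^i$.

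The singular set is $g_x=0$. Its quadratic part is governed by $g_{30},g_{21},g_{12}$, and its discriminant is \eqref{MorseCondition}. The case split is then by the cubic coefficient $g_{30}$, which we expect to be a multiple of $b_{30}$; generically $b_{30}\neq0$ off a curve on $M$.

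\textbf{Case $g_{30}\neq 0$.} The $3$-jet is $\mathcal A^{(3)}$-equivalent to $(y,x^3\pm xy^2)$ or to $(y,x^3)$, according as \eqref{MorseCondition} is nonzero or zero. Nonzero gives lips or beaks, Type $4_2$, which is (a1). For the two roots $\pi_1,\pi_2$ of \eqref{MorseCondition} the criteria for $4_k$ involve the coefficients of $xy^{k}$ after completing the cube. Type $4_3$ occurs unless one further polynomial condition $\Phi_3(\lambda_i)=0$ holds, giving (a2). Imposing $\Phi_3(\lambda_i)=0$ and eliminating $\lambda_i$ by a resultant with \eqref{MorseCondition} gives one equation in the $4$-jet coefficients of $M$ at $p$. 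Moving $p$ along $M$, this cuts out the $4_4$-curve, and one more condition gives isolated $4_5$ points; this is (a3). For (a4), the discriminant $b_{21}^2-(b_{21}^2-3b_{12}b_{30})=3b_{12}b_{30}$ of \eqref{MorseCondition} vanishes on a curve $\mathcal C_{4_2}$ (essentially where $b_{12}=0$). There the two roots merge into one plane $\pi'$, and the same analysis applies to that single plane.

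\textbf{Case $g_{30}=0$.} This occurs along a curve in $M$, which we will show is the $11_5$-curve; it should coincide with the $B_2$-curve since $b_{30}=0$ is the $B_2$ condition for $P_{\bf u}$. On this curve the jet becomes $(y,g_{21}x^2y+g_{12}xy^2+g_{40}x^4+\cdots)$.
\begin{itemize}
\item[(b)] When $g_{21}\neq0$ we complete the square to reach $(y,xy^2+x^4\cdots)$ after swapping roles. Types $11_{2k+1}$, $12$, $13$, $15$ are then distinguished by the vanishing of the $x^4$ coefficient (expected to be $\lambda^2+b_{21}\lambda+b_{40}$, which gives $\pi_4,\pi_5$) and of the $x^5$ coefficient after the relevant coordinate changes (expected to be \eqref{11_k}, which gives $\pi_6,\pi_7$).
\item[(c)] When $g_{21}=0$, which happens at a single $\lambda$ giving the plane $\pi_3$, we obtain Types $16$--$19$. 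These are separated by the criteria in terms of $g_{40},g_{50},\dots$, and each extra condition drops the dimension to isolated points on the curve.
\end{itemize}
In each subcase the statement about $\Sigma_\pi$ (Morse versus $A_{\ge2}$) is read off from the quadratic part of $g_x$.

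\textbf{Main obstacle.} The genericity claims are the hard part: that each degeneracy locus is a regular curve, or isolated points, on $M$. For this we will invoke Montaldi's transversality theorem \cite{Montaldi2}, which ensures the family $P$ versally unfolds the singularities. Concretely, we will check that the gradients, with respect to the free $4$- and $5$-jet coefficients, of the successive defining polynomials (after eliminating $\lambda$ by resultants) are independent. The heaviest part is the bookkeeping for the $11_{2k+1}$ and $18,19$ criteria, which require several changes of coordinates; we would do these with computer algebra.
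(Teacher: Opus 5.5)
Your proposal is correct and is essentially the paper's proof. With $\alpha=\beta=0$ and $\mu=1$ one gets $g_{30}=b_{30}$, $g_{21}=2\lambda+b_{21}$, $g_{12}=b_{12}$ and $g_{40}=\lambda^2+b_{21}\lambda+b_{40}$; the paper then splits on $b_{30}$ and, on $b_{30}=0$, on $g_{21}$, $g_{40}$ and the $x^5$-invariant $g_{21}^2g_{50}+4g_{12}g_{40}^2-2g_{21}g_{31}g_{40}$ (which is \eqref{11_k}), exactly as you propose, to sort Types $4_k$, $11_{2k+1}$, $12$, $13$, $15$ and $16$--$19$ via the recognition criteria, with the statements on $\Sigma_\pi$ read off from $g_{21}^2-3g_{12}g_{30}$. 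One notational slip: in the $(y,\cdot)$ frame the $11_{2k+1}$ target is $(y,x^2y+x^4+\cdots)$, not $(y,xy^2+x^4+\cdots)$; and your plan to check the genericity claims via Montaldi's transversality goes slightly beyond the paper, which simply asserts them.
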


\begin{proof}
We can make changes of coordinates in the source so that $P_{\pi}(x,y)\sim_{\mathcal A} (y,g_{(0,\lambda,1)}(x,y)).$
We have
$$
j^3g=b_{30}x^3+(b_{21}+2\lambda)x^2y+b_{12}xy^2+b_{03}y^3
$$

\noindent {\rm(a1)} If we denote by $g_{ij}$ the coefficient of $x^{i}y^i$ in $g$, then the condition for $g$ to have a Type $4^\pm_2$ (lips/beaks) singularity is 
$$
\left\{
\begin{array}{l}
g_{30}=b_{30} \neq 0\\ 
g_{21}^2-3g_{12}g_{30}=4\lambda^2+4b_{21}\lambda+(b_{21}^2-3b_{12}b_{30})\neq0.
\end{array}
\right.
 $$
Note that Equation ~\eqref{MorseCondition} and $g_{21}^2-3g_{12}g_{30}$ coincides, therefore, when the discriminant of this equation is non-zero (i.e., $b_{12}\neq0$), then away from two distinct planes and a curve $\mathcal C_{4_2}$ on $M$
, the singularity is of Type $4^\pm_2$. In particular, under this condition, $\Sigma_\pi$ has a Morse singularity of type $A_1^\pm$.

\noindent {\rm(a2)} For each one these planes, i.e. when $4\lambda^2+4b_{21}\lambda+(b_{21}^2-3b_{12}b_{30})=0$, the singularity of $P_{\pi}$ is Type $4_3$ (goose) unless 
\begin{equation}\label{4_4-curve}
27g_{13}g_{30}^3+9g_{21}^2g_{30}g_{31}-18g_{21}g_{22}g_{30}^2-4g_{21}^3g_{40}=0.
\end{equation}
This expression is a polynomial of degree 4 in the variable $\lambda$. Moreover, since the discriminant of the quadratic terms of $g_x$ is given by zeros of Equation ~\eqref{MorseCondition}, 
it follows that $\Sigma_\pi$ has an $A_2$-singularity or worse.

\noindent {\rm(a3)} When the resultant of Equation~\eqref{4_4-curve} and $g_{21}^2-3g_{12}g_{30}$ is non-zero, there is a regular curve on $M$  where the singularity of $P_{\pi}$ is $4_4$ if 
$$
\begin{array}{l}
81g_{14}g_{30}^5+5g_{21}^4g_{30}g_{50}-12g_{21}^4g_{40}^2-12g_{21}^3g_{30}^2g_{41}+36g_{21}^3g_{30}g_{31}g_{40}-36g_{21}^2g_{22}g_{30}^2g_{40}\\
+27g_{21}^2g_{30}^3g_{32}-27g_{21}^2g_{30}^2g_{31}^2+54g_{21}g_{22}g_{30}^3g_{31}-54g_{21}g_{23}g_{30}^4-27g_{22}^2g_{30}^4\neq0.
\end{array}
$$
In the opposite case, the singularity of $P_{\pi}$ is of Type $4_5$, and it happens at an isolated point on $M$.

\noindent {\rm(a4)} 
The discriminant of Equation~\eqref{MorseCondition} vanishes when $b_{12}=0$. On a generic surface $M$, this defines the regular curve $\mathcal C_{4_2}$. In this case,
$
g_{21}^2-3g_{12}g_{30}=(2\lambda+b_{21})^2.
$
Hence, when $2\lambda+b_{21}\neq0$, the singularity is of Type $4^\pm_2$ away from a distinguished plane $\pi'$. For this plane, given by $\lambda=-\frac{b_{21}}{2}$, Equation~\eqref{4_4-curve} implies that the singularity is of Type $4_3$ whenever $b_{13}-b_{21}a_{12}\neq0$. Otherwise, the singularity is of Type $4_4$, occurring at isolated points of $\mathcal C_{4_2}$.

 The singularity of $P_{\pi}$ is of Type $11_{2k+1}$ when 
$$
\left\{
\begin{array}{l}
g_{30} =b_{30}=0;\\ 
g_{21}=2\lambda+b_{21}\neq0;\\ 
g_{40}=\lambda^2+b_{21}\lambda+b_{40}\neq0.
\end{array}
\right.
 $$
\noindent {\rm(b1)} If $g_{21}^2g_{50}+4g_{12}g_{40}^2-2g_{21}g_{31}g_{40}\neq0$, i.e., Equation \eqref{11_k} is non-zero, 
then away from five  planes (determined by zeros of $g_{21}$, $g_{40}$ and Equation (\ref{11_k})) and along to the regular $11_5$-curve on $M$, 
the singularity is of Type $11_{5}$ (gulls).  When Equation (\ref{11_k}) is zero and its discriminant is non-zero,  there exist two distinct planes such that the singularity  is of Type $11_{7}$ (ugly gulls) if the condition involving the coefficients of order $7$ of $g$, which we call $D$, is not zero, where
\begin{equation}\label{11_7}
\begin{array}{l}
D=128g_{12}^3g_{40}^4-112g_{12}^2g_{21}g_{31}g_{40}^3-12g_{12}g_{21}^4g_{40}g_{60}+16g_{12}g_{21}^3g_{40}^2g_{41}\\-16g_{12}g_{21}^2g_{22}g_{40}^3
+32g_{12}g_{21}^2g_{31}^2g_{40}^2+8g_{13}g_{21}^3g_{40}^3-g_{21}^6g_{70}+3g_{21}^5g_{31}g_{60}\\+2g_{21}^5g_{40}g_{51}-4g_{21}^4g_{31}g_{40}g_{41}
-4g_{21}^4g_{32}g_{40}^2+4g_{21}^3g_{22}g_{31}g_{40}^2-3g_{21}^3g_{31}^3g_{40}.
\end{array}
\end{equation}
 The singularity becomes of Type $11_9$ if $D=0$, and this occurs at an isolated point on the $11_5$-curve. In particular, as $g_{21}\ne0$, Equation \eqref{MorseCondition} is nonzero, then $\Sigma_\pi$ is a Morse singularity.


\noindent {\rm(b2)} When $g_{21}\neq0$ and $g_{30}=g_{40}=0$, then the discriminant of $g_{40}=0$ never vanish, i.e., $b_{21}^2-4b_{40}>0$, thus there exist exactly two planes where the singularity along the $11_5$-curve is of Type $12$ if  $2g_{21}g_{60}-5g_{31}g_{50}\neq0$. In the opposite case, the singularity is of Type $13$, and it occurs at an isolated point on the $11_5$-curve. In the same way, if a condition involving the coefficients of order $5$ of $g$ is zero, then at an isolated point on the  $11_5$-curve, the singularity is of Type $15$. 

\noindent {\rm(c1)} Finally, when $g_{30}=g_{21}=0$ and $g_{12}\neq0$, i.e., $\lambda=-\frac{b_{21}}{2}$ and $b_{12}\neq0$, there exists a plane $\pi_1$ such that the singularity of $P_{\pi_1}$ is Type $16,17,18$, and $19$ where Type $16$ occurs along the $11_5$-curve if $\xi=2g_{12}g_{50}-8g_{22}g_{40}+3g_{31}^2\neq 0$ and $4b_{40}- b_{21}^2\neq0$. We obtain a singularity of Type $17$ (resp. $18$) at isolated points on this curve when $\xi=0$ (resp. $4b_{40}-b_{21}^2=0$). The Type $19$ happens at isolated points, if $b_{12}=0$. Finally, since $b_{30}=0$ and $\lambda=-\frac{b_{21}}{2}$, Equation~\eqref{MorseCondition} vanishes. Hence, $\Sigma_\pi$ has an $A_{\geq2}$-singularity.\qed
\end{proof}

\

Now we present some geometric interpretation of Theorem \ref{theo:hyp}. Following from the duality in \cite{bruce-nogueira} between the
bifurcation set of the family of height functions with that of the family of orthogonal projections, that when we consider the orthogonal projection of $M$ in Monge form as (\ref{Monge}) at a hyperbolic point $p$ along the asymptotic direction ${\bf u}  = (1,0,0,0)$, so that this projection is given by
\[
P_{\bf u}(x,y) = \bigl(y, f_1(x,y), f_2(x,y)\bigr).
\]
When the $j^3P_{\bf u}(x,y)$ is $\mathcal{A}^{(3)}$-equivalent to
$\bigl(y, x^2, b_{30}x^3 + b_{12}xy^2 \bigr),$
the singularity is of Type $B_k$, $k \geq 1$, if and only if $b_{12} \neq 0$, and of Type $B_{\geq 2}$ if, in addition, $b_{30} = 0$. It is of Type $S_k$, $k \geq 2$, if and only if $b_{30} \neq 0$ and $b_{12} = 0$. Finally, it is of Type $C_3$ when $b_{30} = b_{12} = 0$. 

The binormal direction associated to ${\bf u}$ is ${\bf w} = (0,0,0,1)$, and the height function along ${\bf w}$ is given by
\[
h_{\bf w} = y^2 + b_{30}x^3 + b_{21}x^2y + b_{12}xy^2 + b_{03}y^3+O(4).
\]
It has an $A_{\geq 3}$-singularity (resp. $A_{\geq 4}$-) if and only if $b_{30} = 0$ (resp. $b_{30} =b_{21}^2-4b_{40}= 0$). 



Recall that $S_2$-curve and $B_2$-curve are the closure of the set of points $p$ on $M$ for which there exists a projection $P_{\bf u}$ having an $S_2$ (resp. $B_2$)-singularity at $p$. Since the $11_5$-curve (resp. $4_4$-curve and curve $\mathcal C_{4_2}$) from Theorem \ref{theo:hyp} can be seen as the closure of the set of points $p \in M$ for which there exists a projection $P_\pi$ having an $11_5$ (resp. $4_4$ and $4_2$)-singularity or worse at $p$, Thus, the following result is a direct consequence of Theorem~\ref{theo:hyp} (b1).

\begin{proposition}\label{condSingA3} Let $M$ be a generic surface in $\mathbb R^4$. With the conditions of Theorem \ref{theo:hyp} and the above comments, the following holds: 
\begin{itemize}
\item[(i)] The $11_5$-curve coincides with the $B_2$-curve. 
\item[(ii)] The $A_4$-singularity of $h_{\bf w}$ is related to the singularity of Type 18 of $P_\pi$ and the $C_3$-singularity of $P_{\bf u}$ with Type 19 of $P_\pi$.
\item[(iii)] The curve $\mathcal C_{4_2}$  coincides with the $S_2$-curve. The $S_3$-singularity of $P_{\bf u}$ is related to singularity of Type $4_4$ of $P_\pi$. In particular, the $4_4$-curve intersects the $S_2$-curve transversally at an $S_3$-singularity of $P_{\bf u}$.
\end{itemize}
\end{proposition}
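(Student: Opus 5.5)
The proof is a dictionary between conditions on the Monge coefficients. We work at a hyperbolic point with $j^2(f^1,f^2)=(x^2,y^2)$, ${\bf u}=(1,0,0,0)$ asymptotic and ${\bf w}=(0,0,0,1)$ its binormal. We compare the coefficient conditions in the proof of Theorem \ref{theo:hyp} with those just recalled for $P_{\bf u}$ and $h_{\bf w}$. Throughout, we use that for a generic $M$ each set involved is the closure of the locus where there exist ${\bf u}$ or $\pi$ with the given singularity. Each such set is cut out, in the Monge setting moving with $p$, by the stated coefficient equations, and Montaldi's transversality theorem makes these equations define regular curves or isolated points.

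For (i), the proof of Theorem \ref{theo:hyp} shows that $P_\pi$ is of Type $11_{2k+1}$ exactly when $b_{30}=0$, $g_{21}=2\lambda+b_{21}\neq0$ and $g_{40}\neq0$. It is of Type $11_5$ when in addition Equation \eqref{11_k} is nonzero. Since $\lambda$ is a free parameter on $L$, the conditions involving $\lambda$ exclude only finitely many planes. So the locus of $p$ admitting some $\pi$ with an $11_5$-singularity is exactly $\{b_{30}=0\}$, provided we also have $b_{12}\neq 0$ (Equation \eqref{11_k} is not identically zero in $\lambda$ for generic $M$). On the other hand, $P_{\bf u}$ is of Type $B_{\geq2}$ iff $b_{30}=0$ and $b_{12}\neq0$. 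Taking closures, the two curves coincide. We also note that this is the $A_3$-set of $h_{\bf w}$, consistent with \cite{bruce-nogueira}.

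For (ii), on this curve we use (c1): $\lambda=-b_{21}/2$ gives $g_{21}=0$. Type 18 occurs exactly when $4b_{40}-b_{21}^2=0$, which is precisely the condition for $h_{\bf w}$ to have an $A_{\geq4}$-singularity. Type 19 occurs when in addition $b_{12}=0$, which together with $b_{30}=0$ is exactly the $C_3$ condition for $P_{\bf u}$.

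For (iii), the curve $\mathcal C_{4_2}$ is defined in (a4) by vanishing of the discriminant of \eqref{MorseCondition}, i.e.\ $b_{12}=0$, with $b_{30}\neq0$ for Type $4$. This matches the $S_{\geq2}$ condition $b_{30}\neq0$, $b_{12}=0$ for $P_{\bf u}$, so $\mathcal C_{4_2}$ is the $S_2$-curve. On it, (a4) gives Type $4_4$ for $\pi'$ iff $b_{13}-b_{21}a_{12}=0$. We must then check that this expression is, up to a nonzero factor, the $S_3$ condition for $P_{\bf u}$, i.e.\ the vanishing of the coefficient that distinguishes $S_2$ from $S_{\geq3}$ after normalizing the 3-jet. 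This comparison is the main obstacle. I would compute it by completing $P_{\bf u}=(y,x^2+\cdots,y^2+b_{30}x^3+b_{21}x^2y+\cdots)$ to the normal form $(x,y^2,y^3\pm x^{k+1}y)$ with roles of variables swapped. Concretely, I would eliminate $x^2$-multiples using the second component and $y$-dependence via the first, and track the coefficient of the relevant $y^3x$-type term; I expect it to reduce to $b_{13}-a_{12}b_{21}$ modulo $b_{12}=0$.

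For the transversal intersection, the $4_4$-curve of (a3) is given by Equation \eqref{4_4-curve} evaluated at the roots of \eqref{MorseCondition}. Setting $b_{12}=0$ forces the two roots $\pi_1,\pi_2$ to merge into $\pi'$, so the $4_4$-curve meets $\mathcal C_{4_2}$ exactly at the $S_3$-points. Transversality then follows for generic $M$ because the differentials of $b_{12}$ and of the $4_4$ condition (functions of the 3- and 4-jets varying with $p$) are independent, by Montaldi's transversality theorem.
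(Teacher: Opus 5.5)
Your treatment of (i), (ii) and of the identification of $\mathcal C_{4_2}=\{b_{12}=0,\ b_{30}\neq0\}$ with the $S_2$-curve uses the same coefficient dictionary as the paper. The paper's proof writes $b_{21}=0$ at that point, evidently meaning $b_{12}=0$ as you have it. The step you call the main obstacle does go through. Writing $f^1=X^2+\phi(y)$, one gets $x=X-\tfrac{a_{30}}{2}X^2-\tfrac{a_{21}}{2}Xy-\tfrac{a_{12}}{2}y^2+O(3)$. The coefficient of $Xy^3$ in $f^2(x(X,y),y)$ is then $b_{13}-a_{12}b_{21}-\tfrac12 a_{21}b_{12}$. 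Hence for $b_{30}\neq0=b_{12}$ the germ $P_{\bf u}$ is $S_{\ge3}$ exactly when $b_{13}-a_{12}b_{21}=0$. This equals the coefficient $g_{13}=b_{13}+2\lambda a_{12}$ of $P_{\pi'}$ at $\lambda=-b_{21}/2$. The paper only asserts this $S_3$ condition, so carrying out the computation would add something.

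The genuine gap is your final transversality step, and the claim cannot be proved because it is false generically.

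\emph{Why the argument fails.} The ``$4_4$ condition'' you differentiate is \eqref{4_4-curve} evaluated at $\lambda_\pm=\tfrac12\bigl(-b_{21}\pm\sqrt{3b_{12}b_{30}}\bigr)$. This is not a smooth function on $M$ across $b_{12}=0$, so Montaldi's theorem says nothing about independence of differentials there.

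\emph{Why the conclusion is false.} The planes $\pi_1,\pi_2$ are real only where $b_{12}b_{30}\ge0$; on the other side every $\pi\in L$ gives Type $4_2$. So the $4_4$-curve lies in a closed half-neighbourhood bounded by the $S_2$-curve and cannot cross it.

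\emph{Explicit local picture.} Put $s=2\lambda+b_{21}$ and $\tau=b_{13}-a_{12}b_{21}$, so that $g_{13}=\tau+a_{12}s$. Use $(s,\tau)$ as coordinates on $Z=\{s^2=3b_{12}b_{30}\}\subset M\times L$. There the left side of \eqref{4_4-curve} is $27b_{30}^3\tau+Ks$ plus higher-order terms. Here $K=27a_{12}b_{30}^3-18b_{30}^2g_{22}$ with $g_{22}=b_{22}-\tfrac12 b_{21}(2a_{21}+3b_{03})$, which is generically non-zero.
\begin{itemize}
\item This curve in $Z$ is transverse to the fold line $s=0$ of the projection $Z\to M$, $(s,\tau)\mapsto(b_{12},\tau)=(s^2/(3b_{30})+\cdots,\tau)$.
\item Its image in $M$ is therefore a smooth curve $b_{12}=c\,\tau^2+\cdots$ with $c\neq0$.
\item This curve is tangent to the $S_2$-curve at the $S_3$-point. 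It is the same mechanism by which the $B_2$-curve meets the parabolic set at a $P_3(c)$-point.
\item Equivalently, the resultant in $\lambda$ of \eqref{4_4-curve} and \eqref{MorseCondition} has differential proportional to $db_{12}$ there.
\end{itemize}

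So your argument, like the paper's proof (which asserts transversality without justification), only establishes that the closure of the $4_4$-curve passes through the $S_3$-points of the $S_2$-curve. Generically the contact there is tangential, not transversal.
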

\begin{proof}
The proofs of (i) and (ii) follow from the fact that the singularity of $P_{\pi}$ is of Type $11_{5}$ or worse when $b_{30} = 0$, and it is Type 18 and 19 if $b_{30} =b_{21}^2-4b_{40}= 0$, and $b_{30}=b_{12}=0$, respectively. For (iii), the curve $\mathcal C_{4_2}$ coincides with the $S_2$-curve, since the condition $b_{21}=0$ defines $S_{\geq 2}$-singularities of $P_{\bf u}$. In particular, the $S_3$-singularity of $P_{\bf u}$ is an isolated point on this curve, if $b_{30} \neq 0$ and 
\[
b_{12} = b_{13}-b_{21}a_{12}=0.
\]
Substituting these conditions into the criteria for a singularity of Type $4_4$ of $P_{\pi_i}$, considering the plane $\pi_i$ determined by $\lambda=-\frac{b_{21}}{2}$, we obtain that
\[
b_{30} \neq 0, 
\qquad
g_{21}^2-3g_{12}g_{30}=(2\lambda+b_{21})^2=0,
\]
and Equation~\eqref{4_4-curve} is satisfied. 
It follows that the $4_4$-curve (determined by Equation~\eqref{4_4-curve}) intersects the $S_2$-curve transversally at an $S_3$-singularity of $P_{\bf u}$.
\qed
\end{proof}

\begin{remark}
At hyperbolic points, we define the $11_{5}$-curve as the closure of the set of points $p \in M$ for which there exists a projection $P_\pi$ having an $11_5$-singularity or worse at $p$. By Theorem~\ref{theo:hyp}, this curve can equivalently be described as the locus of points where $P_{\pi}$ has a singularity of Type $11_7$, $12$, or $16$.

\end{remark}

\

\subsubsection*{Parabolic case:} Now we consider the case when $p$ is a parabolic point. Similar to the hyperbolic case, 
 we may assume that $M$ is given as in (\ref{Monge}) with $j^2(f_1,f_2)=(xy,y^2)$ with $\alpha=\beta=0$ and $\mu\neq0$ in expression \eqref{Proj1} of $P_{\pi}$. Again, we can take $\mu =1$ and the set $\Pi\subset Gr(2,4)$ 
 contains a line $L$, parametrized by $\lambda$, along which $j^2P_\pi\sim_{\mathcal A^{(2)}} (y,0)$.
In particular, the singular set $\Sigma_\pi$ has a Morse $A_1^\pm$-singularity if and only if 
\begin{equation}\label{MorseConditionPar}
6b_{30}\lambda+(3b_{12}b_{30}-b_{21}^2)
\end{equation}
is non-zero.


\begin{theorem} \label{theo:inf}

Let $M$ and $\Pi$ be as above, and suppose that $p$ is a parabolic point.

\noindent 
\begin{itemize}

\item[(a1)]  For $\pi\in L\backslash\{\pi_1\}$ and away from an isolated point  at $M$ {\rm(}see {\rm(a3)}{\rm)}, the singularity of the map-germ $P_{\pi}$ is of Type $4_{2}^{\pm}$ {\rm(}beaks and lips{\rm)}. In particular, $\Sigma_\pi$ has a Morse $A_1^\pm$-singularity. 
 
\item[(a2)]  The plane $\pi_1$ in {\rm(a1)} is determined by the root of Equation~\eqref{MorseConditionPar}. The singularity of $P_{\pi_1}$, $i=1$, is of Type $4_{3}$ {\rm(}goose{\rm)}. In particular, $\Sigma_\pi$ has $A_{\geq2}$-singularity.  

\item[(a3)] There exists an isolated point at the surface $M$, along which $P_{\pi_1}$ has a singularity of Type $4_{4}$ {\rm(}ugly goose{\rm)}. 

 

 
\item[ (b1)] Away from the plane $\pi_2$, and along an isolated point of the parabolic curve on the surface $M$,
the singularity of $P_{\pi}$ is of Type $11_{5}$ {\rm(}gulls{\rm)}. For $\pi=\pi_2$ the singularity of $P_{\pi}$ is of Type $11_{7}$ (ugly gulls).  
\end{itemize}
\end{theorem}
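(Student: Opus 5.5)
The plan is to mirror the proof of Theorem \ref{theo:hyp}, with the hyperbolic normal form replaced by the parabolic one $j^2(f^1,f^2)=(xy,y^2)$. First I check that the line $L$ is the right locus. The unique asymptotic direction is ${\bf u}=(1,0,0,0)$, since $\Omega$ in \eqref{asymptoticlines} reduces to a multiple of $\alpha_1^2$. The associated binormal is ${\bf w}=(0,0,0,1)$, so $\alpha=\beta=0$ and $\mu=1$. Then I write $P_\pi\sim_{\mathcal A}(y,g_{(0,\lambda,1)}(x,y))$ and compute the low-order coefficients $g_{ij}$ of $g$. The second component is essentially $-f^2+\lambda$ times a correction coming from the first coordinate $y-\lambda f^1$. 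After the source change $y\mapsto y+\lambda f^1$, it becomes $-f^2(x,y+\lambda f^1)$ up to sign. Because $f^1=xy+\dots$, the cross term $2y\cdot\lambda xy$ and the cubic terms of $f^1$ feed $\lambda$-dependent contributions into $g_{21},g_{12},g_{40},g_{31},\dots$, while $g_{30}=b_{30}$ (up to sign) is independent of $\lambda$. The Morse condition for $\Sigma_\pi=\{g_x=0\}$ is the nonvanishing of $g_{21}^2-3g_{12}g_{30}$, and I expect it to reduce to \eqref{MorseConditionPar}. This is linear in $\lambda$ rather than quadratic, because in the parabolic normal form the $\lambda^2$ term of $g_{21}$ drops out. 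That accounts for the single exceptional plane $\pi_1$ instead of the two planes of the hyperbolic case.

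For (a1) and (a2) I apply the recognition criteria of Table 6.1 in \cite{IFRT-book}, exactly as in the hyperbolic proof. If $g_{30}=b_{30}\neq0$ and \eqref{MorseConditionPar}$\neq0$, the germ is of Type $4_2^\pm$ and $\Sigma_\pi$ has an $A_1^\pm$-singularity. Since $b_{30}\neq 0$ generically at a parabolic point, \eqref{MorseConditionPar} has exactly one root $\lambda_1=(b_{21}^2-3b_{12}b_{30})/(6b_{30})$, which defines $\pi_1$. On $\pi_1$ the germ is of Type $4_3$ unless the quantity \eqref{4_4-curve} vanishes, and then $\Sigma_\pi$ has an $A_{\ge2}$-singularity. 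For (a3) I substitute $\lambda_1$ into \eqref{4_4-curve} to obtain a function on $M$ restricted to the parabolic set $\Delta$. The parabolic set is a curve, so one extra condition cuts out isolated points of $\Delta$, and there $P_{\pi_1}$ is of Type $4_4$. Montaldi's transversality theorem \cite{Montaldi2} ensures that a generic $M$ meets this condition transversally, so no worse singularities (such as $4_5$) arise. The reason is that $4_5$ would impose two conditions on the one-dimensional set $\Delta$.

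For (b1) I impose $b_{30}=0$, the $11_{2k+1}$ condition. Together with the defining condition of $\Delta$, this picks out isolated points of the parabolic curve; I expect these to be the points where the $B_2$-curve meets $\Delta$, in line with Proposition \ref{condSingA3}(i). At such a point the conditions $g_{21}\neq0$ and $g_{40}\neq0$ exclude at most finitely many $\lambda$. Away from the root $\pi_2$ of the parabolic analogue of \eqref{11_k}, namely $g_{21}^2g_{50}+4g_{12}g_{40}^2-2g_{21}g_{31}g_{40}=0$, the germ is of Type $11_5$. I expect this analogue to be linear in $\lambda$, for the same reason the Morse condition is, and hence to give a single plane. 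At $\pi=\pi_2$ the germ is of Type $11_7$, provided the order-7 expression $D$ of \eqref{11_7} is nonzero. Since we are already at an isolated point, $D\neq0$ holds generically, so $11_9$ does not occur.

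The main obstacle is the explicit computation of the $g_{ij}$ in the parabolic normal form. I need to confirm that the Morse condition and the $11_7$ equation really become linear in $\lambda$, so that there is exactly one exceptional plane in each case. I also need the genericity counts: each extra equation, together with $\delta=0$, must define isolated points on $M$ and not an empty or non-transverse set. I would do the computation in Maple as the paper does, and justify the counts by codimension arguments via \cite{Montaldi2}.
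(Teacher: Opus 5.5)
Your plan follows the paper's proof. You restrict to the line $L$ ($\alpha=\beta=0$, $\mu=1$) in the normal form $(xy,y^2)$, read off the jets of $g$, and apply the recognition criteria. You then get $\pi_1$ from the linear condition \eqref{MorseConditionPar}, impose $b_{30}=0$ to cut out isolated points of $\Delta$, and get $\pi_2$ from the linear condition \eqref{11_7_par}. Your codimension remarks ruling out $4_5$ and $11_9$ make explicit what the paper leaves implicit.

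Two details in the computation you defer need fixing, because they are what make the single-plane claims true.

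First, here $\Omega=-\alpha_2^2$, not a multiple of $\alpha_1^2$; that is why ${\bf u}=(1,0,0,0)$.

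Second, $\lambda$ does not enter $g_{21}$ or $g_{40}$. From $y=Y+\lambda f^1=Y+\lambda xY+\lambda a_{30}x^3+\cdots$ one gets, up to an overall sign, $g_{30}=b_{30}$, $g_{21}=b_{21}$, $g_{12}=b_{12}+2\lambda$, $g_{40}=b_{40}$, $g_{31}=b_{31}+(2a_{30}+b_{21})\lambda$ and $g_{50}=b_{50}+a_{30}b_{21}\lambda$. So $g_{21}^2-3g_{12}g_{30}$ is linear because $\lambda$ sits only in $g_{12}$, not because a $\lambda^2$ term of $g_{21}$ drops out. In the hyperbolic case it is $g_{21}=b_{21}+2\lambda$ that makes the condition quadratic. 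Likewise $g_{21}^2g_{50}+4g_{12}g_{40}^2-2g_{21}g_{31}g_{40}$ is affine in $\lambda$, which reproduces \eqref{11_7_par} exactly.

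Consequently, in (b1) the conditions $g_{21}\neq0$ and $g_{40}\neq0$ exclude no planes at all. They are just the generic conditions $b_{21}b_{40}\neq0$ at the isolated points. Your ``at most finitely many'' is not enough here. Any excluded value of $\lambda$ would be an extra exceptional plane, an analogue of $\pi_3,\pi_4,\pi_5$ in Theorem \ref{theo:hyp}, and the statement ``away from $\pi_2$'' would then fail.
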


\begin{proof}
(a1) 
We consider $P_{\pi}(x,y)\sim_{\mathcal A} (y,g_{(0,\lambda,1)}(x,y)),$ then
$$
j^3g=b_{30}x^3+b_{21}x^2y+(b_{12}+2\lambda)xy^2+b_{03}y^3.
$$
If we denote by $g_{ij}$ the coefficient of $x^{i}y^i$ in $g$, then the condition for $g$ to have a lips/beaks singularity is 
$$
\left\{
\begin{array}{l}
g_{30}=b_{30} \neq 0\\ 
g_{21}^2-3g_{12}g_{30}=-6b_{30}\lambda+(b_{21}^2-3b_{12}b_{30})\neq0.
\end{array}
\right.
 $$
Thus, as $b_{30}\neq0$, then away from a plane $\pi_1$ determined by $\lambda=\frac{(b_{21}^2-3b_{12}b_{30})}{6b_{30}}$, the singularity is of Type $4_2^\pm$.  In particular, under this condition, $\Sigma_\pi$ has a Morse singularity of type $A_1^\pm$.

\noindent (a2) When $\lambda=\frac{(b_{21}^2-3b_{12}b_{30})}{6b_{30}}$, the singularity of $P_{\pi_1}$ is of Type $4_3$ (goose) unless 
$$
\begin{array}{l}
54a_{12}b_{12}b_{30}^3-18a_{12}b_{21}^2b_{30}^2-36a_{21}b_{12}b_{21}b_{30}^2+12a_{21}b_{21}^3b_{30}+18a_{30}b_{12}b_{21}^2b_{30}\\-6a_{30}b_{21}^4
+81b_{03}b_{12}b_{30}^3-27b_{03}b_{21}^2b_{30}^2-9b_{12}^2b_{21}b_{30}^2+3b_{12}b_{21}^3b_{30}-54b_{13}b_{30}^3\\-18b_{21}^2b_{30}b_{31}
+36b_{21}b_{22}b_{30}^2-4b_{21}^2b_{40}=0.
\end{array}
$$ 
\noindent (a3) In this case, there is an isolated point on $M$  where the singularity of $P_{\pi_1}$ is $4_4$ if 
$$
\begin{array}{l}
81g_{14}g_{30}^5+5g_{21}^4g_{30}g_{50}-12g_{21}^4g_{40}^2-12g_{21}^3g_{30}^2g_{41}+36g_{21}^3g_{30}g_{31}g_{40}-36g_{21}^2g_{22}g_{30}^2g_{40}\\
+27g_{21}^2g_{30}^3g_{32}-27g_{21}^2g_{30}^2g_{31}^2+54g_{21}g_{22}g_{30}^3g_{31}-54g_{21}g_{23}g_{30}^4-27g_{22}^2g_{30}^4\neq0.
\end{array}
$$

\noindent (b1) Now the singularity of $P_{\pi}$ is of Type $11_{2k+1}$ when $g_{30} =b_{30}=0$, 
$g_{21}=b_{21}\neq0$, $g_{40}=b_{40}\neq0.$
%
If $g_{21}^2g_{50}+4g_{12}g_{40}^2-2g_{21}g_{31}g_{40}\neq0$, i.e., 
\begin{equation}\label{11_7_par}
\begin{array}{l}
(b_{21}^2-4b_{40})(a_{30}b_{21}-2b_{40})\lambda+
4b_{12}b_{40}^2+b_{21}^2b_{50}-2b_{21}b_{31}b_{40}\neq0,
\end{array}
\end{equation}
then away from a plane $\pi_2$ (determined by $\lambda$), and along the an isolated point on $M$ ($b_{30}=0$), the singularity of $P_{\pi}$ is of Type $11_{5}$ (gulls).  When Equation (\ref{11_7_par}) is zero, the singularity of $P_{\pi_2}$ is of Type $11_{7}$ (ugly gulls) if Equation (\ref{11_7}) is non-zero. 





\qed
\end{proof}

\

Again following from the duality in \cite{bruce-nogueira} the orthogonal projection of $M$ in Monge form as (\ref{Monge}) at a parabolic point $p$ along the asymptotic direction ${\bf u}  = (1,0,0,0)$, so  
the $j^2 P_{\bf u}(x,y)$ is $\mathcal{A}^{(2)}$-equivalent to
$\bigl(y, xy, 0 \bigr).$ The singularity of $P_{\bf u}$ is of type $H_k$, $k=2,3$, if and only if $b_{30} \neq 0$, and of type $P_3(c)$ if $b_{30} = 0$. Furthermore, 
as the $H_2$-curve (defined in Section \ref{section2}) coincides with the parabolic set $\Delta$, then we have the following.

\begin{proposition}\label{CondP3}
Let $M$ be a generic surface in $\mathbb{R}^4$. The parabolic set (or $H_2$-curve) is the set of points where the projection $P_{\pi}$ has singularities of Type $4_2^\pm$ and $4_3$. The $P_3(c)$-singularity of $P_{\bf u}$ is related to the occurrence of singularities of Type $11_5$ and $11_7$ of $P_{\pi}$.
\end{proposition}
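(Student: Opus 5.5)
The plan is to mirror the proof of Proposition \ref{condSingA3}, reading the conditions of Theorem \ref{theo:inf} against the recognition criteria for $P_{\bf u}$ at a parabolic point. Take $M$ in Monge form (\ref{Monge}) with $j^2(f^1,f^2)=(xy,y^2)$. The unique asymptotic direction is then ${\bf u}=(1,0,0,0)$ and the associated binormal direction is ${\bf w}=(0,0,0,1)$, which are exactly the choices $\alpha=\beta=0$, $\mu=1$ defining the line $L\subset\Pi$.

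\textbf{Step 1: the parabolic set.} Along $L$ we have $j^2P_\pi\sim_{\mathcal A^{(2)}}(y,0)$, and Theorem \ref{theo:inf} (a1)--(a2) show that for $b_{30}\neq 0$ the germ $P_\pi$ has Type $4_2^\pm$ for $\pi\neq\pi_1$ and Type $4_3$ for $\pi=\pi_1$, with $\pi_1$ given by the root of (\ref{MorseConditionPar}). On the other hand, $j^2P_{\bf u}\sim(y,xy,0)$, and $P_{\bf u}$ has an $H_k$-singularity precisely when $b_{30}\neq0$. Since the $H_2$-curve coincides with $\Delta$ (\cite{bruce-nogueira}), the generic points of $\Delta$, namely those with $b_{30}\neq 0$, are exactly the points where some $P_\pi$, $\pi\in L$, has Type $4_2^\pm$ or $4_3$. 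The closure then gives the whole parabolic set, because the exceptional points where $b_{30}=0$ are isolated on $\Delta$. I would also check that the condition in (a2) defining Type $4_4$ involves a further jet coefficient, so that for generic $M$ it only removes isolated points and does not change the closure.

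\textbf{Step 2: the $P_3(c)$ points.} When $b_{30}=0$, $P_{\bf u}$ has a $P_3(c)$-singularity, and this happens at isolated points of $\Delta$: generically $b_{30}=0$ and $\delta=0$ meet transversally, and \cite{BruceTari} describes these as the tangency points of the $B_2$-curve with $\Delta$. By Theorem \ref{theo:inf} (b1), the same condition $g_{30}=b_{30}=0$, together with $g_{21}=b_{21}\neq0$ and $g_{40}=b_{40}\neq0$, is exactly the condition for $P_\pi$ to be of Type $11_{2k+1}$. Moreover, it is Type $11_5$ away from $\pi_2$ and Type $11_7$ at $\pi=\pi_2$, where (\ref{11_7_par}) vanishes and $D\neq0$. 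So the two loci are cut out by the same equation $b_{30}=0$ on $\Delta$, which gives the stated relation.

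\textbf{Main obstacle.} The delicate point is genericity. I need to confirm that the open conditions $b_{21}\neq0$, $b_{40}\neq0$ and $D\neq 0$ hold at the $P_3(c)$ points for a generic surface. I also need to check that the excluded modulus values $c\neq 0,\frac12,1,\frac32$ of $P_3(c)$ do not correspond to these coefficient conditions. For the first, I would use Montaldi's transversality theorem (\cite{Montaldi2}): adding any of these equalities to $b_{30}=\delta=0$ imposes one more condition on the jet, so such points do not occur generically. For the second, I would compute the modulus $c$ in terms of $b_{21},b_{40},\dots$ from the normal form reduction of $P_{\bf u}$. If that computation becomes unwieldy, I would settle for stating the correspondence at points where both genericity conditions hold, which is consistent with the word ``related'' in the statement.
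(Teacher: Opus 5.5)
Your proposal is correct and is essentially the paper's argument. In the normal form $j^2(f^1,f^2)=(xy,y^2)$, the single coefficient $b_{30}$ separates $H_k$ from $P_3(c)$ for $P_{\bf u}$, and, for $\pi\in L$, it separates Types $4_2^\pm$ and $4_3$ from Types $11_5$ and $11_7$ for $P_\pi$; this is the paper's one-line proof. Your closure and transversality remarks (isolated $4_4$ and $P_3(c)$ points on $\Delta$; the open conditions $b_{21}\neq0$, $b_{40}\neq0$, $D\neq0$) just make explicit the genericity the paper leaves implicit, but phrase that step carefully: it is $b_{30}$ restricted to $\Delta$ that vanishes transversally, whereas the $B_2$-curve itself is tangent to $\Delta$ at these points.
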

\begin{proof}
The proof follows from the fact that at a parabolic point $p$, the singularity of $P_{\pi}$ is of Type $4^\pm_4$ or worse when $b_{30} \neq 0$, and it is of Type $11_5$ or worse if $b_{30} = 0$.\qed
\end{proof}

\begin{remark}
At parabolic points, the $H_3$-singularity of $P_{\bf u}$ is in general not related to the singularities of $P_{\pi}$.
\end{remark}

We summarize the Proposition \ref{condSingA3} and \ref{CondP3} in Figure \ref{fig:NewSpecialC}.

\begin{figure}[htb]
\includegraphics[ width=8.5cm, height=5.5cm]{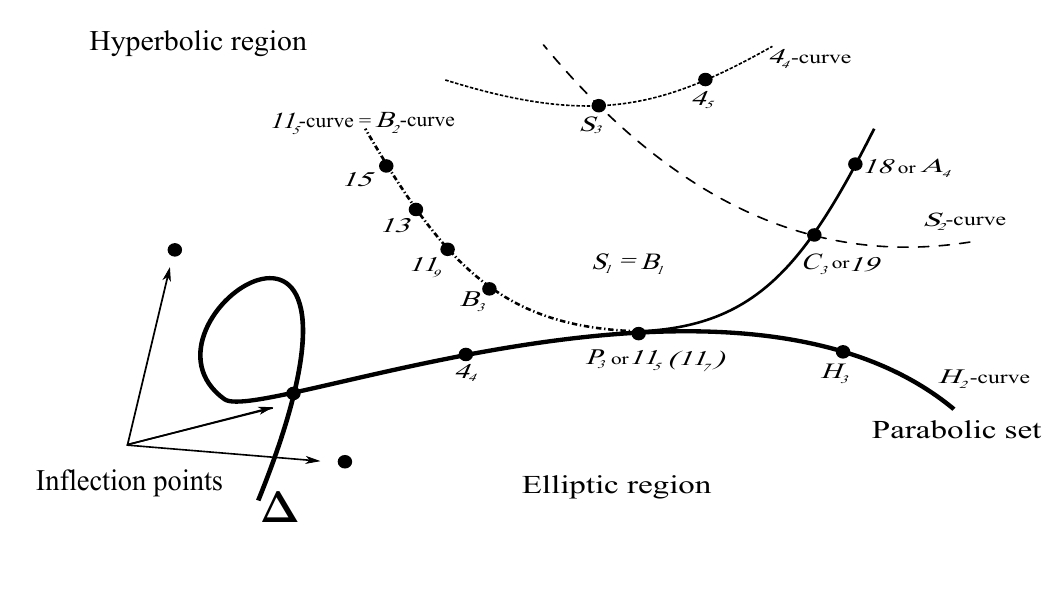}
\hspace{-1cm}
\caption{Curves and special points on $M$ via contact with 2-planes. In Figure \ref{fig:SpecialC}, we have added the $4_4$-curve to the hyperbolic region, together with the isolated points corresponding to singularities of Type $11_9$, $13$, $15$, $18$ and $4_5$. In the parabolic set $\Delta$, we have added the isolated point corresponding to a singularity of Type $4_4$.}
\label{fig:NewSpecialC}
\end{figure}

\subsubsection*{Inflection case:} Now we consider the case when $p$ is an inflection point. There are two types of inflection: imaginary type, where the pair of quadratics $(Q_1, Q_2)$
is equivalent to $(x^2 + y^2,0)$ (parabolic set is an isolated point); and the real type, when $(Q_1, Q_2)$ is equivalent to $(xy,0)$ (parabolic set is a crossing). We shall treat the cases separately. 
Since all directions in the tangent plane to $M$ at such a point are asymptotic, we may take ${\bf u}=(1,\alpha,0,0)$ as an asymptotic direction.

Here we can consider $\mu=1$ in Equation \eqref{Proj1} of $P_{\pi}$. 
In this cases, the set $\Pi\subset Gr(2,4)$ 
can be parametrised by $(\alpha,\beta,\lambda)\in \mathbb R^3$.

\begin{theorem}\label{theo:Inf}
Let $M$ and $\Pi$ be as above, and $p$ an inflection point. 
%
\begin{itemize}
\item[(A)] Suppose $j^2(f_1,f_2)=(x^2+ y^2,0)$.
\begin{itemize}
\item[(a1)] For $\pi \in \Pi$ and away from a surface $\mathcal S_1 \subset \Pi$, the singularity is of Type 2 {\rm (}fold{\rm)}. In particular, $\Sigma_\pi$ is a regular curve. 

\item[(b1)]  For $\pi \in \mathcal S_1$, except along the two curves $\mathcal C_1$ and $\mathcal C_2$, determined by two special directions in $T_pM$, the singularity of $P_{\pi}$ is of Type $4_2^\pm$ (lips/beaks).

\item[(b2)]  For $\pi\in\mathcal C_k$, $k=1,2$ and away from a plane $\pi_1$,  the singularity of $P_{\pi}$ is of Type $4_{3}$ {\rm(}goose{\rm)}. For $\pi_1\in\mathcal C_k$, the singularity of $P_{\pi_1}$ is of Type $4_4$.

 \item[ (c1)] For $\pi$ along the three curves $\mathcal C_k$, $k=3,4,5$ on $\Pi$, determined by three directions in $T_pM$, and away from three planes $\pi_i$, $i=2,3,4$, the singularity of $P_{\pi}$ is of Type $11_{5}$.

\item[(c2)] For $\pi=\pi_2$ or $\pi=\pi_3$, the singularity of $P_{\pi}$ is of Type $11_{7}$ (ugly gulls).  
For $\pi=\pi_4$ the singularity of $P_{\pi}$ is of \mbox{Type $12$}.


\end{itemize}
\item[(B)] Suppose $j^2(f_1,f_2)=(xy,0)$.
\begin{itemize}
\item[(a1)] For $\pi \in \Pi$ and away from union of two surfaces $\mathcal S_2, \mathcal S_3 \subset \Pi$, the singularity of the map-germ $P_{\pi}$ is of Type 2 {\rm (}fold{\rm)}.

\item[(a2)]  For $\pi\in \mathcal S_2\backslash \mathcal S_3$ and away from a curve $\mathcal C$ on $\mathcal S_2$, the singularity of $P_{\pi}$ is of Type 3 {\rm (}cusp{\rm)}. 

\item[(a3)] For $\pi \in \mathcal C$ except an exceptional plane $\pi_1$, the singularity of $P_{\pi}$ is of Type 5 (swallowtail).

\item[(a4)] For $\pi_1 \in \mathcal C$, the singularity of $P_{\pi}$ is of Type 6 (butterfly).

\item[(b1)]  For $\pi \in \mathcal S_3$, except along the two curves $\mathcal C_1$ and $\mathcal C_2$, determined by two directions in $T_pM$, the singularity of $P_{\pi}$ is of Type $4_2^\pm$ (lips/beaks).

\item[(b2)]  For $\pi\in\mathcal C_k$, $k=1,2$ and away from a plane $\pi_1$,  the singularity of $P_{\pi}$ is of Type $4_{3}$ {\rm(}goose{\rm)}. For $\pi_1\in\mathcal C_k$, the singularity of $P_{\pi_1}$ is of Type $4_4$.

 \item[ (c1)] For $\pi$ along the three curves $\mathcal C_k$, $k=3,4,5$ on $\Pi$, determined by three directions in $T_pM$, and away from a plane $\pi_i$, $i=2,3,4$, the singularity of $P_{\pi}$ is of Type $11_{5}$.

\item[(c2)] For $\pi=\pi_2$ or $\pi=\pi_3$, the singularity of $P_{\pi}$ is of Type $11_{7}$ (ugly gulls).  
For $\pi=\pi_4$ the singularity of $P_{\pi}$ is of \mbox{Type $12$}.
\end{itemize}
\end{itemize}
\end{theorem}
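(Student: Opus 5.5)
We follow the scheme of Theorems \ref{theo:notInf} and \ref{theo:hyp}: write $P_\pi\sim_{\mathcal A}(y,g(x,y))$ as in \eqref{Proj1} with $\mu=1$ and $\pi$ parametrised by $(\alpha,\beta,\lambda)$. Then compute the jets of $g$ and apply the recognition criteria for the types in Table \ref{tab:rieger} (Table 6.1 of \cite{IFRT-book}), treating the cases (A) $j^2(f^1,f^2)=(x^2+y^2,0)$ and (B) $j^2(f^1,f^2)=(xy,0)$ separately. Reparametrise the source by $x\mapsto x$, $y\mapsto y+\alpha x$. Then $g$ is, up to a linear change, $\beta f^1-f^2$ composed with this shear, corrected by the $\lambda$-term from the first component. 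Since $f^2$ has no quadratic part, the 2-jet of $g$ is $\beta\, j^2f^1(x,y+\alpha x)$ minus an $x$-independent term.

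\textbf{Case (A).} The coefficient of $x^2$ in $j^2g$ is $\beta(1+\alpha^2)$. So $P_\pi$ is a fold off $\mathcal S_1=\{\beta=0\}$, and $\Sigma_\pi$ is regular by Proposition \ref{prop:class}(i); this gives (a1). On $\mathcal S_1$ the whole 2-jet of $g$ vanishes apart from pure $y$-terms, so $j^2P_\pi\sim(y,0)$, consistent with Proposition \ref{prop:class}(iii). The cubic part is then $j^3g=-j^3f^2(x,y+\alpha x)+\lambda(\ldots)$, with the $\lambda$-contribution entering only through $x^2y$ and $xy^2$ terms coming from $f^1$.

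We then run the $4_k$ and $11_{2k+1}$ criteria of the proof of Theorem \ref{theo:hyp}:
\begin{itemize}
\item[(b1)] Type $4_2^\pm$ needs $g_{30}\neq0$ and $g_{21}^2-3g_{12}g_{30}\neq0$.
\item[(b2)] The second expression, viewed as a polynomial in $(\alpha,\lambda)$, vanishes along curves $\mathcal C_1,\mathcal C_2$. We expect a quadratic in $\lambda$ whose roots, or degenerate loci, are governed by two special directions $\alpha$. On these curves Equation \eqref{4_4-curve} singles out $\pi_1$, giving Types $4_3$ and $4_4$.
\item[(c1)] Next, $g_{30}=-f^2_{3}(1,\alpha)$ is the cubic form of $f^2$ evaluated in the direction $(1,\alpha)$. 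Its vanishing gives up to three real directions, hence the three curves $\mathcal C_3,\mathcal C_4,\mathcal C_5$ (lines in $\lambda$ over fixed $\alpha$). There the $11_5$ condition $g_{21}^2g_{50}+4g_{12}g_{40}^2-2g_{21}g_{31}g_{40}\neq0$ holds away from the planes $\pi_2,\pi_3$.
\item[(c2)] Where that expression vanishes we get $11_7$ at $\pi_2,\pi_3$, using the nonvanishing of $D$ in \eqref{11_7}. Where $g_{40}=0$ we get Type 12 at $\pi_4$, using $2g_{21}g_{60}-5g_{31}g_{50}\neq0$.
\end{itemize}

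\textbf{Case (B).} Here $j^2g=\beta(\alpha x^2+xy)$ up to $y^2$-terms. The $x^2$ coefficient $\beta\alpha$ vanishes on the union $\mathcal S_2\cup\mathcal S_3$ with $\mathcal S_2=\{\alpha=0\}$ and $\mathcal S_3=\{\beta=0\}$. On $\mathcal S_2\setminus\mathcal S_3$ the 2-jet is $\sim(y,xy)$, so we follow the cascade of Theorem \ref{theo:notInf}. The coefficient of $x^3$ is affine in $\lambda$ and cuts out $\mathcal C$, giving cusp off $\mathcal C$. On $\mathcal C$ the $x^4$ coefficient vanishes at a single plane $\pi_1$, giving swallowtail elsewhere and butterfly at $\pi_1$ when the $x^5$ coefficient and the Type 7 expression are nonzero. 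On $\mathcal S_3$ we get $j^2P_\pi\sim(y,0)$, and the argument from case (A) applies verbatim with $j^2f^1=xy$ to give (b1)–(c2).

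\textbf{Main obstacle.} The hard part is not the Maple jet computation but checking genericity. For (A)(c1) and (B)(c1) one needs the cubic $f^2_3(1,\alpha)$ to have three real roots, or the statement read as ``up to three''. One also needs the relevant resultants, e.g.\ of $g_{21}^2-3g_{12}g_{30}$ with \eqref{4_4-curve} and of the $11_5$ condition with $g_{40}$, to be nonzero at a generic inflection point. Since inflection points are isolated on a generic $M$, the conditions defining $\pi_1,\ldots,\pi_4$ are open conditions on the 4- and 5-jets of $(f^1,f^2)$. By Montaldi's transversality theorem \cite{Montaldi2}, each of them holds for a residual set of surfaces, and this is how we will justify the exceptional planes and directions.
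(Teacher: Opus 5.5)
Your overall route is the paper's: set $\mu=1$, compute the jets of $g$, and apply the recognition criteria on $\mathcal S_1$, $\mathcal S_2$ and $\mathcal S_3$. However, your description of how $\lambda$ enters the low-order jets is wrong, and the geometric counts in the statement rest on exactly that structure.

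\textbf{Case (A).} On $\mathcal S_1=\{\beta=0\}$ the projection is $(y-\alpha x-\lambda f^1,\,-f^2)$. Since $j^2f^2=0$ at an inflection point, $g(x,Y)=-f^2\bigl(x,\,Y+\alpha x+\lambda f^1+\cdots\bigr)$, and the order-two correction $\lambda f^1$ only reaches degree $\ge 4$ in $g$. So $j^3g=-f^2_3(x,Y+\alpha x)$ does not depend on $\lambda$ at all; there is no $\lambda$ in $g_{21}$ or $g_{12}$, contrary to your claim. The parameter $\lambda$ first appears, affinely, in the quartic terms through $-\lambda\,\partial_yf^2_3\cdot f^1_2$. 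For example, $g_{40}$ has $\lambda$-coefficient $-(1+\alpha^2)(b_{21}+2b_{12}\alpha+3b_{03}\alpha^2)$.

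This structure is what makes the statement true:
\begin{itemize}
\item[(1)] $g_{21}^2-3g_{12}g_{30}$ is a quadratic in $\alpha$ alone. Hence $\mathcal C_1,\mathcal C_2$ are the $\lambda$-lines $\{\beta=0,\ \alpha=\alpha_k\}$, which is why they are ``determined by two directions''.
\item[(2)] Along each such line the $4_3$-degeneracy condition \eqref{4_4-curve} is affine in $\lambda$, giving exactly one $\pi_1$.
\item[(3)] On $\mathcal C_3,\mathcal C_4,\mathcal C_5$, the $11_5$ condition $\Lambda_2$ is quadratic in $\lambda$ and $g_{40}$ is affine in $\lambda$. This is where the count $\pi_2,\pi_3$ and $\pi_4$ comes from.
\end{itemize}
With your ``quadratic in $\lambda$'' picture, the zero set would be a pair of graphs over the $\alpha$-axis rather than fibres over two directions. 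Neither the description of $\mathcal C_1,\mathcal C_2$ nor the uniqueness of $\pi_1$ would then follow.

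\textbf{Case (B).} The same mistake occurs on $\mathcal S_2$, where you transplant the cusp condition \eqref{cuspcondition} from Theorem~\ref{theo:notInf}. There, $\lambda$ entered $g_{30}$ because the quadratic part of the first component was nonzero in the direction $(1,\alpha)$. Here $\alpha=0$ and $f^1_2(1,0)=0$, so along $Y=0$ the implicit solution satisfies $y=O(x^3)$. Consequently $g_{30}=a_{30}\beta-b_{30}$ is independent of $\lambda$, and $\mathcal C=\{\alpha=0,\ \beta=b_{30}/a_{30}\}$ is a line parametrised by $\lambda$. On this line $g_{40}$ is proportional to $a_{30}b_{30}\lambda-(a_{30}b_{40}-a_{40}b_{30})$, which is linear in $\lambda$; this yields the single plane $\pi_1$, assuming $a_{30}b_{30}\neq0$. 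If you instead solved the $x^3$-coefficient for $\lambda$ as proposed, the $x^4$-coefficient along $\mathcal C$ would be a function of $\beta$, with no control on the number of its zeros.

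Once the $\lambda$-dependence is corrected, your (c1)--(c2) discussion and your genericity argument go through as in the paper. The latter rests on imposing finitely many open conditions on higher jets at isolated inflection points.
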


\begin{proof}
$(A)$ We may assume that
$P_{\pi}(x,y)\sim_{\mathcal A} (y, g_{(\alpha,\beta,\lambda)}(x,y)).$
The $2$-jet of $g$ is given by
\[
j^2g=\beta(\alpha^2+1)\,x^2 + 2\beta\alpha\,xy + \beta\,y^2.
\]
Since $\alpha^2+1\neq 0$, the singularity of $P_{\pi}$ is a fold unless $\beta=0$. This condition defines the surface $\mathcal S_1$ in $\Pi$. In this case, $j^2g\equiv 0$, and hence
we have
\begin{eqnarray*}
j^3g&=&(-\alpha^3 b_{03} - \alpha^2 b_{12} - \alpha b_{21} - b_{30})\,x^3
+ (-3\alpha^2 b_{03} - 2\alpha b_{12} - b_{21})\,x^2 y\\
&&+ (-3\alpha b_{03} - b_{12})\,x y^2
- b_{03}\,y^3.
\end{eqnarray*}
If we denote by $g_{ij}$ the coefficient of $x^{i}y^i$ in $g$, then the condition for $P_{\pi}$ to have a lips/beaks singularity is 
$$
\left\{
\begin{array}{l}
g_{30}=(b_{03}\alpha^3  + b_{12}\alpha^2  + b_{21}\alpha  + b_{30}) \neq 0\\ 
g_{21}^2-3g_{12}g_{03}=
(b_{12}^2-3 b_{03} b_{21})\,\alpha^2
+ (-9 b_{03} b_{30} + b_{12} b_{21})\,\alpha+b_{21}^2
- 3 b_{12} b_{30}  
\neq0.
\end{array}
\right.
$$
Thus, the condition for $P_\pi$ to have a $4_2^-$ (resp. $4_2^+$) singularity is $g_{30}\neq 0$ and $g^2_{21}-3g_{12}g_{30}>0$ (resp. $<0$).
Therefore, away from two curves $\mathcal C_1$ and $\mathcal C_2$ in $\Pi$ determined by zeros of 
$$
\Lambda_1(\alpha)=(-3 b_{03} b_{21} + b_{12}^2)\,\alpha^2
+ (-9 b_{03} b_{30} + b_{12} b_{21})\,\alpha
- 3 b_{12} b_{30} + b_{21}^2,$$ the singularity is of Type $4_2^\pm$. 

Generally, the discriminant of $\Lambda_1(\alpha)$ is non-zero, then if  the discriminant of $\Lambda_1(\alpha)>0$, there exist two curves $\mathcal C_k$, $k=1,2$, on $\mathcal S_1$ determined by two directions in $T_pM$ such that the singularity of $P_{\pi}$ for $\pi\in \mathcal C_i$ is Type $4_3$ (goose) unless 
$$
27g_{13}g_{30}^3+9g_{21}^2g_{30}g_{31}-18g_{21}g_{22}g_{30}^2-4g_{21}^2g_{40}=0.
$$ 
In this case, the above expression is an equation in variable $\lambda$ of order $1$, then there is an isolated point $\pi_1\in\mathcal C_i$ where the singularity of $P_{\pi_1}$ is $4_4$ if 
$$
\begin{array}{l}
81g_{14}g_{30}^5+5g_{21}^4g_{30}g_{50}-12g_{21}^4g_{40}^2-12g_{21}^3g_{30}^2g_{41}+36g_{21}^3g_{30}g_{31}g_{40}-36g_{21}^2g_{22}g_{30}^2g_{40}\\
+27g_{21}^2g_{30}^3g_{32}-27g_{21}^2g_{30}^2g_{31}^2+54g_{21}g_{22}g_{30}^3g_{31}-54g_{21}g_{23}g_{30}^4-27g_{22}^2g_{30}^4\neq0.
\end{array}
$$

Now the singularity of $P_{\pi}$ is of Type $11_{2k+1}$ when 
$$
\left\{
\begin{array}{l}
g_{30} =(\alpha^3 b_{03} + \alpha^2 b_{12} + \alpha b_{21} + b_{30})=0;\\ 
g_{21}=(-3\alpha^2 b_{03} - 2\alpha b_{12} - b_{21})\neq0;\\ 
g_{40}=-(\alpha^2 + 1)\,(3\alpha^2 b_{03} + 2\alpha b_{12} + b_{21})\,\lambda
- b_{04}\alpha^4
- b_{13}\alpha^3
- b_{22}\alpha^2
- b_{31}\alpha
- b_{40}\neq0.
\end{array}
\right.
 $$
In this case, according to the value of the discriminant of the cubic equation $g_{30}=0$, there may exist up to three special curves $\mathcal C_k$, $k=3,4,5$, on $\Pi$, determined by three directions in $T_pM$.


If $\Lambda_2(\lambda)=g_{21}^2g_{50}+4g_{12}g_{40}^2-2g_{21}g_{31}g_{40}=D_1\lambda^2+D_2\lambda+D_3\neq0$, then away from three planes $\pi_i\in\mathcal C_k$, $i=2,3,4$ (determined by   $\Lambda_2(\lambda)=0$ and $g_{40}=0$), the singularity of $P_{\pi}$ is of Type $11_{5}$ (gulls).  When $\Lambda_2(\lambda)=0$, the singularity of $P_{\pi_i}$, $i=2,3$ is of Type $11_{7}$ (ugly gulls) if Equation (\ref{11_7}) is not zero.
 
When $g_{40}=0$, there exists a plane $\pi_4$  where the singularity of $P_{\pi_4}$, is of Type $12$ if $2g_{21}g_{60}-5g_{31}g_{50}\neq0$. 
The case when $g_{30}=g_{21}=0$ and $g_{12}\neq0$ is not generic.  

$(B)$ If $j^2(f_1,f_2)=(xy,0)$ then the $2$-jet of $g$ is given by
\[
j^2g=\alpha\beta\,x^2 + \beta\,xy.
\]
 The singularity of $P_{\pi}$ is a fold unless $\alpha\beta=0$. This condition defines two surfaces, $\mathcal S_2$ and $\mathcal S_3$, in $\Pi$. We analyze separately the cases $\alpha=0$ (surface $\mathcal S_2$) and $\beta=0$ (surface $\mathcal S_3$). We focus on the first case, as the second is analogous and the statements are equivalent to those in case $(A)$.

(a2) On $\mathcal S_2$ with $\beta\neq0$, the $j^2g$ is $xy$, so we get singularities in Table \ref{tab:rieger} with a 2-jet 
$\mathcal A^{(2)}$-equivalent to $(y,xy)$. 
For $\pi\in \mathcal S_2$, the singularity is a cusp (Type 3) unless the coefficient of $x^3$ in the Taylor expansion of $g$ is zero, that is, 
$a_{30}\beta-b_{30}=0$.
If this happens, generically
we can solve for $\beta$ on $\mathcal S_2$ and get a curve $\mathcal C\subset \mathcal S_2$, parametrised by $\displaystyle(0, b_{30}/a_{30},\lambda)$, where the
singularity of $P_{\pi}$ is more degenerate than cusp.

(a3) For $\pi\in \mathcal C$, the singularity is a swallowtail (Type 5) unless the coefficient of $x^4$ in the Taylor expansion of 
$g$ is zero, that is, $a_{30}b_{30}\lambda-(a_{30}b_{40}-a_{40}b_{30})=0$.
When $a_{30}b_{30}\neq0$, then there exists a solution $\lambda$ which determines a plane $\pi_1$ on $\mathcal C$. 

(a4)  Generically the singularity of $P_{\pi_1}$ is of Type 6  when the coefficients 
$$
a_{50}b_{30}^2-a_{30}b_{30}b_{50} + (a_{30}b_{40}-a_{40}b_{30})(a_{21}b_{30}-a_{30}b_{21}+b_{40})
$$ 
of $x^5$ in the Taylor expansion of $g$ is not zero and  denoting by $g_{ki}$ the coefficient of $x^{k}y^i$ 
in the Taylor expansion of $g$,
\begin{equation*}
\label{eq:Type7}
(8g_{50}g_{70}-5g_{60}^2)g_{11}+2g_{50}(g_{21}g_{60}-20g_{31}g_{50})g_{11}+35g_{21}^2g_{50}^2\neq0.
\end{equation*}

\qed

\end{proof}

\begin{remark}
Note that, on $\mathcal S_2 \cap \mathcal S_3$ in Theorem \ref{theo:Inf} case {\normalfont (B)}, that is, when $\alpha=\beta=0$, we have $j^2g \equiv 0$. Hence, there exists a line $\mathcal L = \mathcal S_2 \cap \mathcal S_3 \subset \Pi$ such that, for $\pi \in \mathcal L$, the projection $P_\pi$ is $\mathcal{A}$-equivalent to a singularity of type $4_2^\pm$ if $b_{30} \neq 0$ and $b_{21}^2 - 3 b_{12} b_{30} \neq 0$.
\end{remark}

\

\subsection{Case $\pi=T_pM$}

\

We project to the plane $\pi_t$ generated by ${\bf e_3}$ and ${\bf e_4}$. Then 
$P_{\pi}({\bf x})=(x_3,x_4)$
and its restriction to the surface $M$ is the corank 2 map-germ 
\begin{equation}\label{eq:case2}
P_{\pi}(x,y)=(f^1(x,y),f^2(x,y)).
\end{equation}
At the 2-jet level, we have $j^2P_{\pi}(x,y) = (Q_1(x,y),Q_2(x,y))$ and we have the classification in the 2-jet space given by that of pairs of quadratic forms in Table \ref{Gibson}.

This case already been analyzed by Nogueira in \cite{Nogueira} and proves the following.

\begin{theorem}[\cite{Nogueira}]
\leavevmode
\begin{enumerate}
\item[(i)]
At a hyperbolic point of $M$, the projection $P_{\pi}$ is the singularity of type $I^{l,m}_{2,2}$ with $3 \ge l \ge m \ge 1$ (see Table \ref{corank2}). The projection has a singularity of type $I^{1,1}_{2,2}$ at $p$ if and only if the osculating hyperplanes to $M$ at $p$ have exactly $A_2$-contact with $M$ at $p$. If one of these hyperplanes has $A_3$-contact (or worse) with the surface, then the singularity of the projection at $p$ is of type $I^{2,1}_{2,2}$ (or worse).

\item[(ii)]
At an elliptic point, only singularities of type $II^{\,l}_{2,2}$ occur, with $l=1$ or $2$ (see Table \ref{corank2}).

\item[(iii)]
At a parabolic point, the singularities involved are of $\mathcal K$-type $I_{2,m}$ with $m \ge 3$, and these singularities are non-simple.
\end{enumerate}
\end{theorem}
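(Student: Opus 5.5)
The approach is to work with the corank 2 germ \eqref{eq:case2}, $P_\pi=(f^1,f^2)$. We first normalize its 2-jet $(Q_1,Q_2)$ by the action of $\mathcal G=GL(2,\mathbb R)\times GL(2,\mathbb R)$ to the normal forms of Table~\ref{Gibson}. Since $\mathcal G$ is contained in the linear part of $\mathcal A$, this costs nothing. We then use the complete transversal method to recognize higher jets against Table~\ref{corank2} (and against the $\mathcal K$-classification in the parabolic case). The genericity statements then follow from Montaldi's transversality result quoted above: in the corank 2 case the plane $\pi=T_pM$ is determined by $p$, so only conditions of codimension $\le 2$ in $p$ can occur.

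\emph{Hyperbolic case.} Take $j^2P_\pi=(x^2,y^2)$. This $2$-jet is $\mathcal K$-finite of type $I_{2,2}$, so $\mathcal A$-determinacy reduces to analysing the cubic terms modulo the tangent space $T\mathcal A\cdot(x^2,y^2)$. A complete transversal at order $3$ is spanned by $(y^3,0)$ and $(0,x^3)$, so $j^3P_\pi\sim(x^2+b\,y^3,\,y^2+c\,x^3)$, with $b,c$ proportional to $a_{03}$ and $b_{30}$ respectively. When $bc\ne0$ we get $I^{1,1}_{2,2}$. The geometric step is to identify these coefficients with contact of osculating hyperplanes. At a hyperbolic point the binormals are $(0,0,1,0)$ and $(0,0,0,1)$ with asymptotic directions $\partial_y$ and $\partial_x$, and the height functions are $h_{{\bf e}_3}=f^1=x^2+a_{03}y^3+\dots$ and $h_{{\bf e}_4}=f^2=y^2+b_{30}x^3+\dots$. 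So $a_{03}\ne0$ (resp.\ $b_{30}\ne0$) is exactly $A_2$-contact, which gives the first equivalence in (i). If, say, $b_{30}=0$, then $h_{{\bf e}_4}$ is $A_{\ge3}$. We then push to order $5$: the complete transversal gives the term $x^5$ in the second component, and its coefficient, after the coordinate changes, is the $A_3$ condition $b_{40}-\tfrac14 b_{21}^2\neq0$ for $f^2$. This yields $I^{2,1}_{2,2}$ or worse. A codimension count ($l+m\le 4$, with two conditions at most generically on a surface in this setting) bounds $l,m\le3$.

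\emph{Elliptic case.} Take $j^2P_\pi=(xy,x^2-y^2)$. The complete transversal at order $3$ leaves a single term, which we normalize to $(x^3,0)$ (i.e.\ $II^1_{2,2}$). When its coefficient vanishes, which is a codimension-one condition and hence occurs along a curve, we pass to order $5$ and get $II^2_{2,2}$. Any further degeneration has codimension $>4$ and does not occur generically. For the parabolic case, $j^2P_\pi=(xy,y^2)$ is not $\mathcal K$-finite at the $2$-jet level. We show that its $\mathcal K$-type is $I_{2,m}$ with $m\ge3$, determined by the cubic coefficient $b_{30}$ and higher ones. Non-simplicity follows from the known adjacency of these orbits to moduli families (Rieger/Gibson), as in \cite{Nogueira}.

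The main obstacle is the hyperbolic order-$5$ step. There we must track how the order-$4$ terms, after eliminating $b_{30}$ and absorbing terms via $T\mathcal A$, feed into the $x^5$ coefficient, and then show that its vanishing matches exactly the $A_3$-contact condition of the corresponding height function. We would first try the explicit substitution $x\mapsto x+\kappa y^2$ (with $\kappa$ chosen to kill $x^2y$-type cubic terms) and compare the result with the $A_3$ criterion for $h_{\bf w}$ recalled before Proposition~\ref{condSingA3}.
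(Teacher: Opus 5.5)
The paper gives no proof of this theorem; it is quoted from \cite{Nogueira}, so your plan has to stand on its own. The framework is the right one: normalise the $2$-jet by Table~\ref{Gibson}, use complete transversals, and use Montaldi's transversality to get $l+m\le 4$. Your order-$3$ hyperbolic computation is also correct. The transversal is spanned by $(y^3,0)$ and $(0,x^3)$, the coefficients are exactly $a_{03}$ and $b_{30}$, and these are the $A_2$-conditions for $h_{{\bf e}_3}=f^1$ and $h_{{\bf e}_4}=f^2$. Together with your codimension count, this already gives everything claimed in (i). If $b_{30}=0$ the germ is not $I^{1,1}_{2,2}$, and since it has $\mathcal K$-type $I_{2,2}$ and $\mathcal A_e$-codimension at most $4$, it is $I^{l,m}_{2,2}$ with $l\ge 2$, i.e.\ ``$I^{2,1}_{2,2}$ or worse''.

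The order-$5$ step you call the main obstacle, however, aims at a false identity and should be dropped. The $x^4$-term of the second component is not an $\mathcal A$-invariant: since $u=x^2+\dots$, the target change $v\mapsto v-c\,u^2$ removes it. So there is no reason for $b_{40}-\frac14 b_{21}^2$ to govern the $(0,x^5)$-coefficient. That quantity is the $x^4$-coefficient of the critical-value function $F(x)=f^2(x,\psi(x))$, where $f^2_y(x,\psi(x))=0$. In fact the identification fails in both directions:
\begin{itemize}
\item $(x^2+y^3,\,y^2+c\,x^4)$ with $c\ne0$ has $h_{{\bf e}_4}$ exactly $A_3$. Yet the germ is invariant under $x\mapsto -x$, so the branch $y=0$ of its singular set maps two-to-one onto $\{v=cu^2,\ u\ge0\}$, and the germ is not even finitely $\mathcal A$-determined.
\item $(x^2+y^3,\,y^2+x^5)$ is exactly $I^{2,1}_{2,2}$, while $h_{{\bf e}_4}$ is $A_4$.
\end{itemize}
What detects exact $I^{2,1}_{2,2}$ is that the image of the branch of $\Sigma_\pi$ tangent to the $x$-axis is a $(2,5)$-cusp, i.e.\ the first odd power of $\sqrt u$ in the second component is the fifth. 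This involves the $x^5$-coefficient of $F$, corrected by odd-in-$x$ coefficients of $f^1$. It changes sign under $x\mapsto -x$, whereas $b_{40}-\frac14 b_{21}^2$ does not. Along the curve $b_{30}=0$ the two conditions generically fail at different isolated points, so only the implication with ``or worse'' holds.

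There is a similar counting slip in the elliptic case. The order-$3$ complete transversal of $(xy,x^2-y^2)$ is two-dimensional, not one-dimensional. With $z=x+iy$, the $2$-jet is $z^2$ up to linear changes. Then $tf$ contributes $z\cdot\mathrm{span}_{\mathbb C}\{z^2,z\bar z,\bar z^2\}$ and $\omega f$ contributes nothing in degree $3$, so the complement is $\{c\,\bar z^3 : c\in\mathbb C\}$. The residual rotations $z\mapsto e^{i\theta}z$, $w\mapsto e^{-2i\theta}w$ act by $c\mapsto e^{-5i\theta}c$, which is why every $c\neq0$ gives $II^1_{2,2}$. But $II^2_{2,2}$ then requires $c=0$, which is two conditions. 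So it occurs at isolated elliptic points, not along a curve, matching the $\mathcal A_e$-codimensions $2$ and $4$ in Table~\ref{corank2}.

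Neither slip changes the list of types in the statement, but as written your plan asserts, and sets out to prove, claims that are false. In the parabolic case you only cite non-simplicity rather than prove it. That is acceptable here only because the paper itself defers entirely to \cite{Nogueira}.
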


\section{Apparent contours}\label{Apparent contour}

In this section, considering the setting of Section \ref{case1}, we study the apparent contour of the surface $M$ under projection onto the plane $\pi_t$. In this context, we provide a geometric characterization of the surface in terms of the inflection points, vertices, and singularities of the apparent contour when the singular set $\Sigma_\pi$ is a regular curve. To this end, we introduce the necessary concepts from Euclidean differential geometry of surfaces in $\mathbb{R}^4$.

Here we have the induced scalar product of $\mathbb{R}^{4}$ in $T_pM$ and define the second fundamental form of $M$. Given a smooth surface $M\subset\mathbb{R}^{4}$ and $F:U\rightarrow\mathbb{R}^{4}$
a local parametrisation of $M$ with $U\subset\mathbb{R}^{2}$ an open subset, let
${\bf E = \{e_1, e_2, e_3, e_4\}}$ be an orthonormal frame of $\mathbb{R}^{4}$ such that at any $q\in U$,
$ \{{\bf e_{1}}(q),{\bf e_{2}}(q)\}$ is a basis for $T_{p}M$ and $\{{\bf e_{3}}(q),{\bf e_{4}}(q)\}$ is a basis for $N_{p}M$ at $p=F(q)$.

The {\it second fundamental form} of $M$ at a point $p$ is defined by
$II_{p}:T_{p}M\times T_pM^{2}\rightarrow N_{p}M$ given by
$II_p(u,w)=\pi_2(d^2F(u,w))$,
where $\pi_2:T_p\mathbb{R}^{4}\rightarrow N_pM$
is the canonical projection on the normal space. Thus the 
 curvature ellipse can be seen as 
 $\eta(u)=II_p (u,u)$ for $u$ an unit vector in $T_pM$. The {\it second fundamental form of $M^2$ at $p$ along a normal vector field $\nu$} is the bilinear map $II_p^\nu :T_pM^2 \times T_pM^2\to \mathbb{R}$ given by $II_p^{\nu}(v,w)=\langle II_p(v, w),\nu\rangle$.

Taking ${\bf u}=w_{1}{\bf e_{1}}+w_{2}{\bf e_{2}}\in T_pM^{2}$ and $(x,y)$ local coordinates in $\mathbb{R}^2$, 
\[
a=\langle F_{xx},{\bf e_{3}}\rangle,\quad b=\langle F_{xy},{\bf e_{3}}\rangle,\quad c=\langle F_{yy},{\bf e_{3}}\rangle,
\]
\[
l=\langle F_{xx},{\bf e_{4}}\rangle,\quad m=\langle F_{xy},{\bf e_{4}}\rangle,\quad n=\langle F_{yy},{\bf e_{4}}\rangle,
\]
are the {\it coefficients} of the second fundamental form. Then the $II_p$ is a quadratic form which can be written as
\begin{eqnarray*}
II_{p}({\bf u},{\bf u})&=&(aw_{1}^{2}+2bw_{1}w_{2}+cw_{2}^{2}){\bf e_{3}}
+(lw_{1}^{2}+2mw_{1}w_{2}+nw_{2}^{2}){\bf e_{4}}\\
&=& \langle h^1 {\bf u}, {\bf u} \rangle \, {\bf e_{3}} + \langle h^2 {\bf u}, {\bf u} \rangle \, {\bf e_{4}},
\end{eqnarray*}
%
%
%
where
\[
h^1 =
\begin{pmatrix}
a & b \\
b & c
\end{pmatrix},
\quad
h^2 =
\begin{pmatrix}
l & m \\
m & n
\end{pmatrix}.
\]

The map $G^\nu : M \to S^{3}$ given by $G^\nu(p) = \nu(p)$ is
called the {\it Gauss map of $M$ with respect to the normal vector field $\nu$.} The derivative of the Gauss map $G_\nu$ at a point $p$ in $M$ is a linear map
$
(dG^\nu)_p : T_pM \to T_p\mathbb{R}^{4}
= T_pM \oplus N_pM.
$
Let $\pi_1 : T_pM \oplus N_pM \to T_pM$ denote the projection to the first component. The linear map 
\[
W^\nu_p = -\pi_1 \circ (dG^\nu)_p : T_pM \to T_pM.
\]
is called the 
{\it $\nu$-shape operator of $M$ at $p$}. The {\it Lipschitz--Killing curvature of $M$ at $p$ with respect to $\nu$} is defined to be 
$
K^\nu(p) = \det W^\nu_p.
$

Given a vector ${\bf w}=\lambda_1 {\bf e_{3}} + \lambda_2 {\bf e_{4}} \in N_pM$, consider the shape operator
$W^\nu_p$ along any normal vector field with $\nu(p)={ \bf w}$. Then the matrix of $W^\nu_p$ with respect to the basis ${\bf \{e_1, e_2\}}$ is the symmetric matrix
\[
\begin{pmatrix}
\lambda_1 a + \lambda_2 l & \lambda_1 b + \lambda_2 m \\
\lambda_1 b + \lambda_2 m & \lambda_1 c + \lambda_2 n
\end{pmatrix}.
\]
Thus
$$
K^\nu(p) = \det W^\nu_p=(ac-b^2)\lambda_1^2 +(an+cl-2bm)\lambda_1\lambda_2+(ln-m^2)\lambda_2^2$$

Note that the Lipschitz-Killing curvature defines a quadratic form on the normal space. The normal directions for which this form vanishes correspond exactly to the binormal directions of the surface $M$, i.e., $\mathcal B({\bf w})=0$ (see Equation \eqref{binormaleq}).

Consider the case where $\dim(T_pM\cap \pi)=1$. Analogously to the case of surfaces in $\mathbb{R}^3$, we refer to the singular set $\Sigma_\pi$ as the {\it contour generator} of $M$ along $\pi$, and to its image under $P_\pi$ on $\pi_t$ as the {\it apparent contour} $\Delta_{ac}$ of $M$ along $\pi$, that is,
$$
\Delta_{ac}=P_\pi(\Sigma_{\pi}).
$$

With the above notation, and considering Equation~\eqref{Proj1}, the contour generator $\Sigma_\pi$, determined by the equation $g_x(x,y)=0$, can be written as
\begin{eqnarray*}
g_x(x,y)&=&
2\left((a_{02}\alpha^2+a_{11}\alpha+a_{20})\beta-(b_{02}\alpha^2+ b_{11}\alpha+b_{20})\mu\right)x+\\ &&\left((2a_{02}\alpha+a_{11})\beta-(2 b_{02}\alpha+b_{11})\mu\right)y+O_2(x,y)\\
&=& (\langle II_p({\bf u}),{\bf w}\rangle)x+(\langle II_p'({\bf u}),{\bf w}\rangle)y+O_2(x,y),
\end{eqnarray*}
where  $II_p({\bf u}):=II_p({\bf u},{\bf u})$, $II_p'({\bf u})=\frac{dII_p}{d\alpha}({\bf u})$, ${\bf u}=(1,\alpha,0,0)$, and  ${\bf w}=(0,0,\beta,-\mu)$. 
\begin{remark}
Note that $j^1 g_x(x,y)\equiv0$ if and only if 
$
\langle II_p({\bf u}),{\bf w}\rangle = \langle II_p'({\bf u}),{\bf w}\rangle = 0,
$
that is, $II_p({\bf u})$ and $II_p'({\bf u})$ are linearly dependent vectors in $N_pM$. At a non-inflection point, this is equivalent to ${\bf u}$ being an asymptotic direction at $p$ and ${\bf w}$ the associated binormal direction.
\end{remark}


Away from inflection points, if ${\bf u}$ is not asymptotic
($\Omega({\bf u})\neq0$), then $j^1 g_x(x,y)\neq 0$.
By Proposition~\ref{prop:class}, the contour generator
$\Sigma_\pi$ is a regular curve, and two cases arise.
If $\langle II_p({\bf u}),{\bf w}\rangle \neq 0$, then
$P_\pi$ has a fold singularity.
If $\langle II_p({\bf u}),{\bf w}\rangle = 0$ and
$\langle II_p'({\bf u}),{\bf w}\rangle \neq 0$, then
the singularity of $P_\pi$ is of Type 3, 5,$\ldots$, 10.

At inflection points, every tangent direction ${\bf u}$
is asymptotic. Nevertheless, the conditions
$\langle II_p({\bf u}),{\bf w}\rangle \neq 0$ and
$\langle II_p({\bf u}),{\bf w}\rangle = 0$ with
$\langle II_p'({\bf u}),{\bf w}\rangle \neq 0$
still distinguish the cases where the contour generator
$\Sigma_\pi$ is a regular curve, even though ${\bf u}$ is asymptotic. Let us study each case separately.

\subsection{Case 1:} Assuming that 
$\langle II_p({\bf u}),{\bf w}\rangle\neq0$, we parametrize $\Sigma_\pi$ by $t \mapsto (x(t),t)$ with $x(0)=0$, and
$$
x'(0)=-\frac{(2a_{02}\alpha+a_{11})\beta-(2 b_{02}\alpha+b_{11})\mu}{2\left((a_{02}\alpha^2+a_{11}\alpha+a_{20})\beta-(b_{02}\alpha^2+ b_{11}\alpha+b_{20})\mu\right)}=-\frac{\langle II'({\bf u}),{\bf w}\rangle}{\langle II({\bf u}),{\bf w}\rangle }.
$$
Then $P_\pi(x(t),t))=(t,g(x(t),t)))$ is a regular curve such that  $j^2P_\pi(x(t),t))=(t,\displaystyle(\kappa_{ac}/2)t^2)$, where $\kappa_{ac}$ is the curvature of the apparent contour at $t=0$ given by
\begin{equation}\label{apparent}
\kappa_{ac}=\frac{K^\nu}{\langle II({\bf u}),{\bf w}\rangle},
\end{equation}
with $\nu(p)={\bf w}$ and  
$$
K^\nu=(4a_{02}a_{20}-a_{11}^2)\beta^2
-(4(a_{20}b_{02}+a_{02}b_{20})-2a_{11}b_{11})\beta\mu+(4b_{02}b_{20}-b_{11}^2)\mu^2.
$$
Define
$
\sigma_{\mathbf{u}}(\mathbf{w})
=
\langle II_p(\mathbf{u}),\mathbf{w}\rangle.
$
Then
\[
K^\nu=\kappa_{ac}\,\sigma_{\mathbf{u}}(\mathbf{w}).
\]
Motivated by Koenderink's Theorem, we call
\(\sigma_{\mathbf{u}}(\mathbf{w})\)
the \emph{Koenderink factor} associated with the tangent direction
\(\mathbf{u}\) and the normal direction
\(\mathbf{w}\).
The above identity shows that this quantity is precisely the factor relating the Lipschitz--Killing curvature of the surface to the curvature of its apparent contour. Hence, it provides a natural analogue of Koenderink's formula for surfaces in \(\mathbb{R}^4\).


\begin{theorem}\label{Koenderink}
Let $p$ be a point on a surface $M$ in $\mathbb{R}^4$, and let $\pi$ be a $2$-plane such that the projection $P_\pi$ has a fold singularity at $p$. If ${\bf u}\in T_pM\cap\pi$, then the Lipschitz--Killing curvature $K^\nu(p)$ is equal to the product of the curvature $\kappa_{ac}$ of the apparent contour and the Koenderink factor $\sigma_{\bf u}({\bf w})$, namely,
\[
K^\nu(p)=\kappa_{ac}\cdot\sigma_{\bf u}({\bf w}).
\]
\end{theorem}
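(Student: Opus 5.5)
The plan is to compute the $2$-jet of the apparent contour directly from the normal form \eqref{Proj1} and compare it with the Lipschitz--Killing quadratic form. First, I will reduce to a convenient frame. Take $M$ in Monge form \eqref{Monge} at $p$, with ${\bf u}=(1,\alpha,0,0)\in T_pM\cap\pi$ and ${\bf w}=(0,0,\beta,-\mu)$, so that $P_\pi\sim_{\mathcal A}(y,g(x,y))$. Here $g=\beta f^1-\mu f^2$ is evaluated after the linear change $y\mapsto y+\alpha x$ together with the correction term in $\lambda$. The $\lambda$-term only modifies the first coordinate at order $\geq 2$. I will absorb it by a change in the target that removes quadratic terms of the first component, and I will check that it does not alter the $2$-jet of $g$ restricted to $\Sigma_\pi$. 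More simply, I will note that, up to the linear change, the quadratic part of $g$ equals $\langle II_p(\cdot,\cdot),{\bf w}\rangle$ written in the basis $\{{\bf u},{\bf e}_2\}$ of $T_pM$. Hence
\[
j^2g=\tfrac12\bigl(Ax^2+2Bxy+Cy^2\bigr),
\]
with $A=\sigma_{\bf u}({\bf w})$, $B=\langle II_p({\bf u},{\bf e_2}),{\bf w}\rangle=\tfrac12\langle II_p'({\bf u}),{\bf w}\rangle$, and $C=\langle II_p({\bf e_2},{\bf e_2}),{\bf w}\rangle$.

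Second, I will parametrize the contour generator. The fold hypothesis gives $A\neq0$, so by the implicit function theorem $g_x=0$ is a graph $x=x(t)$ with $x'(0)=-B/A$, as already displayed before the theorem. Then the apparent contour is $t\mapsto(t,g(x(t),t))$, which is a regular curve. It is a graph over the first coordinate axis with zero slope at $t=0$, because $g$ has no linear part. Its curvature at $0$ is therefore the second derivative
\[
\frac{d^2}{dt^2}g(x(t),t)\Big|_{0}=Ax'(0)^2+2Bx'(0)+C=\frac{AC-B^2}{A}.
\]
This uses only the $1$-jet of $x(t)$, because $g_x(x(t),t)\equiv0$ kills the term involving $x''$.

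Third, I will identify $AC-B^2$ with $K^\nu(p)$ for $\nu(p)={\bf w}$. In the basis $\{{\bf u},{\bf e}_2\}$, the matrix of $\langle II_p,{\bf w}\rangle$ has determinant $AC-B^2$. In the orthonormal basis $\{{\bf e}_1,{\bf e}_2\}$ this determinant is $\det W^\nu_p=K^\nu$, and the change of basis $\begin{pmatrix}1&0\\ \alpha&1\end{pmatrix}$ has determinant $1$, so the value is unchanged. This reproduces formula \eqref{apparent}, $\kappa_{ac}=K^\nu/\sigma_{\bf u}({\bf w})$, and multiplying through gives the theorem. The factors of $2$ coming from the Monge coefficients ($a=a_{20}$, etc., versus $\tfrac12 f_{xx}$) must be tracked consistently, using the explicit expression for $K^\nu$ given before the theorem.

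The main obstacle is justifying that the curvature of $\Delta_{ac}$ is well defined and equals this second derivative. That requires checking two things. First, the target coordinates $\{{\bf e}_2,{\bf f}\}$ and the $\mathcal A$-changes used in \eqref{Proj1} must not distort the $2$-jet of the contour. Second, $\mathcal A$-equivalence does not preserve curvature in general, so I will perform only the explicit linear and affine reductions rather than arbitrary $\mathcal A$-moves. I will track the normalization of $\pi_t$, namely the scaling by $\mu^2+\beta^2$ and the choice $\beta=1$, and state the identity with these fixed normalizations. Under these normalizations, $\sigma_{\bf u}({\bf w})$ and $K^\nu$ scale homogeneously of degrees $1$ and $2$ in ${\bf w}$, which matches the corresponding scaling of $\kappa_{ac}$.
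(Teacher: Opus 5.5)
Your proposal is correct and follows the paper's route. The paper also parametrizes the contour generator as $t\mapsto(x(t),t)$ from $g_x=0$, reads $\kappa_{ac}$ off the $2$-jet of $t\mapsto(t,g(x(t),t))$, and recognizes $K^\nu$ in the numerator $AC-B^2$. Your use of the unimodular basis $\{{\bf u},{\bf e_2}\}$ makes explicit the coefficient computation the paper leaves implicit, and your $x'(0)=-B/A$ with $B=\tfrac12\langle II_p'({\bf u}),{\bf w}\rangle$ keeps a factor $\tfrac12$ that the paper's shorthand $-\langle II'({\bf u}),{\bf w}\rangle/\langle II({\bf u}),{\bf w}\rangle$ drops; this does not affect the final identity.
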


\begin{remark}
The Koenderink factor depends on the choice of representatives of the tangent and normal directions. However, its vanishing is independent of these choices, and the above identity provides its geometric interpretation as the factor relating the curvature of the apparent contour to the Lipschitz-Killing curvature.
\end{remark}

Recall that if $\kappa$ is the curvature of a plane curve $\gamma(t)$ then an {\it inflection of order} $\ell$ of $\gamma$ is a point where
\[ 
\kappa(t)=\kappa'(t)=\ldots=\kappa^{(\ell-1)}(t)=0 \mbox{ and } \kappa^{(\ell)}(t)\neq 0.
\]
A {\it vertex of order} $\ell$ is a point where $\kappa(t)\neq0$,
\[ 
\kappa'(t)=\ldots=\kappa^{(\ell}(t)=0 \mbox{ and } \kappa^{(\ell+1)}(t)\neq 0.
\]
Inflections and vertices of order one are simply called inflections and vertices. So, as a consequence, we have the following observation.

\begin{corollary}\label{inflection/vertex}
With the conditions of Theorem \ref{Koenderink}, the following holds: 
\begin{itemize}
\item[(i)] $p$ is an elliptic point or an inflection point of $M$ if and only if $P_\pi(p)$ is  not an inflection of the apparent contour along $\pi$. 
\item[(ii)] $p$ is a hyperbolic or parabolic point of $M$ with ${\bf w}$ a binormal direction at $p$ if and only if $P_\pi(p)$ is an inflection of the apparent contour along $\pi$. 
\end{itemize}
\end{corollary}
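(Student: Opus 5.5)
The plan is to read everything off the Koenderink identity of Theorem \ref{Koenderink}. Under the fold hypothesis we have $\sigma_{\bf u}({\bf w})=\langle II_p({\bf u}),{\bf w}\rangle\neq 0$. Hence $K^\nu(p)=\kappa_{ac}\,\sigma_{\bf u}({\bf w})$ gives $\kappa_{ac}=0$ if and only if $K^\nu(p)=0$. Here $K^\nu(p)$ is the binary quadratic form $\mathcal B({\bf w})$ of \eqref{binormaleq} evaluated at ${\bf w}=(0,0,\beta,-\mu)$; in the Monge coordinates this is the expression \eqref{binormallines}, up to the sign convention ${\bf w}=(\beta,-\mu)$. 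Since the curvature of the plane curve $t\mapsto (t,g(x(t),t))$ vanishes exactly at inflections, both items reduce to deciding when $\mathcal B({\bf w})=0$ for the given normal direction ${\bf w}$. So the whole corollary becomes a statement about zeros of the quadratic form $\mathcal B$ on $N_pM$.

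I will split into the four point types, using the facts recalled in Section \ref{section2}. The discriminant of $\mathcal B$ in $(\lambda_1,\lambda_2)$ is, up to a positive factor, $\delta(p)$. This can be checked directly from \eqref{binormaleq}: the discriminant $(an+cl-2bm)^2-4(ac-b^2)(ln-m^2)$ equals $(an-cl)^2-4(am-bl)(bn-cm)$. This identity is the one computation I would actually expand, since it is what ties the zeros of $\mathcal B$ to the point classification.

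\emph{Elliptic points.} Here $\delta<0$, so $\mathcal B$ is definite and $K^\nu(p)\neq 0$ for every ${\bf w}\neq 0$. Therefore $\kappa_{ac}\neq0$ and $P_\pi(p)$ is not an inflection.

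\emph{Hyperbolic and parabolic points.} Here $\mathcal B$ has two, respectively one, real zero directions, namely the binormal directions. So $K^\nu(p)=0$ exactly when ${\bf w}$ is binormal, and this gives (ii). One point needs care: the fold condition must be compatible with ${\bf w}$ binormal. This holds, because by Proposition \ref{prop:class}(ii) and the Remark following the definition of $g_x$, the fold fails together with the vanishing of $j^1g_x$ only when ${\bf u}$ is the asymptotic direction associated with ${\bf w}$. With ${\bf u}$ not asymptotic, $\sigma_{\bf u}({\bf w})\neq0$ is consistent with $\mathcal B({\bf w})=0$.

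\emph{Inflection points.} This is the step I expect to be the main obstacle, because there $\mathcal B$ degenerates. If $Q_1,Q_2$ are dependent, I normalise $(Q_1,Q_2)=(Q,0)$ after a $GL(2)$ change in the normal plane, which only rescales $K^\nu$ and $\sigma$ by nonzero factors. Then $\mathcal B({\bf w})=\det(Q)\lambda_1^2$, whose zero set is the line $\lambda_1=0$. On that line $\sigma_{\bf u}({\bf w})=\lambda_1Q({\bf u})=0$, so such ${\bf w}$ never satisfies the fold hypothesis. Hence under the fold hypothesis $\lambda_1\neq0$, and $K^\nu(p)\neq0$ as long as $\det Q\neq0$. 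This is the case for the non-degenerate inflections $(x^2+y^2,0)$ and $(xy,0)$ of Table \ref{Gibson}, which are the only ones on a generic surface. So $\kappa_{ac}\neq0$, which completes (i).

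Combining the cases gives both equivalences: the forward directions come from the case analysis, and the converse directions follow because the four point types exhaust all generic points.
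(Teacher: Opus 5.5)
Your reduction matches the paper's. Under the fold hypothesis $\sigma_{\bf u}({\bf w})\neq 0$, so $\kappa_{ac}=0$ iff $K^\nu(p)=\mathcal B({\bf w})=0$, and the rest is a split over point types. Your discriminant identity is correct (it is an exact equality). The $(Q,0)$ normalisation, giving $\sigma_{\bf u}({\bf w})=\lambda_1Q({\bf u})$ and $\mathcal B({\bf w})=\det(Q)\lambda_1^2$, is a cleaner replacement for the paper's explicit formulas for $\kappa_{ac}$ in the normal forms. Together they correctly prove the forward direction of (i) and both directions of (ii). For the converse of (ii): $\kappa_{ac}=0$ forces $\mathcal B({\bf w})=0$, which you have excluded at elliptic points and, under the fold hypothesis, at non-degenerate inflection points.

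The gap is your last sentence. Exhausting the point types does not give the converse of (i). Your own hyperbolic/parabolic case shows $K^\nu(p)=0$ \emph{exactly when} ${\bf w}$ is binormal. So at a hyperbolic or parabolic point with ${\bf w}$ non-binormal, $\kappa_{ac}\neq0$. For example, with $(Q_1,Q_2)=(x^2,y^2)$, ${\bf u}=(1,\alpha)$ and ${\bf w}=(\beta,-\mu)$, one gets $\kappa_{ac}=-2\beta\mu/(\beta-\alpha^2\mu)$. This is a fold with nonzero curvature whenever $\beta\mu\neq0$ and $\beta\neq\alpha^2\mu$. Hence ``$P_\pi(p)$ is not an inflection'' does not force $p$ to be elliptic or an inflection point, and the ``if'' direction of (i) is false for a fixed $\pi$. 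The paper's proof likewise only establishes the forward implication there.

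What you can prove is (i) as an implication. Alternatively, you get a true biconditional by quantifying over planes: $p$ is elliptic or an inflection point iff $P_\pi(p)$ is not an inflection of the apparent contour for every $\pi$ with $P_\pi$ a fold at $p$. This version needs one more step at hyperbolic and parabolic points. For binormal ${\bf w}$, the form $\lambda_1Q_1+\lambda_2Q_2$ equals $\pm L^2$ with $L\neq0$. Any ${\bf u}\notin\ker L$ then gives $\sigma_{\bf u}({\bf w})\neq0$, hence a fold with $\kappa_{ac}=0$.
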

\begin{proof} The proof follows from Theorem~\ref{theo:notInf}~$(a1)$ and Theorem~\ref{theo:Inf} ((A)(a1) and (B)(a1)).
Since $P_\pi$ is a fold, it follows that $\sigma_{\bf u}({\bf w}) \neq 0$. Thus, statement {\normalfont (i)} follows directly from the fact that $\kappa_{ac} \neq 0$ if and only if $K^\nu(p)=\mathcal{B}({\bf w}) \neq 0$ (that is, when ${\bf w}$ is not a binormal direction). Since elliptic points admit no binormal directions, the result follows.
At inflection points of $M$, to simplify the calculations, we may assume the normal forms given in Table~\ref{Gibson}. In one case,
$
j^2(f^1,f^2)=(x^2+y^2,0),
$
and in the other,
$
j^2(f^1,f^2)=(xy,0).
$
Then we obtain
$
\kappa_{ac} =
\dfrac{2\beta^2}{(\alpha^2+1)\beta}
$
and
$
\kappa_{ac} =
-\dfrac{\beta^2}{2\alpha\beta},
$
respectively. In both cases, $P_\pi$ is a fold if and only if
$\beta\neq0$, equivalently,
$K^\nu=\mathcal{B}({\bf w})\neq0$.
Hence,
$\kappa_{ac}\neq0$, and therefore
$P_\pi(p)$ is not an inflection point of the apparent contour.


\noindent For {\normalfont (ii)}, as $P_\pi$ is a fold, then ${\bf u}\in T_pM\cap\pi$ is not an asymptotic direction. Again, we assume normal forms as in Table \ref{Gibson}. If $p$ is a hyperbolic point, take $j^2(f^1,f^2) = (x^2,y^2)$, and if $p$ is a parabolic point, take $j^2(f^1,f^2) = (xy,y^2)$. In these cases, we obtain
$
\kappa_{ac} = -\dfrac{2\beta\mu}{\beta-\alpha^2\mu }
$
\text{at hyperbolic points,} and
$
\kappa_{ac} = -\dfrac{1}{2}\dfrac{\beta^2}{\alpha\beta-\alpha^2\mu }
$
at parabolic points. Moreover, if ${\bf w}$ is a binormal direction, then $K^\nu = \mathcal{B}({\bf w}) = 0$, which implies that $\beta = 0$ in both cases. Hence, $\kappa_{ac} = 0$ and $P_\pi(p)$ is an inflection point of the apparent contour. Conversely, since $K^\nu = \kappa_{ac}\cdot \sigma_{\bf u}({\bf w})$, if $\kappa_{ac} = 0$, then $K^\nu = \mathcal{B}({\bf w}) = 0$. Therefore, ${\bf w}$ is a binormal direction at $p$, which occurs at hyperbolic or parabolic points.

\qed
\end{proof}

\begin{proposition} Suppose that ${\bf u}\in T_pM\cap\pi$ is not an asymptotic direction
and $p$ is a hyperbolic or parabolic point of $M$ with ${\bf w}$ a binormal direction at $p$.
\begin{itemize}
  \item[(i)] If $p$ is a hyperbolic point, then the inflection becomes of order 2 along the $11_5$-curve on the surface and order three at the singularity Type $18$ of the $P_\pi$ on this curve. 
\item[(ii)] If $p$ is a parabolic point, then the inflection becomes of order 2 at a $P_3(c)$-point on the surface.  
\end{itemize}
\end{proposition}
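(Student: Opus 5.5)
I will work directly with the parametrization of the apparent contour used in the proof of Theorem \ref{Koenderink}. Assume $p$ is hyperbolic (resp. parabolic) and put $M$ in Monge form with $j^2(f^1,f^2)=(x^2,y^2)$ (resp. $(xy,y^2)$). By Corollary \ref{inflection/vertex}(ii), ${\bf w}$ binormal forces $\beta=0$, so after scaling $\mu=1$ and ${\bf w}=(0,0,0,1)$, the binormal associated with the asymptotic direction $(1,0,0,0)$. Since ${\bf u}=(1,\alpha,0,0)$ is not asymptotic, $\alpha\neq 0$; the fold condition $\sigma_{\bf u}({\bf w})\neq0$ holds and $\kappa_{ac}(0)=0$.

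I will parametrize the contour generator as $t\mapsto (x(t),t)$ solving $g_x(x(t),t)=0$, with $g=g_{(\alpha,\lambda,0,1)}$. The apparent contour is then the graph $t\mapsto (t,G(t))$ with $G(t)=g(x(t),t)$. For a graph, $\kappa=G''/(1+G'^2)^{3/2}$, and $G'(0)=0$ after the affine normalization in \eqref{Proj1}. Hence the order of the inflection at $t=0$ is read off from the first nonvanishing derivative $G^{(k)}(0)$ with $k\ge 3$: order $\ell$ means $G''(0)=\dots=G^{(\ell+1)}(0)=0$ and $G^{(\ell+2)}(0)\neq0$.

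The key steps are as follows.

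\emph{Step 1.} Compute $x(t)$ to the required order by implicit differentiation of $g_x=0$. Substituting into $g$ gives $G(t)=c_3t^3+c_4t^4+c_5t^5+O(6)$, where the $c_i$ are polynomials in $\alpha,\lambda$ and the Monge coefficients. In the hyperbolic normal form the second component is $\beta f^1-\mu f^2=-f^2$, whose 2-jet is $-y^2$. The $x^3$ coefficient $b_{30}$ should therefore control $c_3$ after the reparametrization. I expect $c_3$ to be a nonzero multiple of $b_{30}$ (possibly plus terms that vanish because of $\beta=0$). Then $c_3=0$, i.e.\ an inflection of order at least $2$, occurs exactly when $b_{30}=0$. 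By Proposition \ref{condSingA3}(i) and Theorem \ref{theo:hyp}(b1), this is the $11_5$-curve (the $B_2$-curve, the $A_3$-set of $h_{\bf w}$).

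\emph{Step 2.} On $b_{30}=0$, compute $c_4$. I aim to show it is a nonzero multiple of $b_{21}^2-4b_{40}$, which is the $A_4$-condition for $h_{\bf w}$ and the Type~$18$ condition in Theorem \ref{theo:hyp}(c1) and Proposition \ref{condSingA3}(ii). This gives order exactly $2$ on the $11_5$-curve away from those points, and order $3$ at Type~$18$ points. Genericity then ensures $c_5\neq0$ there.

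\emph{Step 3 (parabolic case).} Repeat Steps 1 and 2 with $j^2=(xy,y^2)$. Again $c_3$ should be proportional to $b_{30}$, and $b_{30}=0$ on the parabolic set is exactly the $P_3(c)$-condition of Proposition \ref{CondP3}. A generic nonvanishing of $c_4$ at such points then gives order exactly $2$.

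The main obstacle is Step~1 and Step~2 bookkeeping. The coefficients $c_3,c_4$ depend a priori on $\alpha$ and $\lambda$, yet the claimed loci are independent of the chosen plane. So I must check that the $\alpha,\lambda$-dependence factors out as a nonvanishing factor (a power of $\alpha$ or $\sigma_{\bf u}({\bf w})$). If it does not factor cleanly, I would instead use the Koenderink identity $K^\nu=\kappa_{ac}\sigma_{\bf u}({\bf w})$ along the contour generator. Differentiating in $t$ gives $\kappa_{ac}'(0)=\frac{d}{dt}K^{\nu}(\gamma(t))/\sigma$ at $t=0$. The problem then reduces to the vanishing order of the Lipschitz--Killing curvature along ${\bf w}$ restricted to $\Sigma_\pi$, which is governed by the cubic and quartic jets of $h_{\bf w}$, namely $b_{30}$ and $b_{21}^2-4b_{40}$.
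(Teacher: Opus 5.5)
Your proposal is correct and follows essentially the same route as the paper: the paper computes the graph parametrization $t\mapsto(t,G(t))$ of the apparent contour in the normal forms $(x^2,y^2)$ and $(xy,y^2)$ with $\beta=0$. It finds $c_3=b_{30}/\alpha^3$ and, once $b_{30}=0$, $c_4$ equal to a nonzero constant times $(b_{21}^2-4b_{40})/\alpha^4$, and then reads off the loci from Propositions \ref{condSingA3} and \ref{CondP3}. In that computation $\lambda$ enters only through a term carrying the factor $b_{30}$, so the factorization you hoped for in Steps 1--2 does occur and the Koenderink-identity fallback is not needed.
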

\begin{proof} As ${\bf u}=(1,\alpha,0,0)\in\pi$ is not asymptotic and $p$ is a hyperbolic (resp. parabolic) with $j^2(f^1,f^2) = (x^2,y^2)$, (resp.   $j^2(f^1,f^2) = (xy,y^2)$), then $\alpha\neq0$. Moreover, assuming that $P_\pi(p)$ is a inflection we have that $\kappa_{ac}=0$. Thus by Theorem \ref{Koenderink} $K^\nu=\mathcal B({\bf w})=0$, where ${\bf w}=(0,0,0.-\mu)$ is a binormal direction.
 Then the apparent contour in both situations can be parametrized by
$$
\left(t,\frac{b_{30}}{\alpha^3}t^3+\frac{\mu(12b_{30}\alpha\lambda+(b_{21}^2-4b_{40})\alpha^2 + 6b_{21}b_{30}\alpha+9b_{30}^2 )}{\alpha^6}t^4+O(5)\right).
$$
The result follows from Proposition \ref{condSingA3} and Proposition \ref{CondP3} using the values of $b_{30}$ and $b_{21}^2-4b_{40}$.
\qed
\end{proof}

\


We now consider the case where $p$ is either an elliptic point or an inflection point, which, by Corollary~\ref{inflection/vertex}~(i), is equivalent to saying that $P_\pi(p)$ is not an inflection point of the apparent contour along $\pi$.
In the elliptic case, ${\bf u}$ is not an asymptotic direction, whereas in the inflection case every direction ${\bf u}\in T_pM$ is asymptotic.
Our goal is to determine when $P_\pi(p)$ is a vertex of the apparent contour.

For the elliptic case, recall that, by Theorem~\ref{theo:notInf}~$(a1)$, the projection $P_\pi$ has a fold singularity whenever $\pi\notin\mathcal S$, where $\mathcal S$ is a surface in the set of planes $\Pi\subset Gr(2,4)$. Moreover, $\Pi$ is parametrized by $(\alpha,\lambda,\mu)\in\mathbb{R}^3$, after setting $\beta=1$.

\begin{theorem}\label{vertex} Let $p$ be an elliptic point satisfying the above conditions. Then we have the following:

\begin{itemize}
\item[(i)] 
For $\pi \in \mathcal S_{v} \setminus \mathcal C_{v}$, where $\mathcal S_v$ is a surface and $\mathcal C_v$ is a curve in $\Pi$, the apparent contour has an ordinary vertex.

\item[(ii)]  For $\pi\in \mathcal C_v$ and away from isolated special planes $\pi_v$ on this curve, the apparent contour has a vertex of order 2. 

\item[(iii)]  For each special plane $\pi_v$, determined by special direction in $T_pM$, the apparent contour has a vertex of order 3. For a generic surface $M$, the vertex becomes of order 4 along a regular curve on the surface $M$ and of order 5 at isolated points on that curve. 

\end{itemize}

\end{theorem}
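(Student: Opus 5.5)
We work at an elliptic point with the normal form $j^2(f^1,f^2)=(x^2-y^2,xy)$, as in the proof of Theorem~\ref{theo:notInf}, with $\beta=1$ and $\pi\in\Pi\setminus\mathcal S$, so that $P_\pi\sim_{\mathcal A}(y,g)$ is a fold. By Corollary~\ref{inflection/vertex}(i), $\kappa_{ac}(0)\neq0$. Since $g_x(0,0)=0$ and $g_{xx}(0,0)=2(1-\alpha^2-\alpha\mu)\neq0$, the implicit function theorem lets us parametrize the contour generator $\Sigma_\pi$ as $t\mapsto(x(t),t)$. We compute the Taylor coefficients $x'(0),x''(0),\dots$ recursively from $g_x(x(t),t)\equiv0$, exactly as was done for $x'(0)$ before Theorem~\ref{Koenderink}. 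The apparent contour is then the graph $t\mapsto(t,\phi(t))$ with $\phi(t)=g(x(t),t)$. Its curvature is $\kappa_{ac}(t)=\phi''/(1+\phi'^2)^{3/2}$, so $\kappa_{ac}^{(k)}(0)$ is a polynomial in the jets of $\phi$. (We use Euclidean curvature in $\pi_t$ with the basis fixed in \eqref{eq:case1}; the vertex conditions are those of this planar curve.)

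The key steps, in order, are as follows.

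\emph{Step 1 (ordinary vertices).} Compute $\phi$ up to order $3$. Since $\phi'(0)=0$, we have $\kappa'_{ac}(0)=\phi'''(0)$, which is a function $V_1(\alpha,\lambda,\mu)$. This function involves the cubic coefficients $a_{ij},b_{ij}$ with $i+j=3$, and it depends on $\lambda$ affinely through the term $-\lambda(\mu f^1+f^2)/(\mu^2+1)$. Its zero set is the surface $\mathcal S_v\subset\Pi$. We check that $\partial V_1/\partial\lambda\neq0$ away from $\mathcal S$, so that $\mathcal S_v$ is a smooth graph $\lambda=\lambda_1(\alpha,\mu)$. Off $\mathcal C_v$ we have $\kappa''_{ac}(0)\neq0$, which gives (i).

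\emph{Step 2 (order-2 vertices).} On $\mathcal S_v$, the condition $\kappa''_{ac}(0)=0$ reduces, after using $\phi'''(0)=0$, to $\phi^{(4)}(0)-3\phi''(0)^3=0$. Substituting $\lambda=\lambda_1(\alpha,\mu)$ gives one equation $V_2(\alpha,\mu)=0$, which defines the curve $\mathcal C_v$. Regularity of $\mathcal C_v$ follows by checking that $\nabla V_2\neq0$ generically; since $V_2$ involves the quartic coefficients linearly, these can be perturbed. Away from the zeros of $V_3$ (the next condition) we get (ii).

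\emph{Step 3 (order 3 and the surface curves).} Along $\mathcal C_v$, parametrized by $\alpha$ after solving $V_2$ for $\mu$ where possible, the condition $\kappa'''_{ac}(0)=0$ becomes a polynomial equation $V_3(\alpha)=0$ in the direction $\alpha$ of ${\bf u}=(1,\alpha,0,0)$. Its finitely many roots give the special planes $\pi_v$ and the special tangent directions, in analogy with the butterfly directions of Theorem~\ref{theo:notInf}(a4). The order-$4$ condition $V_4(\alpha)=0$ involves the $5$-jet of $(f^1,f^2)$. An order-$4$ vertex requires a common root of $V_3$ and $V_4$, that is, the vanishing of their resultant $\mathrm{Res}_\alpha(V_3,V_4)$. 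This resultant is a function of the point $p$ in $M$. By Montaldi's transversality theorem \cite{Montaldi2}, applied to the family $P$ (or a direct genericity argument in the jet coefficients), it defines a regular curve on $M$, since this is one condition on the $2$-dimensional surface. On that curve, the additional condition $V_5=0$ (involving the $6$-jet) gives codimension two, hence isolated points, and this yields (iii).

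\emph{Main obstacle.} The hard part is Step 3: the expressions $V_3$ and $V_4$ are large, and proving that the resultant vanishes transversally, rather than identically, requires showing that its differential with respect to the jet coefficients is nonzero. I would first try to isolate a coefficient that appears linearly only in $V_4$, such as the top-order $x$-coefficient $b_{50}$ after composition. If that coefficient has a nonvanishing multiplier, the resultant is a submersion in the jet space, and transversality follows.
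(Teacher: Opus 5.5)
Your proposal follows essentially the same route as the paper. You take the jet of $t\mapsto g(x(t),t)$ along $\Sigma_\pi$ and solve $\kappa_{ac}'(0)=\phi'''(0)=0$ for $\lambda$ to get $\mathcal S_v$. You then impose $\phi^{(4)}(0)-3\phi''(0)^3=0$ (the paper's ${\bf c_4}-({\bf c_2})^3=0$) to get $\mathcal C_v$, use a condition in $\alpha$ for order~$3$, and count one/two conditions on $M$ for orders~$4$ and~$5$.

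Two small corrections, neither of which affects the conclusion:
\begin{itemize}
\item Your check ``$\partial V_1/\partial\lambda\neq0$ away from $\mathcal S$'' is false. $\partial V_1/\partial\lambda$ is a nonzero multiple of the quadratic part of $\mu f^1+f^2$ evaluated on the tangent vector of $\Sigma_\pi$ at $p$. This quantity vanishes along a curve in the $(\alpha,\mu)$-plane; for instance at $\alpha=\mu=0$ one gets $V_1=6a_{03}$ for every $\lambda$. So $\mathcal S_v$ is a graph $\lambda=\lambda_1(\alpha,\mu)$ only off that curve, although it is still generically a smooth surface.
\item Your jet orders are off by one. Since ${\bf c}_k$ involves the $k$-jet, $V_3$, $V_4$ and $V_5$ involve the $5$-, $6$- and $7$-jets respectively. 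The coefficient to perturb in your transversality argument should therefore be of degree~$6$, because $b_{50}$ already enters $V_3$.
\end{itemize}
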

\begin{proof} For vertices, we use \cite[Theorem 6.3.6]{TariSalaHase}. We compute the 7-jet of $x(t)$ at $t=0$, which determines the 8-jet at $t=0$ of the parametrization $t\mapsto(t,g(x(t),t))$ of the apparent contour. We find
$$
j^8g(x(t),t)=\sum_{i=2}^8{\bf c_i}t^i
$$
where each ${\bf c_i}$ depends on the coefficients $a_{ij}$, $b_{ij}$, $\alpha$, $\mu$, and $\lambda$. At an elliptic point, our calculations simplify considerably if we consider $j^2(f^1,f^2)= (x^2-y^2, xy)$ as in Theorem \ref{theo:notInf}. 

\noindent (i) When ${\bf c_2}\neq0$, i.e., $\kappa_{ac}\neq0$, the apparent contour has an ordinary vertex if and only if ${\bf c_3}=0$ and ${\bf c_4}-({\bf c_2})^3\neq0$. Since the condition ${\bf c_3}=0$ depends on $\alpha$, $\mu$, and $\lambda$, we may isolate $\lambda$ and obtain a surface $\mathcal S_v$ in $\Pi$ parametrized by $(\alpha,\mu)$. Moreover, substituting this expression for $\lambda$ into ${\bf c_4}-({\bf c_2})^3$, we obtain a polynomial of order $40$ in the variables $(\alpha,\mu)$.

\noindent (ii) The point $P_\pi(p)$ is a vertex of order two if and only if
$
{\bf c_3} = {\bf c_4} - ({\bf c_2})^3 = 0$ and
${\bf c_5} \neq 0.$
In this case, the condition ${\bf c}_4 - ({\bf c}_2)^3 = 0$ defines a curve $\mathcal C_v$ in $\Pi$, while ${\bf c_5}$ depends only on the variable $\alpha$.

\noindent (iii) 
Finally, when ${\bf c_3} = {\bf c_4} - ({\bf c_2})^3 = {\bf c_5}= 0$, and ${\bf c_6} - 2({\bf c_2})^5\neq0$, there exist special isolated planes on the curve $\mathcal C_v$, corresponding to distinguished directions in $T_pM$, for which the vertex is of order $3$. Furthermore, the vertex is of order $4$ if and only if ${\bf c_3} = {\bf c_4} - ({\bf c_2})^3 = {\bf c_5}= {\bf c_6} - 2({\bf c_2})^5=0$, and ${\bf c_7} \ne 0$, and it is of order $5$ if and only if ${\bf c_3} = {\bf c_4} - ({\bf c_2})^3 = {\bf c_5}= {\bf c_6} - 2({\bf c_2})^5={\bf c_7}= 0$, and ${\bf c_8} -5 ({\bf c_2})^7\ne0.$ 

The occurrence of vertices of order four (respectively, five) on apparent contours imposes one (respectively, two) independent conditions on the surface. Consequently, they appear generically along a curve (respectively, at isolated points of that curve).\qed
 
\end{proof}

\

For the inflection point case, recall that, by Theorem~\ref{theo:Inf}, the projection $P_\pi$ has a fold singularity whenever $\pi\notin\mathcal S_i$, $i=1,2,3$, where $\mathcal S_i$ are surfaces in the set of planes $\Pi\subset Gr(2,4)$. Moreover, $\Pi$ is parametrized by $(\alpha,\lambda,\beta)\in\mathbb{R}^3$, after setting $\mu=1$.

\begin{theorem} Let $p$ be an inflection point of $M$ satisfying the above conditions. Then we have the following: 
\begin{itemize}
\item[(A)] Suppose $j^2(f_1,f_2)=(x^2+ y^2,0)$ and $\pi\notin \mathcal S_1$.
\begin{itemize}
\item[(i)] 
For $\pi \in \mathcal S^1_{v} \setminus \mathcal C^1_{v}$, where $\mathcal S^1_v$ is a surface and $\mathcal C^1_v$ is a curve in $\Pi$, the apparent contour has an ordinary vertex.

\item[(ii)]  For $\pi\in \mathcal C^1_v$ and away from isolated special planes $\pi^1_v$ on this curve, the apparent contour has a vertex of order 2. 

\item[(iii)]  For each special plane $\pi^1_v$, determined by special direction in $T_pM$, the apparent contour has a vertex of order 3. 
\end{itemize}
\item[(B)] Suppose $j^2(f_1,f_2)=(xy,0)$ and $\pi\notin \mathcal S_2\cup \mathcal S_3$.
\begin{itemize}
\item[(i)] 
For $\pi \in \mathcal S^2_{v} \setminus \mathcal C^2_{v}$, where $\mathcal S^2_v$ is a surface and $\mathcal C^2_v$ is a curve in $\Pi$, the apparent contour has an ordinary vertex.

\item[(ii)]  For $\pi\in \mathcal C^2_v$ and away from isolated special planes $\pi^2_v$ on this curve, the apparent contour has a vertex of order 2. 

\item[(iii)]  For each special plane $\pi^2_v$, determined by special direction in $T_pM$, the apparent contour has a vertex of order 3. 
\end{itemize}
\end{itemize}
\end{theorem}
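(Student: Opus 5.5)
The plan is to follow the proof of Theorem~\ref{vertex} step by step, changing only the normal forms. We take the inflection normal forms from Table~\ref{Gibson} and use the parametrisation $\Pi\ni\pi\leftrightarrow(\alpha,\lambda,\beta)$ with $\mu=1$, as in Theorem~\ref{theo:Inf}. Since $\pi\notin\mathcal S_1$ (resp. $\pi\notin\mathcal S_2\cup\mathcal S_3$), Theorem~\ref{theo:Inf} (A)(a1) (resp. (B)(a1)) says $P_\pi$ is a fold. So $\langle II_p({\bf u}),{\bf w}\rangle\neq0$, and we can parametrise the contour generator by $t\mapsto(x(t),t)$ as in Case~1. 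We solve $g_x(x(t),t)=0$ order by order for the $7$-jet of $x(t)$. This gives the $8$-jet $\sum_{i=2}^8{\bf c_i}t^i$ of $t\mapsto(t,g(x(t),t))$, whose coefficients depend on $a_{ij},b_{ij}$ and on $(\alpha,\lambda,\beta)$. By Corollary~\ref{inflection/vertex}(i), ${\bf c_2}=\kappa_{ac}/2\neq0$. Explicitly, ${\bf c_2}$ is a nonzero multiple of $\beta/(\alpha^2+1)$ in case (A) and of $\beta/\alpha$ in case (B).

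Next we apply the vertex criterion of \cite[Theorem 6.3.6]{TariSalaHase} exactly as in Theorem~\ref{vertex}. The point is an ordinary vertex if and only if ${\bf c_3}=0$ and ${\bf c_4}-{\bf c_2}^3\neq0$. It is a vertex of order $2$ if additionally ${\bf c_4}-{\bf c_2}^3=0$ and ${\bf c_5}\neq0$. It is of order $3$ if further ${\bf c_5}=0$ and ${\bf c_6}-2{\bf c_2}^5\neq0$.

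The key structural step is to check how $\lambda$ enters these coefficients. Because $j^2(f^1,f^2)$ has second component zero, $\lambda$ first appears in ${\bf c_3}$, through the terms $\lambda(\mu f^1+\beta f^2)$ composed with the degree-two parts. We expect it to appear linearly there, with coefficient a nonvanishing multiple of a power of ${\bf c_2}$. We then solve ${\bf c_3}=0$ for $\lambda=\lambda(\alpha,\beta)$, which defines the surface $\mathcal S^1_v$ (resp. $\mathcal S^2_v$) and proves (i).

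Substituting this $\lambda$ into ${\bf c_4}-{\bf c_2}^3$ gives a function of $(\alpha,\beta)$. Because of the explicit form of ${\bf c_2}$, this function contains a nonzero term in $\beta^3$, so its zero set is generically a regular curve $\mathcal C^1_v$ (resp. $\mathcal C^2_v$), for instance solvable as $\beta=\beta(\alpha)$ or via the implicit function theorem. Restricting ${\bf c_5}$ to this curve yields a function of $\alpha$ alone, a direction in $T_pM$. Its simple zeros give the isolated planes $\pi^1_v$ (resp. $\pi^2_v$) and prove (ii). Finally, at these planes we check that ${\bf c_6}-2{\bf c_2}^5\neq0$ for generic $a_{ij},b_{ij}$, which gives (iii). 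Since inflection points are isolated on a generic $M$, there are no further surface parameters to exploit, which is why the statement stops at order $3$.

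The main obstacle is the genericity bookkeeping. We must show that each successive condition is independent and can be solved for the next parameter: ${\bf c_3}$ must be genuinely linear in $\lambda$ with nonzero coefficient, ${\bf c_4}-{\bf c_2}^3$ must not vanish identically in $\beta$ after the substitution, and ${\bf c_5}|_{\mathcal C_v}$ must be a nonconstant polynomial in $\alpha$. I would verify each of these by exhibiting explicit nonzero leading coefficients, computed with Maple as elsewhere in the paper.

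In case (B) extra care is needed near $\alpha=0$. There ${\bf c_2}$ blows up and $\mathcal S_2$ is excluded, so I would restrict attention to $\alpha\beta\neq0$ throughout.
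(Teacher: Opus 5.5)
Your plan is correct and is essentially the paper's own argument: the paper only says the proof is analogous to that of Theorem~\ref{vertex}. That means computing the jet $\sum{\bf c_i}t^i$ of the apparent contour with the inflection normal forms and $\mu=1$, then imposing ${\bf c_3}=0$, ${\bf c_4}-{\bf c_2}^3=0$ and ${\bf c_5}=0$ in turn to cut out $\mathcal S_v$, $\mathcal C_v$ and the isolated planes, with ${\bf c_6}-2{\bf c_2}^5\neq0$ giving order $3$. Your additional checks fill in details the paper leaves implicit and are accurate: ${\bf c_2}$ is proportional to $\beta/(1+\alpha^2)$, resp.\ $\beta/\alpha$, and $\lambda$ enters ${\bf c_3}$ linearly with coefficient a nonzero multiple of $\beta$. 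The genericity bookkeeping is more cleanly settled by noting that the $k$-jet of $(f^1,f^2)$ first enters ${\bf c_k}$ affinely and independently of the earlier conditions, rather than by your $\beta^3$-term heuristic.
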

\begin{proof} 
The proof is analogous to that of Theorem~\ref{vertex} and is therefore omitted.
\end{proof}

\begin{remark}
The study of the vertices of the apparent contour can be made more robust by analyzing the contact between the surface and the cylinders. This approach provides a deeper understanding of the geometry of higher-order vertices, and will be explored in future work.
\end{remark}

\subsection{Case 2:} Assuming $\langle II_p({\bf u}),{\bf w}\rangle = 0$ and $\langle II_p'({\bf u}),{\bf w}\rangle \neq 0$. Then
$
(a_{02}\alpha^2+a_{11}\alpha+a_{20})\beta-(b_{02}\alpha^2+ b_{11}\alpha+b_{20})\mu=0$
and $\left((2a_{02}\alpha+a_{11})\beta-(2 b_{02}\alpha+b_{11})\mu\right)\ne0$.
If we denote by $g_{ij}$ the coefficient of $x^{i}y^i$ in $g$, then parametrize $\Sigma_\pi$ by $t \mapsto (t,y(t))$ with 
$$
y(t)={\bf d_2}t^2+{\bf d_3}t^3+{\bf d_4}t^4+{\bf d_5}t^5+O(6)
$$
where
$$
\left\{
\begin{array}{l}
{\bf d_2}=-\frac{3g_{30}}{g_{11}};\\ 
{\bf d_3}=-\frac{4g_{11}g_{40}-6g_{21}g_{30})}{g_{11}^2};\\ 
{\bf d_4}=-\frac{5g_{11}^2g_{50}-(8g_{21}g_{40}+9g_{30}g_{31})g_{11}+3g_{30}(g_{12}g_{30}+4g_{12}^2)}{g_{11}^3}.
\end{array}
\right.
 $$

Then the apparent contour $\Delta$ is parametrized by $P_\pi(t,y(t))=(y(t),g(t,y(t)))$ such that 
$$
g(t,y(t))={\bf h_3}t^3+O(4),
$$
where 
$$
\left\{
\begin{array}{l}
{\bf h_3}=-{2g_{30}};\\ 
{\bf h_4}=\frac{9g_{02}g_{30}^2-3g_{11}^2g_{40}+g_{11}g_{21}g_{30}}{g_{11}^2};\\ 
\end{array}
\right.
 $$

A map-germ $\gamma : (\mathbb{R}, 0) \to (\mathbb{R}^2, 0)$ is called a \emph{$(k,\ell)$-cusp singularity} if it is $\mathcal{A}$-equivalent to  $t\mapsto(t^k, t^\ell)$. Recognition criteria for these singularities are known, see for example \cite{BruceGaffiney}. 
In particular, $\gamma$ has a $(2,3)$-cusp at the origin if and only if $\gamma'(0)=0$ and
$
\det\big[\gamma''(0),\gamma'''(0)\big]\neq 0.
$
Similarly, $\gamma$ has a $(3,4)$-cusp at the origin if and only if $\gamma'(0)=\gamma''(0)=0$ and
$
\det\big[\gamma'''(0),\gamma^{(4)}(0)\big]\neq 0.
$
Finally, when
$
\gamma'(0)=\gamma''(0)=\gamma'''(0)=0,
$
we say that $\gamma$ has a \emph{$(4,5)$-cusp or worse}.

\begin{theorem}\label{cups-apparent}
Consider $p\in M$ such that the singularity of $P_\pi(p)$ is of Type 3 or worse ($P_\pi(p)\sim_{\mathcal A^{(2)}}(y,xy)$). 
\begin{itemize}
    \item[(i)] If $P_\pi(p)$ is of Type 3, then the apparent contour has a  $(2,3)$-cusp singularity.
    
    \item[(ii)] If $P_\pi(p)$ is of Type 5, then the apparent contour has a singularity $(3,4)$-cusp singularity.

    \item[(iii)] If $P_\pi(p)$ is of Type 6 or worse, then the apparent contour has a $(4,5)$-cusp singularity or worse.
\end{itemize}
\end{theorem}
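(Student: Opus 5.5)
Since the statement only concerns the $\mathcal A$-type of $P_\pi$, I will work with a normal form rather than the raw Monge data. The apparent contour $\Delta_{ac}=P_\pi(\Sigma_\pi)$ is an $\mathcal A$-invariant of $P_\pi$: if $\phi\circ P_\pi\circ\psi^{-1}=N$, then $\psi$ maps $\Sigma_\pi$ onto $\Sigma_N$ and $\phi$ maps $P_\pi(\Sigma_\pi)$ onto $N(\Sigma_N)$. Moreover, the parametrized germ $t\mapsto P_\pi(\gamma(t))$, with $\gamma$ a regular parametrization of the (regular) singular set, changes only by a diffeomorphism of the target plane and a reparametrization of the source. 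So its $\mathcal A$-class as a plane curve germ is determined by the $\mathcal A$-class of $P_\pi$.

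\textbf{Reduction to the standard form.} By Proposition~\ref{prop:class}(i), the hypothesis $j^2P_\pi\sim(y,xy)$ means $\Sigma_\pi$ is regular. By Rieger's classification in Table~\ref{tab:rieger}, the germs with this 2-jet are the Types $3,5,6,\dots,10$, all of the form $N(x,y)=(x,xy+\varphi(y))$ with $\varphi(y)=y^{k}+\dots$: $k=3$ for Type 3, $k=4$ for Type 5, and $k\ge5$ for Types $6$--$10$. Here I have swapped coordinates to match Rieger's normal forms; equivalently, in the paper's coordinates $(y,g)$ one uses $g_{30}$, $g_{40}$, etc.

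\textbf{Computation.} For $N$, the singular set is $\partial_y(xy+\varphi(y))=0$, i.e. $x=-\varphi'(y)$, which I parametrize by $y=t$. Then
\[
\gamma(t)=N(-\varphi'(t),t)=\bigl(-\varphi'(t),\,-t\varphi'(t)+\varphi(t)\bigr)=\bigl(-kt^{k-1}+\dots,\;(1-k)t^{k}+\dots\bigr).
\]
For $k=3$ this gives $(-3t^2,-2t^3)+\dots$, so $\gamma'(0)=0$ and $\det[\gamma''(0),\gamma'''(0)]\neq0$: a $(2,3)$-cusp. For $k=4$ it gives $(-4t^3,-3t^4)+\dots$, so $\gamma'(0)=\gamma''(0)=0$ and $\det[\gamma'''(0),\gamma^{(4)}(0)]\neq0$: a $(3,4)$-cusp. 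For $k\ge5$ the first component starts at order at least $4$ and the second at order at least $5$, so $\gamma'(0)=\gamma''(0)=\gamma'''(0)=0$: a $(4,5)$-cusp or worse by the definition above. Applying the $\mathcal A$-invariance from the first paragraph then proves (i)--(iii).

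\textbf{Consistency check and main obstacle.} As a cross-check in the paper's own coordinates, I would plug the expansion $y(t)={\bf d_2}t^2+\dots$ and $g(t,y(t))={\bf h_3}t^3+\dots$ into these criteria. Type 3 corresponds to $g_{30}\neq0$, giving ${\bf d_2}\neq0$ and ${\bf h_3}\neq0$. Type 5 corresponds to $g_{30}=0$ and $g_{40}\neq0$, so ${\bf d_2}=0$, ${\bf h_3}=0$, ${\bf d_3}\neq0$ and ${\bf h_4}\neq0$. The step needing the most care is justifying that the curve-germ class is well defined, i.e. that the $\mathcal A$-equivalence of $P_\pi$ carries the singular set and its parametrization compatibly. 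Working in the normal form sidesteps the messy coefficients ${\bf d_i}$ and ${\bf h_i}$.
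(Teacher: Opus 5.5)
Your proof is correct, but it takes a different route from the paper's.

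\textbf{What the paper does.} It stays in Monge-form coordinates. It parametrizes $\Sigma_\pi$ by $t\mapsto(t,y(t))$ and expands the apparent contour as $(\mathbf{d}_2t^2+\cdots,\mathbf{h}_3t^3+\cdots)$. It then writes the leading coefficients in terms of the cusp quantity $\Lambda$ of \eqref{cuspcondition} and of $g_{40}$, $g_{50}$, and applies the recognition criteria for $(k,\ell)$-cusps. This is carried out explicitly only at elliptic points with $(Q_1,Q_2)=(x^2-y^2,xy)$; the hyperbolic, parabolic and inflection cases are declared analogous.

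\textbf{What you do.} You show once that the parametrized germ $t\mapsto P_\pi(\gamma(t))$ is an $\mathcal A$-invariant of $P_\pi$. You then compute it on Rieger's normal forms $(x,xy+\varphi(y))$.

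\textbf{What each buys.} Your route is uniform: it needs no case split by the type of point and no coefficient bookkeeping. It also gives more: you obtain the exact curve germ for each type, e.g.\ $(-5t^4\mp 7t^6,\,-4t^5\mp 6t^7)$ for Type~6. That is essentially the refinement stated in the remark after the theorem. The paper's computation, in turn, ties the cusp order of $\Delta_{ac}$ directly to the quantities that cut out the strata of Theorem~\ref{theo:notInf} ($\Lambda$ and the butterfly-direction polynomial).

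\textbf{Scope of ``or worse''.} The paper's part (iii) applies to every germ with $g_{20}=g_{30}=g_{40}=0$ and $g_{11}\neq0$. Your normal-form argument covers (iii) only for the Types $6$--$10$ of Table~\ref{tab:rieger}. That suffices for a generic surface, by Montaldi's transversality. If you want the unrestricted statement, your computation does not actually need the normal form. Take any $(x,F(x,y))$ with $F=xy+O(3)$ and $\Sigma=\{x=\chi(y)\}$. Then
\[
\frac{d}{dt}F(\chi(t),t)=F_x(\chi(t),t)\,\chi'(t), \qquad F_x(\chi(t),t)=t+O(t^2).
\]
So the second component of the apparent contour always has order exactly $\mathrm{ord}\,\chi+1$. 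Moreover $\mathrm{ord}\,\chi$ equals the order of $F_y(0,t)$, which is $2$, $3$ or $\geq4$ according as the germ is a cusp, a swallowtail, or worse.
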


\begin{proof}
The proof is divided into two cases, according to whether $p$ is a non-inflection point or an inflection point of $M$.
In the first case, we are under the assumptions of Theorem \ref{theo:notInf} $(a2)$, so the point $p$ may be elliptic, hyperbolic, or parabolic. Accordingly, we may consider
\[
(Q_1, Q_2) = (x^2 - y^2, xy), \quad (x^2, y^2), \quad \text{and} \quad (xy, y^2),
\]
in each of these cases, respectively, as this choice significantly simplifies the computations. As in Theorem \ref{theo:notInf}, we treat the case where $p$ is an elliptic point. The remaining cases follow analogously and are therefore omitted. In such a case, 
the apparent contour $\Delta_{ac}$ is parametrized by 
$$
({\bf d_2}t^2+O(3),{\bf h_3}t^3+O(4))
$$
where ${\bf d}_2=\frac{3\Lambda}{\alpha^2+1}$ and ${\bf h}_3=\frac{-2\Lambda}{\alpha}$, and $\Lambda$ is given by Equation~\eqref{cuspcondition} in the proof of Theorem~\ref{theo:notInf}.

\noindent (i) If $P_{\pi}(p)$ has a singularity of Type $3$, that is, if $\Lambda \neq 0$, then, by the recognition criteria, the apparent contour has a $(2,3)$-cusp.

\noindent (ii) When $\Lambda=0$, the $P_{\pi}(p)$ has a singularity of Type $5$ if $g_{40}\neq0$. In this situation,
$\Delta$ is parametrized by 
$$
({\bf d_3}t^3+O(4),{\bf h_4}t^4+O(5))
$$
where  ${\bf d_3}=-\frac{4g_{40}}{\alpha^2+1}$ and ${\bf h_4}=-3g_{40}$, i.e., it has a $(3,4)$-cusp.

\noindent (iii) Finally, $P_{\pi}(p)$ has a singularity of Type $6$ or worse, if $\Lambda=g_{40}=0$. So 
$\Delta$ is given by  
$$
({\bf d_4}t^4+O(5),{\bf h_5}t^5+O(6))
$$
where  ${\bf d_4}=-\frac{5g_{5
0}}{\alpha^2+1}$ and ${\bf h_5}=g_{50}$, i.e., it has a $(4,5)$-cusp or worse. 

Finally, for the case where $p$ is an inflection point, we assume the conditions of Theorem~\ref{theo:Inf} $(B)$ with
$
(Q_1, Q_2) = (xy, 0).
$
The result then follows analogously to the first case.

\qed
\end{proof}


\begin{remark}
A complete classification of the $\mathcal A$-simple singularities of germs $(\mathbb R,0)\to(\mathbb R^2,0)$ can be seen in \cite{BruceGaffiney}. Moreover, item $(iii)$ of Theorem~\ref{cups-apparent} can be refined further. Indeed, if $P_\pi(p)$ is a singularity of Type $6$, $7$, $8$, $9$, or $10$, then the corresponding apparent contour is $\mathcal A$-equivalent to
\[
(t^4,t^5+t^7), \qquad
(t^4,t^5), \qquad
(t^4,t^6+t^7), \qquad
(t^4,t^6+t^9), \qquad
(t^4,t^7),
\]
respectively. The conditions characterizing each of these cases are precisely those obtained in the proof of Theorem~\ref{theo:notInf} $(a_4)$.
\end{remark}

\vspace{0.5cm} 

\hspace{-0,4cm}{\bf Acknowledgement:}
The authors would like to thank Farid Tari for comments. 

\end{document}